\begin{document}
 \newcounter{thlistctr}
 \newenvironment{thlist}{\
 \begin{list}%
 {\alph{thlistctr}}%
 {\setlength{\labelwidth}{2ex}%
 \setlength{\labelsep}{1ex}%
 \setlength{\leftmargin}{6ex}%
 \renewcommand{\makelabel}[1]{\makebox[\labelwidth][r]{\rm (##1)}}%
 \usecounter{thlistctr}}}%
 {\end{list}}

\thispagestyle{empty}

\newtheorem{Lemma}{\bf LEMMA}[section]
\newtheorem{Theorem}[Lemma]{\bf THEOREM}
\newtheorem{Claim}[Lemma]{\bf CLAIM}
\newtheorem{Corollary}[Lemma]{\bf COROLLARY}
\newtheorem{Proposition}[Lemma]{\bf PROPOSITION}
\newtheorem{Example}[Lemma]{\bf EXAMPLE}
\newtheorem{Fact}[Lemma]{\bf FACT}
\newtheorem{definition}[Lemma]{\bf DEFINITION}
\newtheorem{Notation}[Lemma]{\bf NOTATION}
\newtheorem{remark}[Lemma]{\bf REMARK}

\title{Implication Zroupoids and Identities of Associative Type}
             
\author{Juan M. Cornejo\footnote{I wish to dedicate this work to the memory of Oscar Foresi, my second dad.} \ and Hanamantagouda P. Sankappanavar} 

\date{}

\maketitle

\numberwithin{equation}{section}

\thispagestyle{empty}

\begin{abstract}

An algebra $\mathbf A = \langle A, \to, 0 \rangle$, where $\to$ is binary and $0$ is a constant, is called 
	 an {\it implication zroupoid} ($\mathcal I$-zroupoid, for short) if $\mathbf A$ satisfies the identities:	
	 	$(x \to y) \to z \approx [(z' \to x) \to (y \to z)']'$  
	 and   $ 0'' \approx 0$, where $x' : = x \to 0$, and $\mathcal I$ denotes the variety of all  
$\mathcal I$-zroupoids.   An $\mathcal I$-zroupoid is {\it symmetric} if it satisfies $x'' \approx x$ and $(x \to y')' \approx (y \to x')'$.  The variety of symmetric $\mathcal I$-zroupoids is denoted by $\mathcal S$.  	 
An identity $p \approx q$, in the groupoid language $\langle \to \rangle$, is called {\it an identity of associative type of length $3$} if 
 $p$ and $q$ have exactly 3 (distinct) variables, say x,y,z, and 
 are grouped according to one of the 
two ways of grouping:
(1) $\star \to (\star \to \star)$ and
(2)  $(\star \to \star) \to \star$, where $\star$ is a place holder for a variable.
A subvariety of $\mathcal I$ is said to be of {\it associative type of length $3$,} if it is defined, relative to  
$\mathcal I$, by 
a single identity of associative type of length $3$.

In this paper we give a complete analysis of the mutual relationships of all subvarieties of $\mathcal I$ 
of associative type of length $3$.  We prove, in our main theorem, that there are exactly 8 such subvarieties of $\mathcal I$ that are
distinct from each other and describe explicitly the poset formed by them
under inclusion.
As an application of the main theorem,  we  derive that there are three distinct subvarieties of the variety $\mathcal S$, each defined, relative to $\mathcal S$, by a single identity 
of  associative type of length $3$. 
\end{abstract}

\section{\bf Introduction}

In 1934, Bernstein gave a system of axioms for Boolean algebras in \cite{bernstein1934postulates} using implication alone.  
Even though his system was not equational, it is not hard to see that one could easily convert it into an equational one by using an additional constant.  
In 2012, the second author extended this ``modified Bernstein's theorem''
to De Morgan algebras in \cite{sankappanavarMorgan2012} by  
 showing that the variety of De Morgan algebras, 
 is term-equivalent  
 to the variety $\mathcal{DM}$ 
 (defined below)  
whose defining axioms use only an implication $\to$ and a constant $0$.   

The primary role played by the identity (I): $(x \to y) \to z \approx [(z' \to x) \to (y \to z)']'$, where $x' : = x \to 0$, 
in the axiomatization of each of those new varieties 
motivated the second author to study the identity (I) in its own right and led him to 
introduce a new equational class of algebras called ``implication zroupoids'' in \cite {sankappanavarMorgan2012} (also called ``implicator groupoids'' in \cite{cornejo2015implication}).   

	An algebra $\mathbf A = \langle A, \to, 0 \rangle$, where $\to$ is binary and $0$ is a constant, is called a {\rm zroupoid}.
	A zroupoid $\mathbf A = \langle A, \to, 0 \rangle$ is an {\rm implication zroupoid} \rm ($\mathcal I$-zroupoid, for short\rm) if $\mathbf A$ satisfies:
	\begin{itemize}
		\item[\rm{(I)}] 	$(x \to y) \to z \approx [(z' \to x) \to (y \to z)']'$, where $x' : = x \to 0$,
		\item[{\rm (I$_{0}$)}]  $ 0'' \approx 0$.
	\end{itemize}
$\mathcal{I}$ denotes the variety of implication zroupoids.  The varieties $\mathcal{DM}$ 
and $\mathcal{SL}$ 
are defined relative to $\mathcal{I}$, respectively, by the following identities: 
$$
\begin{array}{ll}
{\rm (DM)} & (x \to y) \to x \approx x \mbox{ (De Morgan Algebras)}; \\
{\rm (SL)}  & x' \approx x \mbox{ and } x \to y \approx y \to x \mbox{ (semilattices with the least element $0$)}.  
\end{array}
$$
The variety 
$\mathcal{BA}$ of Boolean algebras 
is defined relative to $\mathcal{DM}$ by the following identity: 
$$
\begin{array}{ll}
{\rm (BA)} & x \to x \approx 0'.\\
\end{array}
$$ 


The variety $\mathcal{I}$ 
 exhibits (see \cite{sankappanavarMorgan2012}) several interesting properties; 
for example, 
the identity $x''' \to y \approx x' \to y$ holds in $\mathcal{I}$; in particular, $x'''' \approx x''$.  

The (still largely unexplored) lattice of subvarieties of $\mathcal{I}$ seems to be fairly complex.  In fact, Problem 6 of \cite{sankappanavarMorgan2012} calls for an investigation of the structure of the lattice of subvarieties of $\mathcal{I}$.

The papers  \cite{cornejo2015implication}, \cite{cornejo2016order}, \cite{cornejo2016semisimple}, \cite{cornejo2016derived}, \cite{cornejo2016BolMoufang}, \cite{cornejo2016weakAssociative} and \cite{cornejo2016varieties} have addressed further the above-mentioned problem, but still partially, by introducing several new subvarieties of $\mathcal{I}$ and investigating relationships among them.  
The (currently known) size of the poset of subvarieties of $\mathcal{I}$ is at least 30; but it is still unknown whether the lattice of subvarieties is finite or infinite.  

The poset of the known varieties that appears in \cite{cornejo2016derived} is given below 
for the reader's convenience (for the definitions of the varieties in the picture, please see \cite{cornejo2016derived}). \\


\hskip 2.5cm
\setlength{\unitlength}{0.5cm}
\begin{picture}(16,30)(0,0)


\put(3,5){\circle*{0.3}}
\put(1,4.7){$\mathcal{BA}$}
\put(1.2,7){\tiny \cite{sankappanavarMorgan2012}}
\put(3,9){\circle*{0.3}}
\put(1,8.7){$\mathcal{KA}$}
\put(1.2,11){\tiny \cite{sankappanavarMorgan2012}}
\put(3,13){\circle*{0.3}}
\put(1,12.7){$\mathcal{DM}$}
\put(0.3,15){\tiny \cite[4.3]{sankappanavarMorgan2012}}
\put(3,17){\circle*{0.3}}
\put(1,16.7){$\mathcal{SCP}$}
\put(0.3,19){\tiny \cite[8.5]{sankappanavarMorgan2012}}
\put(6,20.2){\tiny \cite[8.19]{sankappanavarMorgan2012}}
\put(3,21){\circle*{0.3}}
\put(1,20.7){$\mathcal{CP}$}
\put(0.3,23){\tiny \cite[6.2]{cornejo2015implication}}
\put(4.8,23.7){\tiny \cite[8.10]{sankappanavarMorgan2012}}
\put(3,25){\circle*{0.3}}
 \put(-1.6,24.7){$\mathcal{WCP} = \mathcal{MC}$}

\put(7,1){\circle*{0.3}}
\put(5.5,0.7){$\mathcal{T}$}
\put(7,5){\circle*{0.3}}
\put(5.5,4.7){$\mathcal{SL}$}
\put(7,6.7){\tiny \cite{cornejo2015implication}}
\put(10.9,7.9){\tiny \cite{cornejo2015implication}}

\put(11,5){\circle*{0.3}}
\put(11.7,4.7){$\mathcal{Z}$}
\put(13.5,6.7){\tiny \cite{cornejo2015implication}}
\put(11,9){\circle*{0.3}}
\put(9.3,8.7){$\mathcal{I}_{1,0}$}
\put(8.2,11.7){\tiny \cite[10.3]{cornejo2015implication}}
\put(11,14){\circle*{0.3}}
\put(12.2,14.7){\tiny \cite{cornejo2015implication}} 
\put(11.7,13.7){$\mathcal{ID}$}
\put(11,15.7){\tiny \cite[8.23]{sankappanavarMorgan2012}}
\put(11,17){\circle*{0.3}}
\put(11.2,16.7){$\mathcal{LAP}$}
\put(11,17.7){\tiny \cite[8.19]{sankappanavarMorgan2012}}
\put(11,19){\circle*{0.3}}
\put(11.2,18.7){$\mathcal{I}_{2,0}$}
\put(11,21){\circle*{0.3}}
\put(11.2,20.7){$\mathcal{TII}$}
\put(8.3,22.7){\tiny \cite[8.3]{sankappanavarMorgan2012}}
\put(11,25){\circle*{0.3}}
\put(11.7,24.7){$\mathcal{I}_{3,1}$}
\put(11,29){\circle*{0.3}}
\put(11.7,28.7){$\mathcal{I}$}

\put(15,9){\circle*{0.3}}
\put(15.7,8.7){$\mathcal{C}$}
\put(15.2,10.7){\tiny \ref{subvarieties_011016}}
\put(4.5,19.7){\tiny \ref{subvarieties_011016}}
\put(15,13){\circle*{0.3}}
\put(15.7,12.7){$\mathcal{CLD}$}
\put(15.2,14.7){\tiny (*)}
\put(15,17){\circle*{0.3}}
\put(15.7,16.7){$\mathcal{A}$}
\put(12.3,23){\tiny \cite[6.2]{cornejo2015implication}}
\put(15.2,18.7){\tiny (*)}
\put(15,21){\circle*{0.3}}
\put(15.7,20.7){$\mathcal{SRD}$}
\put(15.2,22.7){\tiny \ref{Theo_SRD_RD}}
\put(15,25){\circle*{0.3}}
\put(15.7,24.7){$\mathcal{RD}$}


\put(3,5){\line(0,1){20}}
\put(7,1){\line(0,1){4}}
\put(11,9){\line(0,1){20}}
\put(15,9){\line(0,1){16}}


\put(3,5){\line(1,-1){4}}
\put(3,17){\line(1,-3){4}}
\put(3,17){\line(2,-3){8}}
\put(11,29){\line(1,-1){4}}
\put(11,25){\line(1,-2){4}}
\put(3,21){\line(1,-1){12}}


\put(3,25){\line(2,1){8}}
\put(3,17){\line(2,1){8}}
\put(3,21){\line(2,1){8}}
\put(3,13){\line(2,1){8}}
\put(7,5){\line(2,1){8}}
\put(7,5){\line(1,1){4}}
\put(11,9){\line(1,2){4}} 
\put(7,1){\line(1,1){8}}		

\end{picture}\\

Two of the subvarieties of $\mathcal{I}$ that are  of interest in this paper are:
 $\mathcal{I}_{2,0}$ and $\mathcal{MC}$ which are defined relative to $\mathcal{I}$, respectively, by the following identities, where $x \land y := (x \to y')' $ :  \\ 
\indent (I$_{2,0}$) \quad  $x'' \approx x$;\\
\indent (MC) \quad  $x \land y \approx y \land x$. 



Motivated by the fact that not all algebras in $\mathcal I$ are associative with respcet to the operation $\to$, the quest for finding more new subvarieties of $\mathcal I$ 
led us naturally to 
consider the question as to whether 
 generalizations 
 of the associative law would yield some new subvarieties of $\mathcal I$ and thereby reveal further insight into the structure of the lattice of subvarieties of $\mathcal I$.  This quest led to the results in  
 \cite{cornejo2016BolMoufang},  \cite{cornejo2016weakAssociative} and this paper, which will show that this is indeed the case.

A look at the associative law would reveal the following characteristics: 
\begin{itemize}
\item[(1)] Length of the left side term = length of the right side term = 3,
\item[(2)] The number of distinct variables on the left = the number of distinct variables on right = the number of occurrences of variables on either side,
\item[(3)] The order of the variables on the left side is the same as the order of the variables on the right side, 
\item[(4)] The bracketings used in the left side term and in the right side term are two different 
brackettings. 
\end{itemize}

One way to generalize the associative law is to relax somewhat the restrictions (1) and (2) by choosing $m$ distinct variables and setting the length of the left term = that of right term = $n$, with $n \geq m$,
and keeping (3) and (4).  But then, for $n \geq 4$, there will be more than two possible bracketings.  So, we order all possible bracketings and assign a number to each, called bracketing number.  Such identities are called ``weak associative identities of length $n$''. 
For a precise definition and notation of weak associative identities, we refer the reader to   \cite{cornejo2016weakAssociative} and the references therein.

A second way to generalize the associative law is to relax (3) and to keep (2),  (4) and the first half of (1). 
So, we consider the laws of the form $p \approx q$ of length $n$ such that (a) each of $p$ and $q$ contains the $n$ (an integer $\geq 3$) distinct variables, say, $x_1, x_2, \dots, x_n$, (b) $p$ and $q$ are terms obtained by distinct bracketings of  permutations of the $n$ variables.  Let us call such laws as ``identities of associative type of length $n$''.  


%

A third way to generalize the associative law is to relax all of four features mentioned above by allowing number of occurrences of variables on one side be different from the number on the other side.  Let us refer to these as ``identities of mixed type''.

Specific instances of all such generalizations of the associative law, 
have occurred in the literature at least since late 19th century.  We mention below a few such instances. 

 Weak associative identities 
of length $4$ with $3$ distinct variables, called ``identities of Bol-Moufang type'', have been investigated in the literature quite extensively  
for the varieties of quasigroups and loops.  In fact, the first systematic analysis of the relationships among the identities of Bol-Moufang type appears to be in \cite{Fe69} in the context of loops.  For more information about these identities in the context of quasigroups and loops, see \cite{Fe69}, \cite{Ku96}, \cite{PV05a}, \cite{PV05b}) and the references therein. 


 More recently, in \cite{cornejo2016BolMoufang} and \cite{cornejo2016weakAssociative}, we made  a complete analysis of relationships among weak associative identities of length $\leq 4$, relative to the variety  
$\mathcal S$ of symmetric I-zroupoids (i.e., satisfying $x'' \approx x$ and $(x \to y')' \approx (y \to x')'$). 
 We have shown 
that 
 $6$
 of the 155 subvarieties of $\mathcal S$ defined by a single weak associative law of length $m \leq 4$ (including the Bol-Moufang type) are distinct.
Furthermore, we describe explicitly by a Hasse diagram the poset formed by them,  together with the varieties $\mathcal{BA}$ and $\mathcal{SL}$. 

 We should mention here that such an analysis of weak associative laws of lenth $\leq 4$ relative to the variety $\mathcal I$ is still open.

 The identities of associative type have also appeared in the literature.
For example, 
\begin{itemize}
\item The identity $x \cdot (y \cdot z) \approx (z \cdot x) \cdot y$ 
was considered in  \cite{Su1929}  by Suschkewitsch (see also \cite[Theorem 11.5]{St1957}). 
\item Abbot uses the identity 
 $x \cdot (y \cdot z)\approx y \cdot (x \cdot z)$ as one of the defining identities in his definition of implication algebras. 
\item  
The identities $x \cdot(y \cdot z) \approx z \cdot (y \cdot x)$, $x \cdot(y \cdot z) \approx y \cdot (x \cdot z)$, and $x \cdot(y \cdot z) \approx (z \cdot x) \cdot y$  were investigated for quasigroups by Hossuz\'{u} in \cite{Ho54}. 
\item The identity $x \cdot(z \cdot y) \approx (x \cdot y) \cdot z$ is investigated by Pushkashu in
\cite{Pu10}. 
\item The identities $x \cdot(z \cdot y) \approx (x \cdot y) \cdot z$ and $x \cdot(y \cdot z) \approx z \cdot (y \cdot x)$ have appeared in \cite{KaNa72} of  Kazim and Naseeruddin. 
\end{itemize}





The identities of mixed type have also been considered in the literature.  A few are listed below:
\begin{itemize}
\item  left distributivity: $x \cdot (y \cdot z) \approx  (x \cdot y) \cdot (x \cdot z)$, 
 appears already in the late 19th century publications of logicians Peirce and Schroeder (see \cite{Pe1880} and \cite{Sch1887},  respectively). 
\item right distributive:  $(z \cdot y) \cdot x \approx (z \cdot x) \cdot (y \cdot x)$,
\item distributive if it is both left and right distributive,
\item  medial: $(x \cdot y) \cdot (u \cdot v) \approx (x \cdot u) \cdot (y \cdot v)$,
\item  idempotent:  $x \cdot  x \approx x$,
\item  left involutory (or left symmetric): $x \cdot (x \cdot y) \approx y$,
\end{itemize}




Several identities of associative type have appeared in the literatiure on groupoids as well.  For instance,
\begin{itemize}
\item  $(x \cdot y) \cdot z \approx (z \cdot y) \cdot x$: Abel-Grassmann's groupoid (AG-groupoid),
\item  $(x \cdot y) \cdot z \approx (z \cdot y) \cdot x$ and $x \cdot (y \cdot z) \approx y \cdot  (x \cdot z)$  (AG$^{**}$-groupoid),  
\item  $x \cdot (z \cdot y) \approx (x \cdot y) \cdot z$: Hossuzu-Tarski identity  (see \cite{Pu10}),
\item   $(x \cdot y) \cdot z \approx (z \cdot y) \cdot x$: Left almost semigroup (LA-semigroup) (see \cite{Pu10}),
\item   $x \cdot (y \cdot z) \approx z \cdot (y \cdot x)$: Right almost semigroup (RA-semigroup) (see \cite{Pu10}).
\end{itemize}

Similar to the problem mentioned in \cite{cornejo2016weakAssociative} for weak associative identities, the following general problem presents itself naturally if we restrict our attention to identities of associative type.\\

\noindent PROBLEM:  Let $\mathcal V$ be a given variety of algebras (whose language includes a   binary operation symbol, say, $`\to'$).  
Investigate the mutual relationships among the subvarieties of $\mathcal V$, each of which is  defined by a single identity of  
 associative type of length n, for small values of the positive integer $n$. \\

 We will now consider the above problem for the variety $\mathcal I$. We begin a systematic analysis of the relationships among the identities of associative type of length 3 relative to the variety $\mathcal I$.

\begin{definition}
An identity $p \approx q$, in the groupoid language $\langle \to \rangle$, is called {\it an identity of associative type of length $3$} if 
 $p$ and $q$ have exactly $3$ (distinct) variables, say x,y,z, and these variables are grouped according to one of the following 
two ways of grouping:
\begin{multicols}{2}
\begin{enumerate}[{\rm (a)}]
	\item $o \to (o \to o)$
	\item $(o \to o) \to o.$
\end{enumerate}
\end{multicols}
\end{definition}

In the rest of the paper, we refer to an ``identity of associative type of length $3$'' as simply 
an ``identity of associative type''.

We wish to determine the mutual relationships of all the subvarieties of $\mathcal I$ defined by 
the identities of associative type, which will be referred to as ``subvarieties of associative type''.  

 Our main theorem says that there are 8 of such subvarieties  of $\mathcal{I}$ that are
	distinct from each other and describes explicitly, by a Hasse diagram, the poset formed by them, together with the varieties $\mathcal{SL}$ and $\mathcal{BA}$. As an application,  we show that there are $3$ distinct subvarieties of $\mathcal S$ of associative type.


We would like to acknowledge that the software ``Prover 9/Mace 4'' developed by McCune \cite{Mc} has been useful to us in some of our findings presented in this paper.  We have used it to find examples and to check some conjectures.\\

\section{Preliminaries}

\indent We refer the reader to the standard references
\cite {balbesDistributive1974}, \cite{burrisCourse1981} and \cite{rasiowa1974algebraic} for concepts and results used, but not explained, in this paper.

Recall from \cite{sankappanavarMorgan2012} that $\mathcal{SL}$ is the variety of semilattices with a least element $0$. It was shown in \cite{cornejo2015implication} that $\mathcal{SL}   
= \mathcal{C} \cap \mathcal{I}_{1,0} $,  where $\mathcal{I}_{1,0}$ is defined by $x' \approx x$ relative to $\mathcal{I}$.

The two-element algebras $\mathbf{2_z}$, $\mathbf{2_s}$, $\mathbf{2_b}$ were introduced in \cite{sankappanavarMorgan2012}.  Their  
operations $\to$ are respectively as follows:  \\

\begin{minipage}{0.3 \textwidth}
	\begin{tabular}{r|rr}
		$\to$: & 0 & 1\\
		\hline
		0 & 0 & 0 \\
		1 & 0 & 0
	\end{tabular}  
	
\end{minipage}  
\begin{minipage}{0.3 \textwidth}
	\begin{tabular}{r|rr}
		$\to$: & 0 & 1\\
		\hline
		0 & 0 & 1 \\
		1 & 1 & 1
	\end{tabular} 
	
\end{minipage}
\begin{minipage}{0.3 \textwidth}
	\begin{tabular}{r|rr}
		$\to$: & 0 & 1\\
		\hline
		0 & 1 & 1 \\
		1 & 0 & 1
	\end{tabular}   
\end{minipage}           
\\ \\

Recall that $\mathcal{V}(\mathbf{2_b}) = \mathcal{BA}$.
Recall also from \cite[Corollary 10.4]{cornejo2015implication} \label{CorSL} that
$\mathcal{V}(\mathbf{2_s}) = \mathcal{SL}$.  The following lemma easily follows from the definition of $\land$ given earlier in the introduction.

\begin{Lemma} {\bf \cite[7.16]{sankappanavarMorgan2012}} \label{Lem_200317_02}
	Let $\mathbf A$ be an  $\mathcal{I}$-zroupoid. Then $\mathbf A \models x''' \to y \approx x' \to y$.
\end{Lemma}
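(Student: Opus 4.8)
The plan is to derive the identity purely equationally from (I) and (I$_0$), by specializing (I) at $0$ to harvest a handful of auxiliary identities and then peeling the outer operations off both sides of the target $x''' \to y \approx x' \to y$.

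First I would collect the specializations of (I) obtained by substituting $0$ for individual variables (recall $x' := x \to 0$). Setting $y := 0$ gives the \emph{normal form}
\[ x' \to z \approx [(z' \to x) \to (0 \to z)']', \]
setting $z := 0$ gives $(x \to y)' \approx [(0' \to x) \to y'']'$, and setting $y := z := 0$ gives the key \emph{absorption identity} $(0' \to x)'' \approx x''$. I would also record that (I$_0$), i.e.\ $0'' \approx 0$, forces $0''' \approx 0'$, so that on the subalgebra generated by $0$ the unary operation $'$ cycles with period $2$ ($0^{(n)}$ equals $0$ for even $n$ and $0'$ for odd $n$).

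The reduction then proceeds as follows. Applying the normal form with $z := y$ expresses $x' \to y$, and applying it after the substitution $x \mapsto x''$ expresses $x''' \to y$; hence the target reduces to
\[ (y' \to x) \to (0 \to y)' \approx (y' \to x'') \to (0 \to y)'. \]
Applying (I) to both sides, treating $(0 \to y)'$ as the rightmost factor, cancels the common outer part and leaves $(x \to (0 \to y)')' \approx (x'' \to (0 \to y)')'$; applying the $z := 0$ identity once more peels this down to an identity comparing $0' \to x$ with $0' \to x''$ in the position of a left argument of $\to$. Thus the whole statement reduces to showing that $x$ and $x''$ are interchangeable as a left argument, \emph{after} one application of $0' \to (\cdot)$.

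The main obstacle is precisely making this reduction terminate. A naive continuation is self-referential: each further application of (I) (and of the $z := 0$ identity) trades the ``$x$ versus $x''$'' discrepancy for the same discrepancy wrapped one level deeper, with the second factor replaced by its double prime, so the peeling cycles rather than closing. Breaking the cycle is where the absorption identity $(0' \to x)'' \approx x''$ and the period-$2$ behaviour of $'$ on $0$ must be combined: the plan is to prove the terminating sub-identity $0' \to x \approx 0' \to x''$ (equivalently, to establish $x'''' \approx x''$ first and use it to collapse the wrapped discrepancy), after which both sides of the reduced equation coincide termwise and the target follows by retracing the substitutions. I expect the discovery of the exact terminating sub-identity, and the verification that it really closes the loop, to be the delicate part; the remainder is routine bookkeeping of substitutions into (I).
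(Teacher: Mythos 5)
Your reductions are sound as far as they go: the three specializations of (I) you record are correct, and your repeated applications of (I) do legitimately reduce the target to the sufficient condition $0' \to x \approx 0' \to x''$. The problem is that the proposal stops exactly where the content of the lemma lives. The identity $0' \to x \approx 0' \to x''$ is not residual bookkeeping; it is essentially the theorem in disguise. Your proposed fallback, ``establish $x'''' \approx x''$ first and use it,'' is itself precisely the instance $y := 0$ of the target identity $x''' \to y \approx x' \to y$ (since $x''' \to 0 = x''''$ and $x' \to 0 = x''$), so invoking it without an independent derivation is circular. Moreover, even if $x'''' \approx x''$ were granted, your loop still does not close: the absorption identity gives only $(0' \to x)'' \approx x'' \approx x'''' \approx (0' \to x'')''$, i.e.\ agreement of the \emph{double primes}, and to strip those primes you would additionally need $0' \to u \approx (0' \to u)''$ — an instance of the double-negation stability $(x \to y) \to z \approx [(x \to y) \to z]''$ that the paper quotes as a separate nontrivial lemma from earlier work. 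None of these ingredients is derived in the proposal, so what you have is a correct reduction of the lemma to an equally hard unproved sub-lemma, not a proof.

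For comparison, the paper offers no argument here at all: the statement is imported verbatim as Lemma 7.16 of the 2012 De Morgan algebras paper, where it sits at the end of a long chain of auxiliary consequences of (I) and (I$_0$). Your peeling strategy is a reasonable way to organize such a chain, but the ``delicate part'' you defer is not a detail to be filled in later — it is the entire difficulty, and a complete write-up would have to supply an explicit equational derivation of $0' \to x \approx 0' \to x''$ (or of the double-negation stability of terms of the form $(u \to v) \to w$) from (I) and $0'' \approx 0$ alone.
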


\begin{Lemma} \label{lemaOfIZI} {\bf \cite[3.4]{cornejo2015implication}}  
	Let $\mathbf A$ be an I-zroupoid.  Then $\mathbf A$ satisfies:
	\begin{enumerate}[{\rm (a)}]
		\item $(x \to y) \to z \approx [(x \to y) \to z]'' $, \label{lemaOfIZI3_4}
		\item $(x \to y)' \approx (x'' \to y)' $. \label{lemaOfIZI2_4}
	\end{enumerate}
\end{Lemma}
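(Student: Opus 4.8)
The plan is to derive both identities purely equationally from the axioms (I) and (I$_0$) together with Lemma~\ref{Lem_200317_02}. The single most useful consequence will be the identity $u'''' \approx u''$, obtained from Lemma~\ref{Lem_200317_02} by setting $y := 0$; it says that iterating priming is eventually $2$-periodic, so that every \emph{double-primed} term is regular: $(r'')'' = r'''' = r''$.

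For part (a) the key structural observation is that the right-hand side of (I) is an outermost-primed term: (I) rewrites $(x \to y) \to z$ as $p'$, where $p = (z' \to x) \to (y \to z)'$. The point is that $p$ is again of the ``double product'' shape $(\,\cdot \to \cdot\,) \to (\,\cdot\,)$, so (I) applies to $p$ as well (with $a = z'$, $b = x$, $c = (y \to z)'$) and rewrites it as $r'$ for some term $r$ whose explicit form is irrelevant. Composing the two applications gives $(x \to y) \to z = p' = r''$. Thus every term of the form $(x \to y) \to z$ is a double-primed term, whence $\big((x \to y) \to z\big)'' = r'''' = r'' = (x \to y) \to z$, the middle equality being the consequence $u'''' \approx u''$ of Lemma~\ref{Lem_200317_02}. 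This settles (a) completely.

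For part (b) I would start from the instance of (I) with $z := 0$, which (using $(y \to 0)' = y''$) yields
\[
(x \to y)' = \big[(0' \to x) \to y''\big]' \quad\text{and}\quad (x'' \to y)' = \big[(0' \to x'') \to y''\big]',
\]
so that (b) is reduced to showing the two bracketed primed terms coincide, i.e.\ to collapsing the discrepancy between $0' \to x$ and $0' \to x''$ in the left argument. Applying (I) once more to each inner product and invoking Lemma~\ref{Lem_200317_02} in the form $y''' \to 0' \approx y' \to 0'$, together with the regularity established in part (a), rewrites both sides in a common normal form, after which the remaining task is to absorb the extra double prime on $x$.

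I expect this last absorption to be the main obstacle: the naive reductions turn out to be self-referential, because the $x$-versus-$x''$ discrepancy is invariant under the moves coming from (I), from Lemma~\ref{Lem_200317_02} (which governs only odd prime-parities in a left argument), and from the $2$-periodicity $u'''' \approx u''$ (which only cycles primes with period two). Breaking this symmetry requires either a more delicate substitution into (I) that lands the doubly primed variable in a left-argument position where Lemma~\ref{Lem_200317_02} can act, or one of the auxiliary prime-calculus identities of \cite{cornejo2015implication} established prior to this lemma; I would look there to close the gap.
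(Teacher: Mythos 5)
This lemma is not proved in the paper at all: it is quoted from \cite[Lemma 3.4]{cornejo2015implication}, so there is no in-paper argument to compare yours against and your proposal must stand on its own. Your proof of part (a) does: two successive applications of (I) exhibit $(x \to y) \to z$ as $r''$ for an explicit term $r$, and the consequence $u'''' \approx u''$ of Lemma \ref{Lem_200317_02} (take $y := 0$ there) then gives $[(x \to y) \to z]'' = r'''' = r'' = (x \to y) \to z$. That is a clean, correct, self-contained derivation of (a).

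Part (b), however, is a genuine gap, and you say so yourself. The reduction of $(x \to y)' \approx (x'' \to y)'$ to $(0' \to x) \to y'' \approx (0' \to x'') \to y''$ via (I) with $z := 0$ is sound, but every further move you list reproduces the same $x$-versus-$x''$ discrepancy in a new position: applying (I) to $(0' \to x) \to y''$ leaves you comparing $(x \to y'')'$ with $(x'' \to y'')'$, which is again an instance of (b); Lemma \ref{Lem_200317_02} only collapses odd prime-parities in a left argument ($x''' \to y \approx x' \to y$) and cannot bridge the even jump from $x$ to $x''$; and $u'''' \approx u''$ acts only on terms already carrying two primes. So the ``absorption'' you flag as the main obstacle is exactly the content of identity (b), and deferring it to ``a more delicate substitution'' or to unspecified auxiliary identities of \cite{cornejo2015implication} is not a proof --- in that source the relevant prime-calculus identities are themselves established by a nontrivial chain of computations from (I), not read off from the axiom in one or two steps. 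As written, part (b) remains unproven.
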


\begin{Lemma} {\bf \cite[8.15]{sankappanavarMorgan2012}} \label{general_properties_equiv}
	Let $\mathbf A$ be an  $\mathcal{I}$-zroupoid. Then the following are equivalent:
	\begin{enumerate}
		\item $0' \to x \approx x$, \label{TXX} 
		\item $x'' \approx x$,
		\item $(x \to x')' \approx x$, \label{reflexivity}
		\item $x' \to x \approx x$. \label{LeftImplicationwithtilde}
	\end{enumerate}
\end{Lemma}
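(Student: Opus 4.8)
The plan is to prove the four-way equivalence by first establishing two ``reduction identities'' that hold throughout $\mathcal I$ and then reading off all but one of the required implications almost mechanically. The first reduction identity is $0' \to x \approx x''$. To obtain it, I would note that $0' = 0 \to 0$, so $0' \to x = (0 \to 0) \to x$ is a term of the form $(\star \to \star) \to \star$ and hence, by part (a) of Lemma \ref{lemaOfIZI}, is ``stable'', i.e. $0' \to x \approx (0' \to x)''$. Instantiating (I) at $y := 0$, $z := 0$ gives $x'' = (x \to 0) \to 0 \approx [(0' \to x) \to (0\to 0)']'$, and since $(0 \to 0)' = 0'' = 0$ by (I$_0$), the right-hand side collapses to $(0' \to x)''$; combining, $0' \to x \approx (0'\to x)'' \approx x''$. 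The second reduction identity, $(x \to y)' \approx (x'' \to y'')'$, follows by instantiating (I) at $z := 0$: the left-hand side becomes $(x\to y)'$, while the right-hand side is $[(0' \to x) \to (y\to 0)']'$, which simplifies to $(x'' \to y'')'$ using $0'\to x \approx x''$ and $(y\to 0)' = y''$.

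With these in hand, (1) $\Leftrightarrow$ (2) is immediate, since $0'\to x \approx x''$ turns ``$0'\to x \approx x$'' into ``$x'' \approx x$'' and conversely. For (3) $\Rightarrow$ (2) I would use the trick of evaluating the hypothesis at $x''$: the second reduction identity with $y := x'$ gives $(x\to x')' \approx (x'' \to x''')'$, and since (3) reads $(t\to t')'\approx t$, applying it both at $t := x$ and at $t := x''$ yields $x = (x\to x')' = (x''\to x''')' = x''$. The same pattern handles (4) $\Rightarrow$ (2): first I would prove the auxiliary identity $x'\to x \approx x' \to x''$ (both sides are stable by part (a) of Lemma \ref{lemaOfIZI}, and their primes agree by the second reduction identity together with $x'''' \approx x''$, the latter being Lemma \ref{Lem_200317_02} at $y:=0$), and then combine it with Lemma \ref{Lem_200317_02} (at $y := x''$) to get $x'\to x \approx x'\to x'' \approx x''' \to x'' = (x'')'\to x''$; evaluating (4) at $x$ and at $x''$ then forces $x = x''$. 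Finally, (3) $\Leftrightarrow$ (4) is cheap once we know each of them entails involutivity: under $x'' \approx x$, hypothesis (3) is equivalent to $x \to x' \approx x'$ and hypothesis (4) is just $x'\to x \approx x$, and each of these is obtained from the other simply by substituting $x \mapsto x'$ and using $x'' \approx x$.

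This leaves a single genuine implication, say (2) $\Rightarrow$ (4) (equivalently (2) $\Rightarrow$ (3)), which I expect to be the main obstacle. The difficulty is precisely that, once $x'' \approx x$ is assumed, every tool I have trivializes: Lemmas \ref{Lem_200317_02} and \ref{lemaOfIZI} and both reduction identities become tautologies, since all double primes collapse, and the target $x'\to x \approx x$ is a fixed point of all the obvious rewrites. So the proof of this last implication has to be carried out inside the involutive variety $\mathcal{I}_{2,0}$ using the full strength of (I) and (I$_0$) alone. The plan here is to specialize (I) at the ``cut'' variable equal to $0$ so as to manufacture relations among the three terms $x \to x'$, $x' \to x$ and $0 \to x$, and then eliminate the auxiliary terms. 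The delicate point is that $0 \to x$ is \emph{not} determined by involutivity (it equals $x$ in $\mathbf{2_s}$ but $0'$ in $\mathbf{2_b}$, both of which satisfy $x''\approx x$), so the elimination cannot proceed by computing $0\to x$ outright; instead one must find a chain of substitutions into (I) in which the contributions of $0\to x$ cancel. I expect assembling exactly this chain to be the crux of the argument, after which the equivalence of all four conditions closes up via the cycle (2) $\Rightarrow$ (4) $\Rightarrow$ (3) $\Rightarrow$ (2) together with (1) $\Leftrightarrow$ (2).
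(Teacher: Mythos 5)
There is a genuine gap, and you have in effect flagged it yourself: the implication $(2)\Rightarrow(3)$ (equivalently $(2)\Rightarrow(4)$) is never proved. Everything you do carry out is correct and, as far as I can check, sound: the two reduction identities $0'\to x\approx x''$ and $(x\to y)'\approx(x''\to y'')'$ do follow from (I), (I$_0$) and Lemma \ref{lemaOfIZI}(a) exactly as you describe; $(1)\Leftrightarrow(2)$ is then immediate; the ``evaluate the hypothesis at $x$ and at $x''$'' trick correctly yields $(3)\Rightarrow(2)$ and (via the auxiliary identity $x'\to x\approx x'\to x''$, whose stability-plus-equal-primes argument is fine) $(4)\Rightarrow(2)$; and $(3)\Leftrightarrow(4)$ under involutivity is a routine substitution. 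But what this package delivers is only: $(1)\Leftrightarrow(2)$, $(3)\Leftrightarrow(4)$, and $(3)\vee(4)\Rightarrow(2)$. Without the converse direction from $(2)$ back to $(3)$ or $(4)$, the four conditions are partitioned into two blocks that are not linked, so the stated equivalence is not established. Your closing paragraph describes only a \emph{strategy} for this step (specialize (I) with the cut variable $0$, generate relations among $x\to x'$, $x'\to x$ and $0\to x$, and cancel the $0\to x$ contributions), and correctly identifies why it is delicate ($0\to x$ is not determined by $x''\approx x$, as $\mathbf{2_s}$ and $\mathbf{2_b}$ show), but a strategy is not a proof; the chain of substitutions that makes the cancellation work is precisely the content that is missing.

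For what it is worth, the paper itself does not prove this lemma either: it imports it verbatim from \cite[Theorem 8.15]{sankappanavarMorgan2012}, so there is no in-paper argument to compare against. Your partial derivation is a nice self-contained reconstruction of the ``easy'' three-quarters of that cited result from (I), (I$_0$), Lemma \ref{Lem_200317_02} and Lemma \ref{lemaOfIZI} alone, but to claim the lemma you must either complete the implication $(2)\Rightarrow(4)$ (e.g.\ by actually exhibiting the chain of instances of (I) you allude to) or fall back on the citation for that direction.
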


Recall that $\mathcal{I}_{2,0}$ and $\mathcal{MC}$ are the subvarieties of $\mathcal I$, defined, respectively,  by the equations
\begin{equation} \label{eq_I20} \tag{$I_{2,0}$}
x'' \approx x.
\end{equation}
\begin{equation} \label{eq_MC} \tag{MC}
x \wedge y \approx y \wedge x.
\end{equation}

\begin{Lemma}{\bf \cite{sankappanavarMorgan2012}} \label{general_properties}
	Let $\mathbf A \in  \mathcal{I}_{2,0}$. Then  
	\begin{enumerate}
		\item $x' \to 0' \approx 0 \to x$, \label{cuasiConmutativeOfImplic2}
		\item $0 \to x' \approx x \to 0'$. \label{cuasiConmutativeOfImplic}
	\end{enumerate}
\end{Lemma}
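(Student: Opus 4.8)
The plan is to establish part (1) by a single, carefully chosen application of the defining identity (I), and then to deduce part (2) from part (1) by a substitution. Throughout I will use that, since $\mathbf A \in \mathcal I_{2,0}$, condition (2) of Lemma \ref{general_properties_equiv} holds, so all four equivalent conditions listed there are available to me; in particular I will lean on reflexivity, $(x \to x')' \approx x$, and on the defining law $x'' \approx x$.

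First I would prove (1). I would read the left-hand side as $x' \to 0' = (x \to 0) \to 0'$ and apply (I) with the instantiation $y := 0$ and $z := 0'$, obtaining $(x \to 0) \to 0' \approx [\,(0'' \to x) \to (0 \to 0')'\,]'$. I then simplify the right-hand side using the basic facts: by (I$_0$) we have $0'' \approx 0$, so the left inner factor collapses to $0 \to x$; and by reflexivity evaluated at $0$ we have $(0 \to 0')' \approx 0$, so the right inner factor collapses to $0$. This reduces the expression to $[(0 \to x) \to 0]'$. Since $w \to 0$ is by definition $w'$, this equals $(0 \to x)''$, and one final use of $x'' \approx x$ gives $(0 \to x)'' \approx 0 \to x$. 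Chaining these equalities yields $x' \to 0' \approx 0 \to x$.

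To obtain (2) I would substitute $x'$ for $x$ in the identity just proved in part (1): this gives $x'' \to 0' \approx 0 \to x'$. Applying $x'' \approx x$ to the left-hand side turns it into $x \to 0'$, so $x \to 0' \approx 0 \to x'$, which is precisely the asserted identity $0 \to x' \approx x \to 0'$.

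The only real decision point, and hence the main obstacle, is selecting the instantiation of the three variables in (I) for part (1). The key insight is that taking $z := 0'$ forces $z'$ to collapse to $0$ via (I$_0$), while the residual term $(0 \to 0')'$ is exactly the instance of reflexivity at $0$; with those two simplifications the right-hand side telescopes down to $0 \to x$ after a single appeal to involutivity. Once part (1) is in hand, part (2) is purely formal.
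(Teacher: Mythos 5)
Your proof is correct. The paper itself gives no argument for this lemma --- it is simply quoted from \cite{sankappanavarMorgan2012} --- so there is no in-paper proof to compare against; but your derivation is a valid, self-contained one. The single instantiation of (I) with $y:=0$, $z:=0'$ does exactly what you claim: $(x\to 0)\to 0' \approx [(0''\to x)\to(0\to 0')']'$, the factor $0''\to x$ collapses to $0\to x$ by (I$_0$), the factor $(0\to 0')'$ collapses to $0$ by condition (3) of Lemma \ref{general_properties_equiv} (available since $x''\approx x$ holds in $\mathcal I_{2,0}$), and the remaining $[(0\to x)\to 0]' = (0\to x)''$ reduces to $0\to x$ by involution. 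The passage from (1) to (2) by substituting $x'$ for $x$ and applying $x''\approx x$ is likewise sound.
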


\begin{Lemma} \label{general_properties3} \label{general_properties2}
	Let $\mathbf A \in \mathcal{I}_{2,0}$. Then $\mathbf A$ satisfies:
		\begin{enumerate}[{\rm (a)}]
			\item $(x \to 0') \to y \approx (x \to y') \to y$, \label{281014_05}
			\item $(y \to x) \to y \approx (0 \to x) \to y$, \label{291014_10}
			\item $0 \to x \approx 0 \to (0 \to x)$, \label{311014_03}
			\item $(0 \to x) \to (0 \to y) \approx x \to (0 \to y)$, \label{311014_06}
			\item $x \to y \approx x \to (x \to y)$, \label{031114_04} 
			\item $0 \to (x \to y) \approx x \to (0 \to y)$, \label{071114_04}
			\item $0 \to (x \to y')' \approx 0 \to (x' \to y)$, \label{191114_05}
			\item $x \to (y \to x') \approx y \to x'$, \label{281114_01}
		\end{enumerate}
\end{Lemma}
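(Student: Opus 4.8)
The plan is to treat all eight identities as equational consequences of the single axiom (I), the involution law $x'' \approx x$ defining $\mathcal{I}_{2,0}$, and the basic facts already collected in Lemmas~\ref{Lem_200317_02}--\ref{general_properties}. The workhorse is (I) read as a rewriting rule $(a \to b) \to c \approx [(c' \to a) \to (b \to c)']'$, together with the following simplifications valid throughout $\mathcal{I}_{2,0}$: the absorption facts $0' \to x \approx x$, $x' \to x \approx x$ and $(x \to x')' \approx x$ from Lemma~\ref{general_properties_equiv}, the identity $z \to 0 = z'$ with $z'' \approx z$, and the quasi-commutativity relations $x' \to 0' \approx 0 \to x$ and $0 \to x' \approx x \to 0'$ from Lemma~\ref{general_properties}. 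The strategy for each item is to apply (I) to one or both sides with a carefully chosen substitution, collapse the resulting terms using these simplifications, and reuse earlier items of the lemma as they become available.

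First I would dispose of (a) and (b), which are the clean base cases and which illustrate the whole method: both sides of each reduce, under (I), to a common normal form. For (a), applying (I) with $c = y$ to $(x \to 0') \to y$ (so $a = x$, $b = 0'$) gives $[(y' \to x) \to (0' \to y)']'$, and $0' \to y \approx y$ turns this into $[(y' \to x) \to y']'$; applying (I) with $c = y$ to $(x \to y') \to y$ (so $b = y'$) gives $[(y' \to x) \to (y' \to y)']'$, and $y' \to y \approx y$ turns this into the same term $[(y' \to x) \to y']'$. For (b), applying (I) with $c = y$ to $(y \to x) \to y$ yields $[(y' \to y) \to (x \to y)']' \approx [y \to (x \to y)']'$, while applying it to $(0 \to x) \to y$ yields $[(y' \to 0) \to (x \to y)']' \approx [y'' \to (x \to y)']' \approx [y \to (x \to y)']'$; the two sides again coincide.

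Next I would derive (c)--(h). Here it is convenient to read the bracketed outputs of (I) through the meet $x \land y := (x \to y')'$, since many of the normal forms produced by (I) have exactly the shape $(u \to v')'$. I expect (c) to come for free once (e) is in hand, as (c) is precisely the instance $x := 0$ of $x \to y \approx x \to (x \to y)$; conversely (c) can be proved first by the quasi-commutativity route $0 \to (0 \to x) \approx (0 \to x)' \to 0'$ and fed into the others. For (d) I would expand $(0 \to x) \to (0 \to y)$ by (I) (using $(0 \to y)' \to 0 \approx 0 \to y$) into $[(0 \to y) \to (x \to (0 \to y))']'$ and then recognize the right-hand side $x \to (0 \to y)$ inside it, reducing (d) to an absorption of the form $a \to (x \to a)' \approx (x \to a)'$ with $a = 0 \to y$. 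Items (e), (f), and (h) would then be obtained by combining (d) with the quasi-commutativity identities and the already-proved (a), (b).

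The hard part will be the items in which primes are nested, chiefly (g), namely $0 \to (x \to y')' \approx 0 \to (x' \to y)$, and the absorption-type identities (f) and (h). For these the two sides are not directly of the form $(a \to b) \to c$, so each must first be rewritten (via $0 \to z \approx z' \to 0'$ or $0 \to z' \approx z \to 0'$) into a form on which (I) can act, and only after one or two applications of (I) do the two sides meet. The genuine obstacle is bookkeeping: keeping track of which subterms carry primes, repeatedly invoking $x'' \approx x$ and Lemma~\ref{lemaOfIZI} to cancel double primes, and making sure the reductions used are exactly those licensed by Lemma~\ref{general_properties_equiv} for the involutive case. Once the normal forms are lined up correctly, each identity closes; the whole lemma is thus a sequence of such reductions, with (g) requiring the most care.
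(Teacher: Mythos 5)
Your derivations of items (a) and (b) are correct and complete: in each case a single application of (I) with $c=y$, followed by the absorptions $0'\to y\approx y$ and $y'\to y\approx y$ from Lemma \ref{general_properties_equiv} and by $y''\approx y$, collapses both sides to the common normal form $[(y'\to x)\to y']'$ (resp.\ $[y\to(x\to y)']'$). For what it is worth, the paper itself offers no derivation to compare against: its ``proof'' of this lemma is a citation of \cite{cornejo2015implication} for items (a), (b), (c), (f), (g), (h) and of \cite{cornejo2016order} for (d) and (e). So any honest derivation from (I) and $x''\approx x$ is necessarily ``a different route,'' and for (a) and (b) yours is exactly the kind of (I)-rewriting those sources use.

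The gap is that items (c)--(h) are not proved; they are only planned, and at least one of the plans rests on an unestablished auxiliary identity. Concretely, your reduction of (d) expands $(0\to x)\to(0\to y)$ by (I) to $[(0\to y)\to(x\to(0\to y))']'$ and then appeals to an absorption of the form $a\to(x\to a)'\approx(x\to a)'$. This is \emph{not} item (h): in (h) the prime sits on the inner variable ($x\to(y\to x')\approx y\to x'$), whereas here it sits on the whole subterm $(x\to a)$, and the two coincide only if one already knows something like $(x\to a)'\approx x\to a'$, which is not an identity of $\mathcal I_{2,0}$. So (d) is reduced to a claim that is itself of the same difficulty as the items being proved. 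Similarly, your two proposed routes to (c) either presuppose (e) (which you then propose to derive from (d)) or stop at $0\to(0\to x)\approx(0\to x)'\to 0'$ via Lemma \ref{general_properties}, which still leaves $(0\to x)'\to 0'\approx 0\to x$ unproved. Finally, (f), (g) and (h) --- which you correctly identify as the hard cases --- are left entirely to ``bookkeeping''; each of them requires a genuine multi-step derivation (in \cite{cornejo2015implication} they are separate lemmas with nontrivial proofs), and nothing in your sketch guarantees that the normal forms ``line up.'' As it stands, the proposal proves two of the eight identities and defers the rest.
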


\begin{proof}

	For items (\ref{281014_05}),
	 (\ref{291014_10}), (\ref{311014_03}), 
	 (\ref{071114_04}), 
	 (\ref{191114_05}), and   
	 (\ref{281114_01})
	 we refer the reader to \cite{cornejo2015implication}.
	The proof of items 
	(\ref{311014_06}) and (\ref{031114_04})
	are in \cite{cornejo2016order}.
\end{proof}

\begin{Theorem}{\bf \cite{cornejo2016derived}}  \label{Transfer_Theo}
	
	Let $t_i(\overline x), i= 1, \cdots, 6$
	be terms and $\mathcal V$ a subvariety of $\mathcal I$.   
	If 
	$$\mathcal V  \cap \mathcal I_{2,0} \models [t_1(\overline x) \to t_2(\overline x)] \to t_3(\overline x) \approx [t_4(\overline x) \to t_5(\overline x)] \to t_6(\overline x),$$ 
	then 
	$$\mathcal V\models [t_1(\overline x) \to t_2(\overline x)] \to t_3(\overline x) \approx [t_4(\overline x) \to t_5(\overline x)] \to t_6(\overline x).$$
\end{Theorem}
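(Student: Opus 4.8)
The plan is to fix an arbitrary $\mathbf A \in \mathcal V$ and transport the hypothesis from a suitable subalgebra lying in $\mathcal I_{2,0}$ back to all of $\mathbf A$. Consider the double-negation map $\sigma \colon A \to A$ given by $\sigma(x) := x''$, and set $B := \sigma(A) = \{x'' : x \in A\}$. First I would check that $\mathbf B := \langle B, \to, 0\rangle$ is a subalgebra of $\mathbf A$ belonging to $\mathcal I_{2,0}$. Since $0'' = 0$ by (I$_0$) we have $0 \in B$; since $x'' = (x \to 0)\to 0$ has the form $(u \to v)\to w$, the fact that $x'''' \approx x''$ in $\mathcal I$ shows every element of $B$ is a fixed point of $\sigma$, so $\mathbf B \models x'' \approx x$; and for $a'', b'' \in B$ the element $a'' \to b'' = [(a \to 0)\to 0]\to b''$ is again of the form $(u \to v)\to w$, hence equals its own double negation by Lemma~\ref{lemaOfIZI}(\ref{lemaOfIZI3_4}) and therefore lies in $B$. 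Thus $\mathbf B$ is a subalgebra, so $\mathbf B \in \mathcal V$ (varieties are closed under subalgebras), and $\mathbf B \in \mathcal I_{2,0}$; consequently $\mathbf B \in \mathcal V \cap \mathcal I_{2,0}$ and $\mathbf B$ satisfies the given identity.

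The crux is to prove that $\sigma$ is a homomorphism of $\mathbf A$ onto $\mathbf B$, i.e. that $\mathcal I \models (x \to y)'' \approx x'' \to y''$. I would split this into the two coordinates. For the first coordinate, Lemma~\ref{lemaOfIZI}(\ref{lemaOfIZI2_4}) gives $(x \to y)' = (x'' \to y)'$, whence $(x \to y)'' = (x'' \to y)''$. For the second coordinate I would use the defining identity (I) with $z := 0$: since $(y \to 0)' = y''$, it yields $(x \to y)' = [(0' \to x)\to y'']'$, and replacing $y$ by $y''$ together with $y'''' \approx y''$ shows the right-hand side is unchanged, so $(x \to y)' = (x \to y'')'$ and hence $(x \to y)'' = (x \to y'')''$. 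Combining, $(x \to y)'' = (x'' \to y)'' = (x'' \to y'')''$; finally $x'' \to y'' = [(x \to 0)\to 0]\to y''$ is of the form $(u \to v)\to w$, so it is fixed by $\sigma$ by Lemma~\ref{lemaOfIZI}(\ref{lemaOfIZI3_4}), giving $(x'' \to y'')'' = x'' \to y''$ and therefore $(x \to y)'' = x'' \to y''$. Together with $\sigma(0) = 0$ this makes $\sigma$ an endomorphism of $\mathbf A$ with image $\mathbf B$.

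With $\sigma$ a homomorphism the conclusion follows quickly. Writing $L(\overline x) := [t_1 \to t_2]\to t_3$ and $R(\overline x) := [t_4 \to t_5]\to t_6$, I would evaluate at an arbitrary tuple $\overline a$ from $A$. Since $\sigma$ is a homomorphism, $\sigma(L(\overline a)) = L(\sigma(\overline a))$, and because $\sigma(\overline a)$ is a tuple from $B$ and $\mathbf B$ satisfies the identity, $L(\sigma(\overline a)) = R(\sigma(\overline a))$; hence $L(\overline a)'' = R(\overline a)''$. But $L(\overline a)$ and $R(\overline a)$ are both of the form $(u \to v)\to w$, so by Lemma~\ref{lemaOfIZI}(\ref{lemaOfIZI3_4}) each equals its own double negation, and therefore $L(\overline a) = R(\overline a)$. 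As $\overline a$ and $\mathbf A \in \mathcal V$ were arbitrary, $\mathcal V$ satisfies the identity.

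The main obstacle I anticipate is exactly the second-coordinate compatibility $(x \to y)'' \approx (x \to y'')''$, which — unlike its first-coordinate analogue recorded in Lemma~\ref{lemaOfIZI}(\ref{lemaOfIZI2_4}) — is not among the quoted preliminary identities and must be extracted directly from the defining identity (I); once it is in hand, the remainder is routine bookkeeping with Lemma~\ref{lemaOfIZI}(\ref{lemaOfIZI3_4}) and the identity $x'''' \approx x''$. It is worth noting that the special syntactic shape of the hypothesis — both sides being of the form $(u \to v)\to w$ — is essential, since it is precisely this shape that guarantees invariance of both sides under $\sigma$ via Lemma~\ref{lemaOfIZI}(\ref{lemaOfIZI3_4}).
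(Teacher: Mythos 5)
Your proof is correct, and it is essentially the argument behind the cited result: the paper only quotes Theorem~\ref{Transfer_Theo} from \cite{cornejo2016derived}, where it is proved via exactly this device — the derived algebra $A''=\{a'':a\in A\}$ lies in $\mathcal V\cap\mathcal I_{2,0}$, the double-negation map is a homomorphism onto it, and terms of the shape $(u\to v)\to w$ are fixed by double negation via Lemma~\ref{lemaOfIZI}(\ref{lemaOfIZI3_4}). Your extraction of $(x\to y)''\approx(x\to y'')''$ directly from (I) with $z:=0$ is a clean way to supply the one compatibility fact not listed among the preliminaries.
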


\subsection{Identities of Associative Type}

We now turn our attention to identities of associative type of length 3.  Recall that such an identity will contain three distinct variables that occur in any order and that are grouped in one of the two (obvious) ways.  The following identities play a crucial role in the sequel.

Let $\Sigma$ denote the set consisting of the following $14$ identities of associative type (of length 3 in the binary language $\langle \to \rangle$):

\begin{multicols}{2}
\begin{enumerate}[(${A}1$)]
\item   $x \to (y \to z) \approx (x \to y) \to z$ \label{identidadAsoc}\\   (Associative law)
\item  $x \to (y \to z) \approx x \to (z \to y)$
\item   $x \to (y \to z) \approx (x \to z) \to y$ 
\item   $x \to (y \to z) \approx y \to (x \to z)$
\item  $x \to (y \to z) \approx (y \to x) \to z$
\item   $x \to (y \to z) \approx y \to (z \to x)$
\item   $x \to (y \to z) \approx (y \to z) \to x$
\item  $x \to (y \to z) \approx  (z \to x) \to y$
\item   $x \to (y \to z) \approx z \to (y \to x)$
\item  $x \to (y \to z) \approx (z \to y) \to x$
\item   $(x \to y) \to z \approx (x \to z) \to y$ 
\item  $(x \to y) \to z \approx (y \to x) \to z$ 
\item   $(x \to y) \to z \approx (y \to z) \to x$ 
\item  $(x \to y) \to z \approx (z \to y) \to x$. 
\end{enumerate}
\end{multicols}

 We will denote by $\mathcal A_i$ the subvariety of $\mathcal I$ defined by the identity $(Ai)$, for $1 \leq i \leq 14$.  Such varieties will be referred to as subvarieties of $\mathcal I$ of associative type.
 
The following proposition is crucial for the rest of the paper.

\begin{Proposition} \label{propo_neces_identities} 
Let $\mathcal G$ be the variety of all groupoids of type $\{\to \}$  and
Let $\mathcal V$ denote the subvariety of $\mathcal G$ defined by a single identity of associative type.  Then $ V =\mathcal A_i$ for $i \in \{1, 2, \cdots, 14\}$.  
\end{Proposition}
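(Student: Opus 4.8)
The plan is to prove the statement by a purely combinatorial orbit count, resting on the single observation that the only device available over the variety $\mathcal{G}$ of all groupoids for turning one associative-type identity into another, while preserving the variety it defines, is the renaming of the three variables. Concretely, if $\pi$ is a permutation of $\{x,y,z\}$ and $p \approx q$ is any identity, then a groupoid satisfies $p \approx q$ (read as a universally quantified sentence) if and only if it satisfies $p^{\pi} \approx q^{\pi}$, because satisfaction of an identity is invariant under a bijective renaming of its universally quantified variables. Hence $S_3 = \mathrm{Sym}(\{x,y,z\})$ acts on the set of all identities of associative type, and any two identities in the same orbit define the same subvariety of $\mathcal{G}$. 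It therefore suffices to show that every identity of associative type lies in the $S_3$-orbit of exactly one of $(A1)$--$(A14)$; the index $i$ of that representative gives $\mathcal{V} = \mathcal{A}_i$.

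First I would enumerate the terms. There are exactly $12$ terms of associative type: the six of shape $o \to (o \to o)$, which I write $t_a(\sigma)$, and the six of shape $(o \to o) \to o$, which I write $t_b(\sigma)$; in each case the six terms of a fixed shape are naturally indexed by the elements $\sigma$ of $S_3$, and renaming by $\pi$ acts as $\sigma \mapsto \pi\sigma$. Thus $S_3$ permutes the six terms of each shape simply transitively (the regular representation) and preserves each shape. Setting aside the trivial identities $p \approx p$, which merely define $\mathcal{G}$ and give no proper associative-type law, every identity is an unordered pair of distinct terms, and I split the analysis by the shapes of the two sides: both of shape $(a)$, both of shape $(b)$, or one of each.

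For a mixed pair $\{t_a(\sigma), t_b(\tau)\}$ I would use transitivity on the shape-$(a)$ side to rename so that the shape-$(a)$ term becomes $x \to (y \to z)$; the other side is then an arbitrary shape-$(b)$ term, one of six possibilities, which exhibits every mixed identity as equivalent to exactly one of $x \to (y \to z) \approx (x \to y)\to z,\ \approx (x \to z)\to y,\ \approx (y \to x)\to z,\ \approx (y \to z)\to x,\ \approx (z \to x)\to y,\ \approx (z \to y)\to x$, that is, to one of $(A1),(A3),(A5),(A7),(A8),(A10)$. For a same-shape pair I would again rename one side to $x \to (y \to z)$ (respectively $(x \to y) \to z$), leaving the other side as $t_a(s)$ (resp. $t_b(s)$) with $s \neq e$; here the unordered nature of the pair produces one further identification, namely $s$ with $s^{-1}$, since renaming by $s^{-1}$ interchanges the two sides. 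The non-identity elements of $S_3$ fall into four classes under $s \sim s^{-1}$ — the three transpositions, each self-inverse, together with the single pair of mutually inverse $3$-cycles — so each same-shape case yields exactly four orbits, with representatives $(A2),(A4),(A9),(A6)$ in shape $(a)$ and $(A11),(A12),(A14),(A13)$ in shape $(b)$.

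Assembling the three cases gives $6 + 4 + 4 = 14$ orbits, represented precisely by $(A1)$--$(A14)$; the defining identity of $\mathcal{V}$ therefore lies in the orbit of some $(Ai)$, whence $\mathcal{V} = \mathcal{A}_i$. The one genuinely delicate point, and the step I would treat most carefully, is the bookkeeping in the same-shape cases: one must correctly record the collapse $s \sim s^{-1}$ arising from the unordered pair (it is exactly this that reduces the five a priori possibilities to four) and verify that the four chosen representatives match the listed identities rather than some permutation of them. As a cross-check I would recompute the totals by Burnside's lemma: on unordered pairs within one shape the identity contributes $15$ fixed pairs, each transposition fixes the $3$ pairs it swaps and each $3$-cycle fixes none, giving $(15 + 3\cdot 3)/6 = 4$; on the ordered mixed pairs only the identity has fixed points, giving $36/6 = 6$.
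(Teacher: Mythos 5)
Your proof is correct and follows essentially the same route as the paper, which enumerates the $12$ terms and the resulting $66$ identities and then asserts that the collapse to the $14$ representatives is ``routine to verify'' in the variety of groupoids. Your version supplies exactly the content the paper leaves implicit --- the invariance of satisfaction under the $S_3$ renaming action and the orbit count ($6$ mixed orbits plus $4+4$ same-shape orbits, with the $s \sim s^{-1}$ identification correctly accounting for the unordered pairs, confirmed by Burnside) --- and your representatives do match $(A1)$--$(A14)$ as listed.
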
 

\begin{proof}


In an identity $p \approx q$ of associative type of length 3,  
 $p$ and $q$ have exactly 3 (distinct) variables, say x,y,z, and these variables are grouped according to one of the  
two ways of bracketing mentioned above.
Thus, there are six permuatations of 3 variables which give rise to the following 12 terms:

\begin{itemize}
\item[1a]: $x \to (y \to z)$ \quad \quad 1b:  $(x \to y) \to z$\\
\item[2a]: $x \to (z \to y)$ \quad \quad 2b:  $(x \to z) \to y$\\
\item[3a]:  $y \to (x \to z)$ \quad \quad 3b: $(y \to x) \to z$\\
\item[4a]:  $y \to (z \to x)$ \quad \quad 4b:  $(y \to z) \to x$\\
\item[5a]:  $z \to (x \to y)$ \quad \quad 5b: $( z \to x) \to y$\\
\item[6a]: $ z \to (y \to x)$ \quad \quad 6b: $(z \to y) \to x$.\\
\end{itemize} 

It is clear that these 12 terms, in turn, will lead to  
 66 identities in view of the symmetric property of equality.  It is routine to verify that each of the 66 identities is equivalent to one of the 14 identities of $\Sigma$ in the variety of groupoids.  Then the proposition follows. 
 \end{proof}



Our goal, in this paper, is to determine the distinct subvarieties of $\mathcal I$ and to  
  describe the poset of subvarieties of $\mathcal I$ of associative type.  It suffices to concentrate  on the varieties  defined by identities (A1)-(A14), in view of the above proposition.\\
 



\section{ Properties of subvarieties of $\mathcal I$ of Associative type}

 In this section we present properties of several subvarieties of $\mathcal I$ which will play a crucial role in our analysis of the identities of associative type relative to $\mathcal I$.

\begin{Lemma} \label{Lem_160317_01}
	Let $\mathbf A \in \mathcal{I}$ such that 
	$\mathbf A \models x' \to y \approx x \to y'$, 
	then $\mathbf A \models (x \to y) \to y' \approx x \to y'$.
\end{Lemma}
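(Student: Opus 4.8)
The plan is to prove the identity by means of the Transfer Theorem (Theorem~\ref{Transfer_Theo}), which reduces the verification to the involutive case. Write $(\ast)$ for the hypothesis $x' \to y \approx x \to y'$ and let $\mathcal V$ be the subvariety of $\mathcal I$ defined relative to $\mathcal I$ by $(\ast)$; since $\mathbf A \models (\ast)$ we have $\mathbf A \in \mathcal V$, and as the assertion is purely equational it suffices to show that $\mathcal V \models (x \to y) \to y' \approx x \to y'$. The crucial preliminary step is to put the target into a shape to which Theorem~\ref{Transfer_Theo} applies. The right-hand side $x \to y'$ is not of the form $[\,\cdot \to \cdot\,] \to \cdot$, but using $(\ast)$ we may replace $x \to y'$ by $x' \to y$, that is by $(x \to 0) \to y$. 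Hence, modulo $(\ast)$, the target is equivalent to
\[
(x \to y) \to y' \approx (x \to 0) \to y,
\]
in which both sides now have the outer form $[t_1 \to t_2] \to t_3$ demanded by the Transfer Theorem.

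By Theorem~\ref{Transfer_Theo} it therefore suffices to establish this identity in $\mathcal V \cap \mathcal I_{2,0}$, i.e.\ to prove $(x \to y) \to y' \approx x \to y'$ under the additional law $x'' \approx x$. Here the computation is short. Instantiating $(\ast)$ at $y := 0$ gives $x'' \approx x \to 0'$, and since $x'' \approx x$ holds in $\mathcal I_{2,0}$ this collapses to the simplification $x \to 0' \approx x$. On the other hand, Lemma~\ref{general_properties3}(\ref{281014_05}) reads $(x \to 0') \to y \approx (x \to y') \to y$; substituting $y := y'$ and using $y'' \approx y$ turns this into
\[
(x \to y) \to y' \approx (x \to 0') \to y'.
\]
Combining the two displayed facts, $(x \to y) \to y' \approx (x \to 0') \to y' \approx x \to y'$, which settles the identity throughout $\mathcal V \cap \mathcal I_{2,0}$.

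Finally, Theorem~\ref{Transfer_Theo} promotes this to $\mathcal V \models (x \to y) \to y' \approx (x \to 0) \to y$, and one last use of $(\ast)$ rewrites $(x \to 0) \to y$ back to $x \to y'$, yielding $\mathcal V \models (x \to y) \to y' \approx x \to y'$; in particular $\mathbf A$ satisfies the required identity.

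I expect the only genuine obstacle to be the opening move rather than the algebra itself. Because the conclusion's right-hand side $x \to y'$ does not carry the outer bracketing $[\,\cdot \to \cdot\,] \to \cdot$, the Transfer Theorem cannot be invoked on the target as written; the hypothesis $(\ast)$ must be used at the very start precisely to convert $x \to y'$ into $(x \to 0) \to y$ and so expose the required shape on both sides. Once this is recognized, the involutive verification is immediate, its crux being the drastic collapse $x \to 0' \approx x$ that $(\ast)$ forces in the presence of $x'' \approx x$; note that this collapse fails in general (e.g.\ in $\mathbf{2_z}$), which is exactly why routing the argument through $\mathcal I_{2,0}$ via the Transfer Theorem is the natural strategy.
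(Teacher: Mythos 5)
Your proof is correct, and it leans on the same two ingredients as the paper's --- Lemma~\ref{general_properties2}(\ref{281014_05}) and Theorem~\ref{Transfer_Theo} --- but deploys them differently. The paper transfers only the auxiliary identity $(x \to y) \to y' \approx (x \to 0') \to y'$ (obtained from Lemma~\ref{general_properties2}(\ref{281014_05}) by the substitution $y \mapsto y'$ in $\mathcal I_{2,0}$) from $\mathcal I_{2,0}$ to all of $\mathcal I$, and then finishes at the level of $\mathcal I$ by applying the hypothesis three times together with Lemma~\ref{Lem_200317_02}: $(a \to 0') \to b' = (a' \to 0) \to b' = a'' \to b' = a''' \to b = a' \to b = a \to b'$. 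You instead take $\mathcal V$ to be the subvariety of $\mathcal I$ axiomatized by the hypothesis, massage the target into the bracketed shape $(x \to y) \to y' \approx (x \to 0) \to y$ required by Theorem~\ref{Transfer_Theo}, and carry out the entire verification inside $\mathcal V \cap \mathcal I_{2,0}$, where the hypothesis instantiated at $y := 0$ collapses to $x \to 0' \approx x$ and makes the computation immediate; you then transfer the finished identity. Both uses of the Transfer Theorem are legitimate (your $t_5 = 0$ is a perfectly good term), and your route buys a shorter involutive calculation that avoids Lemma~\ref{Lem_200317_02} entirely, at the cost of the extra framing step of recognizing that the hypothesis itself cuts out a subvariety to which the theorem may be applied. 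Your closing observation that the collapse $x \to 0' \approx x$ fails in $\mathbf{2_z}$ (which satisfies the hypothesis but is not involutive) correctly identifies why the detour through $\mathcal I_{2,0}$ is doing real work.
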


\begin{proof}
	Let $a,b \in A$. Then
\noindent $	(a \to b) \to b'	$
$\overset{  \ref{general_properties2} (\ref{281014_05})\, \& \, \ref{Transfer_Theo} 
}{=}  (a \to 0') \to b' $
$\overset{ hyp 
}{=}  (a' \to 0) \to b' $
$\overset{  
}{=}  a'' \to b' $
$\overset{ hyp 
}{=}  a''' \to b $
$\overset{  \ref{Lem_200317_02} 
}{=}  a' \to b $
$\overset{ hyp 
}{=}  a \to b' $.

\end{proof}

\begin{Lemma} \label{Lem_160317_02}
	Let $\mathbf A \in \mathcal{I}_{2,0}$ such that 
	$\mathbf A \models (x \to y)' \approx x \to (0 \to y)$, 
	then $\mathbf A \models x \to y' \approx x \to (0 \to y)$.
\end{Lemma}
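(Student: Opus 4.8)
The target identity $x \to y' \approx x \to (0 \to y)$ would follow immediately from the single auxiliary identity $0 \to y \approx y'$: substituting $y' = 0 \to y$ into the left-hand side turns $x \to y'$ into $x \to (0 \to y)$. So the whole proof reduces to establishing $0 \to y \approx y'$ in $\mathbf A$, using the hypothesis $(x \to y)' \approx x \to (0 \to y)$ together with the fact that $\mathbf A \in \mathcal{I}_{2,0}$.

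The first step is to specialize the hypothesis at $y := 0$. The left-hand side becomes $(x \to 0)' = x''$, which collapses to $x$ because $\mathbf A$ satisfies $x'' \approx x$ (identity \eqref{eq_I20}); the right-hand side becomes $x \to (0 \to 0) = x \to 0'$, since $0' = 0 \to 0$ by definition of $'$. This yields the derived identity
\[
x \to 0' \approx x .
\]
I expect this to be the crux of the argument: the point is simply to notice that feeding $y=0$ into the hypothesis forces a collapse via the involution $x'' \approx x$, producing a ``right unit'' law for $0'$. Everything afterward is routine manipulation with the catalogued properties of $\mathcal{I}_{2,0}$.

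Next I would combine $x \to 0' \approx x$ with Lemma~\ref{general_properties}\,(\ref{cuasiConmutativeOfImplic2}), namely $x' \to 0' \approx 0 \to x$. Replacing $x$ by $x'$ in the derived identity gives $x' \to 0' \approx x'$, and chaining this with Lemma~\ref{general_properties}\,(\ref{cuasiConmutativeOfImplic2}) produces
\[
0 \to x \approx x' \to 0' \approx x' ,
\]
which is exactly the auxiliary identity $0 \to y \approx y'$.

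Finally, with $0 \to y \approx y'$ in hand, I would read it right-to-left and substitute into the operation $\to$ in the second coordinate, obtaining $x \to y' \approx x \to (0 \to y)$, which is the desired conclusion. No appeal to the Transfer Theorem or to the full identity (I) is needed; the only ingredients are the hypothesis evaluated at $y=0$, the involution law $x'' \approx x$, and the quasi-commutativity identity of Lemma~\ref{general_properties}.
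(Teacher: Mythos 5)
Your proof is correct, but it takes a genuinely different route from the paper's. The paper substitutes $x := 0'$ into the hypothesis: it writes $a \to b'$ as $a \to (0' \to b)'$ using $0' \to z \approx z$ (Lemma \ref{general_properties_equiv}(\ref{TXX}), available since $\mathbf A \in \mathcal{I}_{2,0}$), applies the hypothesis to turn $(0' \to b)'$ into $0' \to (0 \to b)$, and collapses again with $0' \to z \approx z$ — a three-line sandwich that never isolates an auxiliary identity. You instead substitute $y := 0$ into the hypothesis, use involution to obtain $x \to 0' \approx x$, and then chain with Lemma \ref{general_properties}(\ref{cuasiConmutativeOfImplic2}) to extract the global identity $0 \to y \approx y'$, from which the lemma is an immediate substitution. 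Your intermediate facts are strictly stronger than what the lemma asserts (they identify the unary terms $0 \to y$ and $y'$ outright, not merely in the second argument of $\to$), which is potentially reusable elsewhere; the cost is a dependence on Lemma \ref{general_properties}(\ref{cuasiConmutativeOfImplic2}) where the paper needs only the single equivalence $0' \to z \approx z$. Both arguments are equally rigorous, and your observation that neither the Transfer Theorem nor the full identity (I) is needed also matches the paper's proof.
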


\begin{proof}
	Let $a,b \in A$. Then
\noindent $	a \to b' $
$\overset{  \ref{general_properties_equiv} (\ref{TXX}) 
}{=}  a \to (0' \to  b)' $
$\overset{ hyp 
}{=}  a \to [0' \to  (0 \to b)] $
$\overset{  \ref{general_properties_equiv} (\ref{TXX})
}{=}  a \to (0 \to b) $.
\end{proof}

\begin{Lemma} \label{Lem_160317_03}
	Let $\mathbf A \in \mathcal{I}_{2,0}$ such that 
	$\mathbf A \models (x \to y)' \approx x \to (0 \to y)$.  Then\\ 
	 $\mathbf A \models [x \to (y \to z)']' \approx x \to (y \to (0 \to z))'$.
\end{Lemma}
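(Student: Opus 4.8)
The plan is to first extract a clean consequence of the hypothesis, namely the identity $(x \to y)' \approx x \to y'$, and then to use that identity together with the ``$0 \to$ slides across $\to$'' rule from Lemma~\ref{general_properties2}(\ref{071114_04}) to transform the left-hand side into the right-hand side by a short computation.

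First I would note that the hypothesis $(x \to y)' \approx x \to (0 \to y)$, combined with Lemma~\ref{Lem_160317_02} (which, under the very same hypothesis, yields $x \to y' \approx x \to (0 \to y)$), gives at once the identity
\[
(x \to y)' \approx x \to y',
\]
which I shall refer to as $(\ast)$. This identity is the real engine of the proof: it lets me move a prime from outside an implication to its second argument, and back, freely, and it is precisely this freedom that the conclusion encodes.

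Next I would compute the left-hand side directly. Applying the hypothesis with the compound term $(y \to z)'$ sitting in the second slot gives
\[
[x \to (y \to z)']' \approx x \to \bigl(0 \to (y \to z)'\bigr).
\]
Now I would rewrite $(y \to z)'$ as $y \to z'$ using $(\ast)$, and then invoke Lemma~\ref{general_properties2}(\ref{071114_04}) in the form $0 \to (y \to z') \approx y \to (0 \to z')$ to push the leading $0 \to$ inside, bringing the expression to $x \to \bigl(y \to (0 \to z')\bigr)$.

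Finally I would reassemble the right-hand side using two more applications of $(\ast)$: the instance $(0 \to z)' \approx 0 \to z'$ turns the innermost term $0 \to z'$ back into $(0 \to z)'$, and then the instance $\bigl(y \to (0 \to z)\bigr)' \approx y \to (0 \to z)'$ pulls the prime outward, producing $x \to (y \to (0 \to z))'$, as required. I do not anticipate a genuine obstacle here once $(\ast)$ is in hand; the only point that requires care is keeping track of the direction in which $(\ast)$ is applied at each step, since within the same chain it is used both ``outside-to-inside'' (to create an interior prime) and ``inside-to-outside'' (to recover the exterior prime on the right-hand side).
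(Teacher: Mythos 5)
Your proof is correct and follows essentially the same route as the paper's: apply the hypothesis to the outer prime, rewrite the inner prime $(y\to z)'$, slide the leading $0\to$ inward via Lemma \ref{general_properties2}(\ref{071114_04}), and reassemble. The only differences are cosmetic --- you channel the rewriting through $(x\to y)'\approx x\to y'$ (obtained from Lemma \ref{Lem_160317_02}) where the paper applies the hypothesis directly in both directions, and you add the final prime-relocation step $y\to(0\to z)'\approx (y\to(0\to z))'$, which the paper's displayed chain leaves implicit.
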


\begin{proof}
	Let $a,b,c \in A$. We have that
%
\noindent $	[a \to (b \to c)']' $
$\overset{ hyp 
}{=}  a \to (0 \to (b \to c)') $
$\overset{ hyp 
}{=}  a \to (0 \to (b \to (0 \to c))) $
$\overset{  \ref{general_properties2} (\ref{071114_04}) 
}{=}  a \to (b \to (0 \to (0 \to c)))  $
$\overset{ hyp 
}{=}  a \to [b \to (0 \to c)'] $.
\end{proof}

\begin{Lemma} \label{Lem_200317_03}
	Let $\mathbf A \in \mathcal{I}$ such that $\mathbf A $ satisfies:
\begin{enumerate}[(1)]
	\item $(x \to y)' \approx x \to (0 \to y),$ \label{200317_04}
	\item $x' \to y \approx x \to y'$. \label{200317_05}
\end{enumerate}
	Then, $\mathbf A \models 0 \to [x \to (y \to z)] \approx 0 \to [(x \to y) \to z]$.
\end{Lemma}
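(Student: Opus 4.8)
The plan is to split the argument into two phases: first reduce to the involutive case by invoking the Transfer Theorem \ref{Transfer_Theo}, and then prove the identity inside $\mathcal V \cap \mathcal I_{2,0}$, where $\mathcal V$ denotes the subvariety of $\mathcal I$ defined by the two hypotheses (1) and (2). The only subtlety in applying Theorem \ref{Transfer_Theo} is that its template requires both sides to have the shape $[t_1 \to t_2]\to t_3$, whereas our two sides begin with $0 \to (\cdots)$. I would remove this mismatch by first establishing $\mathbf A \models 0 \to 0' \approx 0$: instantiating hypothesis (2) at $x=y=0$ gives $0' \to 0 \approx 0 \to 0'$, and the left-hand side is $0''$, which equals $0$ by (I$_{0}$). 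Hence in $\mathcal V$ (and in $\mathcal V \cap \mathcal I_{2,0}$) one may rewrite each side $0 \to w$ as $(0 \to 0') \to w$, putting the target identity into the form $[t_1 \to t_2] \to t_3 \approx [t_4 \to t_5] \to t_6$ with $t_1 = t_4 = 0$ and $t_2 = t_5 = 0 \to 0$. By Theorem \ref{Transfer_Theo} it therefore suffices to prove the identity in $\mathcal V \cap \mathcal I_{2,0}$.

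The heart of the argument is the observation that, inside $\mathcal V \cap \mathcal I_{2,0}$, the two hypotheses collapse the structure dramatically: I claim $\mathbf A \models x' \approx x$ and $\mathbf A \models 0 \to x \approx x$. To see this, recall from Lemma \ref{general_properties_equiv} that $x'' \approx x$ is equivalent to $0' \to x \approx x$; combining $0' \to y \approx y$ with hypothesis (2) at $x=0$ (namely $0' \to y \approx 0 \to y'$) yields $0 \to y' \approx y$, and substituting $y'$ for $y$ together with $y'' \approx y$ gives $0 \to y \approx y'$. On the other hand, hypothesis (1) at $x=0$ combined with Lemma \ref{general_properties2}(\ref{311014_03}) gives $(0 \to y)' \approx 0 \to y$. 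Feeding $0 \to y \approx y'$ into this last identity produces $y'' \approx y'$, whence $y \approx y'$; and then $0 \to y \approx y' \approx y$.

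With $x' \approx x$ and $0 \to x \approx x$ in hand, $0$ becomes a two-sided unit for $\to$, so the target identity $0 \to [x \to (y \to z)] \approx 0 \to [(x \to y) \to z]$ simplifies to the plain associative law $x \to (y \to z) \approx (x \to y) \to z$. It remains to prove associativity in $\mathcal V \cap \mathcal I_{2,0}$. Here the governing identity (I), after applying $x' \approx x$, reduces to $(x \to y) \to z \approx (z \to x) \to (y \to z)$, and Lemmas \ref{general_properties2}(\ref{281014_05}) and \ref{general_properties2}(\ref{281114_01}) specialize (using $0' \approx 0$, $x' \approx x$ and $x \to 0 \approx x$) to $x \to y \approx (x \to y) \to y$ and $x \to (y \to x) \approx y \to x$, respectively. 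I would then compute, for $w := y \to z$,
\[
(x \to y) \to z \approx (z \to x) \to (y \to z) \approx \big((y \to z) \to z\big) \to \big(x \to (y \to z)\big) \approx w \to (x \to w) \approx x \to w,
\]
using (I) twice, then $(y \to z) \to z \approx y \to z$, then $w \to (x \to w) \approx x \to w$; this last term is exactly $x \to (y \to z)$, establishing associativity and completing the second phase.

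The step I expect to be the main obstacle is recognizing and proving the collapse $x' \approx x$ in $\mathcal V \cap \mathcal I_{2,0}$: it is not visible directly from the hypotheses and requires chaining the $\mathcal I_{2,0}$-characterization of Lemma \ref{general_properties_equiv} with both hypotheses in the correct order. Once this is in place, the remainder is routine. A secondary point to get right is the Transfer-Theorem bookkeeping, since the identity must be massaged into the prescribed $[\,\cdot \to \cdot\,] \to \cdot$ shape before Theorem \ref{Transfer_Theo} applies; this is precisely why establishing $0 \to 0' \approx 0$ at the very start is essential.
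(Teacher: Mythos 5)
Your proposal is correct, but it takes a genuinely different route from the paper. The paper proves the identity by one direct computation in $\mathcal I$: it unfolds $(a\to b)\to c$ with (I) twice and then rewrites the result using the three preparatory Lemmas \ref{Lem_160317_01}--\ref{Lem_160317_03} together with Lemma \ref{general_properties3}~(\ref{281114_01}) and the two hypotheses, invoking Theorem \ref{Transfer_Theo} step by step; no structural collapse is ever observed. You instead make a single global reduction to $\mathcal V\cap\mathcal I_{2,0}$ (your bookkeeping with $0\to 0'\approx 0$ is sound, though one could more simply write $0\to w\approx 0''\to w\approx(0'\to 0)\to w$ using (I$_0$) alone to fit the $[t_1\to t_2]\to t_3$ template), and then prove the striking fact that the hypotheses together with $x''\approx x$ force $x'\approx x$ and $0\to x\approx x$ --- I checked each step of that collapse and of your subsequent three-line derivation of associativity from the reduced form of (I) and the specializations of Lemma \ref{general_properties3}~(\ref{281014_05}) and (\ref{281114_01}), and they all hold. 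Your route buys a cleaner conceptual picture: it bypasses Lemmas \ref{Lem_160317_01}--\ref{Lem_160317_03} entirely, shows that the involutive members of any variety satisfying (1) and (2) lie in $\mathcal I_{1,0}$ and are associative (which foreshadows the collapse to $\mathcal{SL}$ in Section 5), and makes clear that the $0\to$ prefix in the statement is only doing work outside $\mathcal I_{2,0}$. The paper's computation, by contrast, stays entirely inside the equational calculus it has already set up and reuses its preparatory lemmas, which is why those lemmas were stated in the first place.
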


\begin{proof}
Let $a,b, c \in A$. Then,
\noindent $0 \to [(a \to b) \to c] $
$\overset{  (I) 
}{=}  0 \to [(c' \to a) \to (b \to c)']' $
$\overset{  (I) 
}{=}  0 \to \{[(b \to c)'' \to c'] \to [a \to (b \to c)']\}'' $
$\overset{  \ref{Transfer_Theo} 
}{=}  0 \to \{[(b \to c) \to c'] \to [a \to (b \to c)']\}'' $
$\overset{  \ref{Lem_160317_01} 
}{=}  0 \to \{[b \to  c'] \to [a \to (b \to c)']\}'' $
$\overset{  \ref{Lem_160317_02} \, \& \, \ref{Transfer_Theo} \, \& \, hyp
}{=}  0 \to \{[b \to  (0 \to c)] \to [a \to (b \to c)']\}'' $
$\overset{  \ref{Lem_160317_03} \, \& \, hyp 
}{=}  0 \to \{[b \to  (0 \to c)] \to [a \to [b \to  (0 \to c)]']\}'' $
$\overset{  \ref{general_properties2} (\ref{281114_01})\, \& \, \ref{Transfer_Theo} 
}{=}  0 \to \{a \to [b \to  (0 \to c)]'\}'' $
$\overset{  (\ref{200317_04}) 
}{=}  0 \to \{a \to [b \to  c]''\}'' $
$\overset{  \ref{Transfer_Theo} 
}{=}  0 \to \{a \to [b \to  c]\}'' $
$\overset{  (\ref{200317_05}) 
}{=}  0' \to \{a \to [b \to  c]\}' $
$\overset{  (\ref{200317_05}) 
}{=}  0'' \to \{a \to [b \to  c]\} $
$\overset{  
}{=}  0 \to \{a \to [b \to  c]\}  $.
\end{proof}

\begin{Lemma} \label{Lem_280317_01}
	Let $\mathbf A \in \mathcal I$
	 such that 
	$\mathbf A \models (x \to y)' \approx (y \to x)'$.  Then\\ 
	 $\mathbf A \models (x \to y) \to z \approx (y \to x) \to z$.
\end{Lemma}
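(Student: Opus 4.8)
The target identity $(x \to y) \to z \approx (y \to x) \to z$ has precisely the syntactic shape $[t_1 \to t_2] \to t_3 \approx [t_4 \to t_5] \to t_6$ (with $t_1 = x$, $t_2 = y$, $t_3 = z$, $t_4 = y$, $t_5 = x$, $t_6 = z$) to which the Transfer Theorem \ref{Transfer_Theo} applies. My plan is therefore to let $\mathcal V$ be the subvariety of $\mathcal I$ defined, relative to $\mathcal I$, by the hypothesis $(x \to y)' \approx (y \to x)'$, and to reduce the whole problem to verifying the conclusion in $\mathcal V \cap \mathcal I_{2,0}$. Once the identity is established there, the Transfer Theorem immediately lifts it to all of $\mathcal V$, and in particular to the given $\mathbf A \in \mathcal V$.

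The crucial observation is that inside $\mathcal I_{2,0}$ the hypothesis is far stronger than it looks. Let $\mathbf B \in \mathcal V \cap \mathcal I_{2,0}$. Since $\mathbf B \in \mathcal I_{2,0}$ we have $\mathbf B \models w'' \approx w$ for every term $w$, so $'$ acts as an involution on $\mathbf B$. Applying $'$ to both sides of the defining identity $(x \to y)' \approx (y \to x)'$ of $\mathcal V$ and then cancelling the double primes via $w'' \approx w$, I obtain $\mathbf B \models (x \to y)'' \approx (y \to x)''$, that is, $\mathbf B \models x \to y \approx y \to x$. Thus in the involutive case the hypothesis collapses to full commutativity of $\to$.

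From commutativity the conclusion is essentially free: substituting $x \to y \approx y \to x$ into the left factor gives $\mathbf B \models (x \to y) \to z \approx (y \to x) \to z$. Hence $\mathcal V \cap \mathcal I_{2,0} \models (x \to y) \to z \approx (y \to x) \to z$, and Theorem \ref{Transfer_Theo} yields $\mathcal V \models (x \to y) \to z \approx (y \to x) \to z$, which is exactly the claimed identity for $\mathbf A$.

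I do not anticipate a genuine obstacle here; the content of the argument is the recognition that the Transfer Theorem reduces the statement to the involutive setting, where priming both sides of the hypothesis turns $(x \to y)' \approx (y \to x)'$ into honest commutativity. The only points requiring care are purely bookkeeping: confirming that the target sits in the admissible format for \ref{Transfer_Theo}, and noting that every algebra in $\mathcal V \cap \mathcal I_{2,0}$ still satisfies the defining identity of $\mathcal V$ so that the priming/cancellation step is legitimate.
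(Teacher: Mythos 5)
Your proof is correct, and it rests on exactly the same tool as the paper's: the Transfer Theorem (Theorem \ref{Transfer_Theo}). The two arguments deploy it differently, though. The paper stays inside the given algebra $\mathbf A$ and runs a three-step rewriting chain: $(a \to b) \to c = (a \to b)'' \to c = (b \to a)'' \to c = (b \to a) \to c$, where the two outer equalities are instances of the Transfer Theorem taken with $\mathcal V = \mathcal I$ (the identity $(x \to y) \to z \approx (x \to y)'' \to z$ has the admissible shape and holds trivially in $\mathcal I_{2,0}$, hence in all of $\mathcal I$), and the middle equality is the hypothesis with a prime applied to both sides. You instead invoke the Transfer Theorem once, for the target identity itself, relative to the subvariety $\mathcal V$ of $\mathcal I$ axiomatized by the hypothesis; the work then happens in $\mathcal V \cap \mathcal I_{2,0}$, where priming the hypothesis and cancelling double primes yields genuine commutativity $x \to y \approx y \to x$, from which the target is immediate. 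What your packaging buys is the conceptual point, never made explicit in the paper, that in the involutive setting the hypothesis \emph{is} full commutativity; what the paper's buys is brevity --- no auxiliary variety needs to be introduced, and the whole proof is a one-line computation. Both are valid, and the bookkeeping points you flag (the target identity has the form $[t_1 \to t_2] \to t_3 \approx [t_4 \to t_5] \to t_6$ with the $t_i$ variables, and every algebra of $\mathcal V \cap \mathcal I_{2,0}$ still satisfies the defining identity of $\mathcal V$) do check out.
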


\begin{proof} Let $a,b,c \in A$. Hence
\noindent $(a \to b) \to c$
$\overset{  \ref{Transfer_Theo} 
}{=}  (a \to b)'' \to c $
$\overset{ hyp 
}{=}  (b \to a)'' \to c $
$\overset{  \ref{Transfer_Theo} 
}{=}  (b \to a) \to c $
\end{proof}

\begin{definition}
	Let $\mathbf A \in \mathcal I$. We say that $\mathcal A$ is of type 1 if the following identities hold in $\mathbf{A}$:
\begin{enumerate} [{(${E}1$)}]
	\item $(x \to y)' \approx x \to (0 \to y)$, \label{typeI_equation1}
	\item $x' \to y \approx x \to y'$, \label{typeI_equation2}
	\item  $0 \to (x \to y) \approx 0 \to (y \to x)$, \label{typeI_equation3}
	\item  $x \to (y \to z) \approx (p(x) \to p(y)) \to p(z)$, where $p$ is some permutation of 
	$\{x,y,z\}$.  \label{typeI_equation4}
\end{enumerate}
\end{definition}

\begin{Theorem} \label{Theo_230317_15}
If $\mathbf A \in \mathcal I$ is of type 1 then $\mathbf A \models (Aj)$ for all $1 \leq j \leq 14$.
	
\end{Theorem}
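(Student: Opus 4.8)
The plan is to avoid proving involution directly and instead to pivot, via the Transfer Theorem (\ref{Transfer_Theo}), onto the much better understood involutive subvariety. This detour is essential: involution fails in general for type~1 algebras --- for instance the zero algebra $\mathbf{2_z}$ is of type~1 yet does not satisfy $x'' \approx x$ --- so any attempt to derive $x'' \approx x$ from \textrm{(E1)}--\textrm{(E4)} is doomed. Let $\mathcal V$ be the subvariety of $\mathcal I$ axiomatized by \textrm{(E1)}, \textrm{(E2)}, \textrm{(E3)} together with the particular instance of \textrm{(E4)} (i.e.\ the permutation $p$) that holds in $\mathbf A$, so that $\mathbf A \in \mathcal V$. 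Each of (A11)--(A14) already has the shape $[t_1 \to t_2] \to t_3 \approx [t_4 \to t_5] \to t_6$ required by \ref{Transfer_Theo}; each of (A1)--(A10) can be brought to this shape by using \textrm{(E4)} once to rewrite its right-bracketed side $x \to (y \to z)$ as the left-bracketed term $(p(x) \to p(y)) \to p(z)$. Since \textrm{(E4)} holds throughout $\mathcal V$, it therefore suffices, by \ref{Transfer_Theo}, to verify all fourteen identities in $\mathcal V \cap \mathcal I_{2,0}$.

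So the heart of the argument is to show that $\mathcal V \cap \mathcal I_{2,0}$ is both commutative and associative with respect to $\to$. First I would establish the bridge identity $0 \to x \approx x'$: in $\mathcal I_{2,0}$ every element satisfies $a = a'' = a' \to 0$, hence lies in the image of $\to$, so writing $a = u \to v$ and applying \textrm{(E1)} and then \ref{general_properties2}(\ref{071114_04}) gives $a' = u \to (0 \to v) = 0 \to (u \to v) = 0 \to a$. With this in hand, \textrm{(E3)} reads $(x \to y)' \approx (y \to x)'$, and one application of $'$ followed by $x'' \approx x$ yields commutativity $x \to y \approx y \to x$. For associativity, feed \textrm{(E1)} and \textrm{(E2)} into \ref{Lem_200317_03} to get $0 \to [x \to (y \to z)] \approx 0 \to [(x \to y) \to z]$; rewriting both occurrences of $0 \to (\cdot)$ as $(\cdot)'$ via $0 \to x \approx x'$ and then applying $'$ and involution collapses this to (A1). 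Once $\to$ is commutative and associative, its groupoid reduct is a commutative semigroup, so every bracketing and ordering of $x,y,z$ produces the same value; hence all of (A1)--(A14) (indeed every identity in $\Sigma$) hold in $\mathcal V \cap \mathcal I_{2,0}$. Note that no case analysis on $p$ is needed here: commutativity and associativity subsume every permutation simultaneously.

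Finally, transferring back through \ref{Transfer_Theo} gives (A11)--(A14) in $\mathcal V$ directly, and, after undoing the \textrm{(E4)} rewrite, (A1)--(A10) in $\mathcal V$ as well; specializing to $\mathbf A$ completes the proof. The main obstacle --- and the crux of the whole argument --- is the opening move: recognizing that one should not try to prove involution (it is false), but should instead use \ref{Transfer_Theo} to \emph{assume} it, paying the small price of first massaging the ten right-bracketed identities into left-bracketed form with \textrm{(E4)}. After that reduction the commutativity and associativity computations in the involutive world are short, and the derivation of $0 \to x \approx x'$ is the one genuinely new identity that does the work.
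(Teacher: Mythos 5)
Your proof is correct, and it is organized genuinely differently from the paper's. The paper never passes wholesale to the involutive subvariety: it argues case by case ($j=1$, $j=2$, $j=11$, with the rest ``similar''), working directly in $\mathbf A$ by sandwiching each left-bracketed term between double negations via Lemma \ref{lemaOfIZI}(\ref{lemaOfIZI3_4}), pulling the inner prime out as a $0\to(\cdot)$ prefix via (E\ref{typeI_equation1}) and Lemma \ref{general_properties2}(\ref{071114_04}), rearranging under that prefix using (E\ref{typeI_equation3}) together with the consequence $0 \to [x \to (y\to z)] \approx 0 \to [(x\to y)\to z]$ of Lemma \ref{Lem_200317_03}, and then undoing the moves; the Transfer Theorem is invoked only for auxiliary sub-steps. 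You instead apply Theorem \ref{Transfer_Theo} to the fourteen identities themselves (after using (E\ref{typeI_equation4}) to put every right-bracketed side into the form $[t_1\to t_2]\to t_3$), which reduces everything to $\mathcal V \cap \mathcal I_{2,0}$; there your bridge identity $0 \to x \approx x'$ turns (E\ref{typeI_equation3}) into commutativity and Lemma \ref{Lem_200317_03} into associativity, so the reduct is a commutative semigroup and all $66$ rearrangements collapse at once. The underlying engine is the same --- Lemma \ref{Lem_200317_03}, Lemma \ref{general_properties2}(\ref{071114_04}), and the permutation/bracketing invariance of $0\to t$ --- but your packaging buys a uniform argument with no case analysis on $j$ or on the permutation $p$, at the price of one extra observation (that every element of an involutive algebra lies in the image of $\to$, so $0\to x\approx x'$ follows from (E\ref{typeI_equation1})). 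Your opening remark that involution genuinely fails for type-$1$ algebras (witness $\mathbf{2_z}$) correctly identifies why the detour through $\mathcal I_{2,0}$ must go via the Transfer Theorem rather than via a derivation of $x''\approx x$. The only imprecision is the phrase ``its right-bracketed side'' (singular): for (A2), (A4), (A6), (A9) both sides are right-bracketed and (E\ref{typeI_equation4}) must be applied to each, but your later reference to massaging all ten identities shows you intend exactly this, and the step is harmless since (E\ref{typeI_equation4}) holds throughout $\mathcal V$ under every substitution.
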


\begin{proof}
	Let $\mathbf A \in \mathcal I$ be of type 1, and $a,b,c \in A$.  In view of equations (${E}\ref{typeI_equation1}$), (${E}\ref{typeI_equation2}$) and Lemma \ref{Lem_200317_03} we have that
\begin{equation}\label{210317_01}
\mathbf A \models 0 \to [x \to (y \to z)] \approx 0 \to [(x \to y) \to z].
\end{equation}
Then we can consider the following cases.
\begin{itemize}
	\item Assume that $j=1$.  Then 
\noindent $a \to (b \to c)	$
$\overset{  (E\ref{typeI_equation4}) 
}{=}  (p(a) \to p(b)) \to p(c) $
$\overset{  \ref{lemaOfIZI} 
}{=}  [(p(a) \to p(b)) \to p(c)]'' $
$\overset{  (E\ref{typeI_equation1}) 
}{=}  [(p(a) \to p(b)) \to (0 \to p(c))]' $
$\overset{  \ref{general_properties2} (\ref{071114_04})\, \& \, \ref{Transfer_Theo} 
}{=}  [0 \to [(p(a) \to p(b)) \to p(c)]]' $
$\overset{ (E\ref{typeI_equation3}) \, \& \, (\ref{210317_01}) 
}{=}  [0 \to [(a \to b) \to c]]' $
$\overset{  \ref{general_properties2} (\ref{071114_04})  \, \& \, \ref{Transfer_Theo} 
}{=}  [(a \to b) \to [0 \to c]]' $
$\overset{  (E\ref{typeI_equation1}) 
}{=}  [(a \to b) \to c]'' $
$\overset{  \ref{lemaOfIZI} 
}{=}  (a \to b) \to c. $

The cases $j=3, 5, 7, 8, 10$ are similar.	

	\item Assume that $j=2$. Then, in the same way as in the case of $j = 1$ we have that
\begin{equation}\label{210317_02}
\mathbf A \models x \to (y \to z) \approx [0 \to [(p(x) \to p(y)) \to p(z)]]'.
\end{equation}
Then, 
\noindent $a \to (b \to c) $
$\overset{  (\ref{210317_02}) 
}{=}  [0 \to [(p(a) \to p(b)) \to p(c)]]' $
$\overset{(E\ref{typeI_equation3}) \, \& \, (\ref{210317_01}) 
}{=}  [0 \to [(p(a) \to p(c)) \to p(b)]]' $
$\overset{  \ref{general_properties2} (\ref{071114_04})  \, \& \, \ref{Transfer_Theo} 
}{=}  [(p(a) \to p(c)) \to [0 \to p(b)]]' $
$\overset{  (E\ref{typeI_equation1}) 
}{=}  [(p(a) \to p(c)) \to p(b)]'' $
$\overset{  \ref{lemaOfIZI} (\ref{lemaOfIZI3_4}) 
}{=}  (p(a) \to p(c)) \to p(b) $
$\overset{  (E\ref{typeI_equation4}) 
}{=}  (a \to c) \to b $.

The cases $j=4, 6, 9$ are similar.

\item Assume that $j=11$.
\noindent $	(a \to b) \to c	$
$\overset{  \ref{lemaOfIZI} (\ref{lemaOfIZI3_4})
}{=}   [(a \to b) \to c]'' $
$\overset{  (E\ref{typeI_equation1}) 
}{=}  [(a \to b) \to (0 \to c)]' $
$\overset{  \ref{general_properties2} (\ref{071114_04})  \, \& \, \ref{Transfer_Theo} 
}{=}  [0 \to [(a \to b) \to c]]' $
$\overset{ (E\ref{typeI_equation3}) \, \& \, (\ref{210317_01}) 
}{=}  [0 \to [(a \to c) \to b]]' $
$\overset{  \ref{general_properties2} (\ref{071114_04})  \, \& \, \ref{Transfer_Theo} 
}{=}  [(a \to c) \to [0 \to b]]' $
$\overset{  (E\ref{typeI_equation1}) 
}{=}  [(a \to c) \to b]'' $
$\overset{  \ref{lemaOfIZI} 
}{=}  (a \to c) \to b. $

	The cases $j=12, 13, 14$ are similar.

\end{itemize}
\end{proof}

To prove that the variety $\mathcal A_j$ is of type 1, with $j \in \{3,5,7,8,10\}$, we need the following lemmas.

\begin{Lemma} \label{230317_11}
	If $\mathbf A \in \mathcal A_5 $ then $\mathbf A$ satisfies
	\begin{enumerate}[(a)]
		\item $x' \to y \approx x \to y'$ \label{210317_05}
		\item $(x \to y)' \approx 0 \to (x \to y)$ \label{210317_06}
		\item $x \to (0 \to y) \approx 0 \to (x \to y)$ \label{210317_07}
	\end{enumerate}
\end{Lemma}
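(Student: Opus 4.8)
The plan is to exploit the defining identity (A5), namely $x \to (y \to z) \approx (y \to x) \to z$, in two stages: first extract its ``$0$-instances'', then reduce each of (a), (b), (c) to an identity whose two sides are both \emph{left-nested}, i.e.\ of the shape $(\,\cdot \to \cdot\,) \to \cdot$, so that the Transfer Theorem (Theorem~\ref{Transfer_Theo}) lets me verify them only in the involutive case $\mathcal A_5 \cap \mathcal I_{2,0}$, where the primes are well behaved.

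First I would substitute $0$ for one variable at a time in (A5). Taking $z = 0$ gives $x \to y' \approx (y \to x)'$ (equivalently $(x\to y)' \approx y \to x'$); taking $x = 0$ gives $0 \to (y \to z) \approx y' \to z$; and taking $y = 0$ gives $x \to (0 \to z) \approx (0 \to x) \to z$. Using these together with the abbreviation $x' = x \to 0$, each target becomes a left-nested identity after a single further application of (A5) to the ``non-nested'' side: (a) $x' \to y \approx x \to y'$ becomes $(x \to 0) \to y \approx (y \to x) \to 0$; (b) $(x \to y)' \approx 0 \to (x \to y)$ becomes $(x \to y) \to 0 \approx (x \to 0) \to y$; and (c) $x \to (0 \to y) \approx 0 \to (x \to y)$ becomes $(0 \to x) \to y \approx (x \to 0) \to y$. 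Verifying that each rewriting genuinely lands in the form required by Theorem~\ref{Transfer_Theo} is a clerical point to be checked carefully.

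Next, for each of these three left-nested identities I would invoke the Transfer Theorem with $\mathcal V = \mathcal A_5$: it suffices to prove them in $\mathcal A_5 \cap \mathcal I_{2,0}$, where I may assume $x'' \approx x$ and use the full strength of Lemmas~\ref{general_properties} and~\ref{general_properties2}. In this involutive setting I can freely interconvert $(\cdot \to \cdot)'$, $0 \to (\cdot \to \cdot)$ and $\cdot \to (0 \to \cdot)$, using in particular Lemma~\ref{general_properties2}(\ref{071114_04}), which gives $x' \to y \approx 0 \to (x \to y) \approx x \to (0 \to y)$, together with Lemma~\ref{general_properties2}(\ref{291014_10}) and Lemma~\ref{general_properties}, aiming to collapse both sides of each identity to a common normal form; Lemma~\ref{lemaOfIZI} is then used to reinsert or remove a double prime and return to the original statements.

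The main obstacle I anticipate is precisely this involutive computation. Although Theorem~\ref{Transfer_Theo} removes the nuisance of non-idempotent negation, after the $0$-instances the common left-hand side $x' \to y$ must be matched against three different companion terms---$(y \to x)'$ for (a), $(x \to y)'$ for (b), and $(0 \to x) \to y$ for (c)---so the three parts are tightly interlocked rather than independent. I therefore expect to prove them in a coordinated order: first establish a commutativity-type identity for primed products (for instance $x' \to y \approx y \to x'$, which is exactly the involutive form of (b)), then deduce (a) by additionally comparing $(x \to y)'$ with $(y \to x)'$, and handle (c) through the same circle of equalities. This comparison of primed products via Lemmas~\ref{general_properties} and~\ref{general_properties2} is where I expect the real work to lie.
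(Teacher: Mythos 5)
Your overall framework---extract the $0$-instances of (A5), rewrite each side of (a), (b), (c) into the left-nested shape $(t_1\to t_2)\to t_3$, and invoke Theorem \ref{Transfer_Theo} to work in $\mathcal A_5\cap\mathcal I_{2,0}$---is sound and close in spirit to the paper, which likewise runs entirely on $0$-substitutions into (A5) and on the Transfer Theorem (though the paper applies Transfer only locally, to insert or delete double primes inside an equational chain carried out in $\mathcal A_5$ itself, rather than reducing each whole statement to the involutive case). Your three reductions to left-nested identities are correct. One misattribution: $x'\to y\approx 0\to(x\to y)$ is \emph{not} given by Lemma \ref{general_properties2} (\ref{071114_04}) (it already fails in $\mathbf{2_b}\in\mathcal I_{2,0}$); it is the $x=0$ instance of (A5). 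The identity is still available to you, but only because you are inside $\mathcal A_5$.

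The genuine gap sits in part (b), exactly where you locate ``the real work,'' and your proposed order of attack makes it harder rather than easier. Under the $0$-instances of (A5), $(x\to y)'$ normalizes to $y\to x'$ while $0\to(x\to y)$ normalizes to $x'\to y$, so (b) amounts to the commutation $y\to x'\approx x'\to y$; in the involutive setting every element is a prime, so this is full commutativity of $\to$, and none of Lemmas \ref{general_properties}, \ref{general_properties2}, \ref{lemaOfIZI} supplies it. Your plan to establish this commutation first and then deduce (a) inverts the only workable dependency: the paper proves (a) \emph{first} and self-containedly, via $a\to b' = (b\to a)\to 0 = (b\to a'')\to 0$ (by Transfer) $= (b\to(a'\to 0))\to 0 = ((a'\to b)\to 0)\to 0 = (a'\to b)'' = a'\to b$, the key move being the insertion of the double prime so that (A5) can be applied a second time; only then does it derive (b) in three lines from (a), and (c) from (a) together with Lemma \ref{general_properties}. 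Note moreover that even with (a) in hand the natural chain yields $(x\to y)'\approx 0\to(y\to x)$, i.e., with the variables swapped relative to the stated right-hand side of (b); so ``collapsing both sides to a common normal form'' produces two normal forms that differ by precisely the commutation you are trying to establish. You need either to prove (a) first by the double-prime trick and then say explicitly how that swap is reconciled, or to give an independent derivation of $y\to x'\approx x'\to y$; the proposal as written supplies neither.
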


\begin{proof}
	Let $a,b \in A$. Then
	\begin{enumerate}[(a)]
		\item 	
\noindent $		a \to b' $
$\overset{  
}{=}  a \to (b \to 0) $
$\overset{  (A5) 
}{=}  (b \to a) \to 0 $
$\overset{  \ref{Transfer_Theo} 
}{=}  (b \to a'') \to 0 $
$\overset{  
}{=}  (b \to (a' \to 0)) \to 0 $
$\overset{  (A5) 
}{=}  [(a' \to b) \to 0] \to 0 $
$\overset{  
}{=}  (a' \to b)'' $
$\overset{  \ref{Transfer_Theo}
}{=}  a' \to b $.

\item
Observe that
\noindent $(a \to b)' $
$\overset{  
}{=}  (a \to b) \to 0 $
$\overset{  (A5) 
}{=}  b \to (a \to 0) $
$\overset{  
}{=}  b \to a' $
$\overset{  \ref{230317_11} (\ref{210317_05}) 
}{=}  b' \to a $
$\overset{  
}{=}  (b \to 0) \to a $
$\overset{  (A5) 
}{=}  0 \to (b \to a) $.
\item 
Notice that 
\noindent $0 \to (a \to b) $
$\overset{  (A5) 
}{=}  (a \to 0) \to b $
$\overset{  \ref{Transfer_Theo} 
}{=}  (a'' \to 0) \to b $
$\overset{  \ref{230317_11} (\ref{210317_05}) 
}{=}  (a' \to 0') \to b $
$\overset{  \ref{general_properties} (\ref{cuasiConmutativeOfImplic}) \, \& \, \ref{Transfer_Theo} 
}{=}  (0 \to a) \to b $
$\overset{  (A5) 
}{=}  a \to (0 \to b) $.
	\end{enumerate}	
\end{proof}

\begin{Lemma} \label{230317_12}
	If $\mathbf A \in \mathcal A_8 $ then $\mathbf A$ satisfies:
	\begin{enumerate}[(a)]
		\item $x \to y' \approx x' \to y'$, \label{230317_01}
		\item $ x \to y' \approx 0 \to (y' \to x)$. \label{230317_02}
	\end{enumerate}
\end{Lemma}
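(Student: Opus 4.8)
The plan is to pull a single instance out of (A8) and then let the Transfer Theorem (Theorem~\ref{Transfer_Theo}) carry the work. I would first instantiate (A8) at $z=0$ to get, throughout $\mathcal A_8$, the auxiliary identity
\[
x \to y' \approx (0 \to x) \to y ,
\]
which I will call $(\ast)$. Using $(\ast)$ twice (once with $x$ and once with $x'$), statement (a) becomes equivalent, relative to $\mathcal A_8$, to the identity $(0 \to x) \to y \approx (0 \to x') \to y$. The point of this reformulation is that the latter identity has exactly the shape $[t_1 \to t_2] \to t_3 \approx [t_4 \to t_5] \to t_6$ required by Theorem~\ref{Transfer_Theo}; hence it suffices to establish it in $\mathcal A_8 \cap \mathcal I_{2,0}$, after which Theorem~\ref{Transfer_Theo} lifts it to all of $\mathcal A_8$ and $(\ast)$ converts it back to (a).

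The heart of the argument is therefore to show that $\mathcal A_8 \cap \mathcal I_{2,0} \models x' \approx x$; once this collapse is known, $(0 \to x) \to y \approx (0 \to x') \to y$ holds trivially in $\mathcal A_8 \cap \mathcal I_{2,0}$. To prove the collapse, I would first instantiate (A8) at $a=0$, $b=x$, $c=0$ (recalling that $0' = 0 \to 0$) to obtain $0 \to x' \approx 0' \to x$; combining this with Lemma~\ref{general_properties_equiv}(\ref{TXX}), which gives $0' \to x \approx x$ in $\mathcal I_{2,0}$, yields $0 \to x' \approx x$, and hence, after replacing $x$ by $x'$ and using $x'' \approx x$, also $0 \to x \approx x'$. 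Feeding $0 \to x \approx x'$ into $(\ast)$ produces $x \to y' \approx x' \to y$; specializing $y := x$ gives $x \to x' \approx x' \to x$, and $x' \to x \approx x$ from Lemma~\ref{general_properties_equiv}(\ref{LeftImplicationwithtilde}) then gives $x \to x' \approx x$. Finally, Lemma~\ref{general_properties_equiv}(\ref{reflexivity}), which says $(x \to x')' \approx x$, forces $x' \approx x$. This completes the collapse.

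With (a) in hand for all of $\mathcal A_8$, statement (b) is immediate: applying (A8) with $a=0$, $b=y'$, $c=x$ gives $0 \to (y' \to x) \approx (x \to 0) \to y' = x' \to y'$, and (a) rewrites the right-hand side as $x \to y'$. I expect the only real obstacle to be spotting the correct reformulation in the first step: statement (a) is not itself of transferable shape, and neither $x \to y'$ nor $x' \to y'$ is, so the whole strategy hinges on first rewriting both sides via $(\ast)$ into the form the Transfer Theorem accepts. After that, the $\mathcal I_{2,0}$-collapse $x' \approx x$ is the technical core, and it is obtained purely by chaining (A8) against the four equivalent conditions packaged in Lemma~\ref{general_properties_equiv}.
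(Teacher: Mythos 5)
Your proof is correct, but it takes a genuinely different route from the paper's. The paper proves (b) first, by one long equational chain that starts from $a\to b'$, invokes the defining identity (I) once, and then applies (A8) repeatedly, calling on Theorem \ref{Transfer_Theo} only locally to import individual $\mathcal I_{2,0}$-identities mid-computation; part (a) then falls out in one further application of (A8). You instead prove (a) first, and your key move is structural rather than computational: after rewriting both sides of (a) into the transferable shape $(0\to x)\to y \approx (0\to x')\to y$ via the instance $x\to y'\approx(0\to x)\to y$ of (A8), you make a single global appeal to Theorem \ref{Transfer_Theo} and reduce everything to the involutive case, where you establish the much stronger collapse $\mathcal A_8\cap\mathcal I_{2,0}\models x'\approx x$. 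I checked that collapse step by step (the chain $0\to x'\approx 0'\to x\approx x$, hence $0\to x\approx x'$, hence $x\to y'\approx x'\to y$, hence $x\to x'\approx x$, hence $x'\approx x$ via Lemma \ref{general_properties_equiv}) and it is sound; it is in fact consistent with what the paper only obtains implicitly much later, in the proof of Lemma \ref{110717_03}. What your approach buys is conceptual economy and an interesting structural byproduct (every involutive member of $\mathcal A_8$ satisfies $x'\approx x$); what the paper's approach buys is self-containment at this point in the development and a proof of (b) that does not route through (a). Both derivations of the remaining part from the first-proved part (your (b) from (a); the paper's (a) from (b)) are single applications of (A8) and are equally immediate.
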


\begin{proof}
Let $a,b \in A$. Then
\noindent $ a \to b' $
$\overset{  
}{=}  a \to (b \to 0) $
$\overset{  (A8) 
}{=}  (0 \to a) \to b $
$\overset{  (I) 
}{=}  [(b' \to 0) \to (a \to b)']' $
$\overset{  
}{=}  [(b' \to 0) \to (a \to b)'] \to 0 $
$\overset{  (A8) 
}{=}  (a \to b)' \to (0 \to (b' \to 0)) $
$\overset{  (A8) 
}{=}  (a \to b)' \to ((0 \to 0) \to b') $
$\overset{  
}{=}  (a \to b)' \to (0' \to b') $
$\overset{  \ref{general_properties_equiv} (\ref{TXX}) \, \& \, \ref{Transfer_Theo} 
}{=}  (a \to b)' \to b' $
$\overset{  
}{=}  ((a \to b) \to 0) \to b' $
$\overset{  (A8) 
}{=}  0 \to (b' \to (a \to b)) $
$\overset{  (A8) 
}{=}  0 \to ((b \to b') \to a) $
$\overset{  \ref{Transfer_Theo} 
}{=}  0 \to ((b'' \to b') \to a) $
$\overset{  \ref{general_properties_equiv} (\ref{LeftImplicationwithtilde}) \, \& \, \ref{Transfer_Theo} 
}{=}  0 \to (b' \to a) $, implying that $\mathbf A$ satisfies the identity (b).
Next,
$0 \to (b' \to a) $
$\overset{  (A8) 
}{=}  (a \to 0) \to b' $
$\overset{  
}{=}  a' \to b' $, thus $\mathbf A$ satisfies the identity (a).


\end{proof}

\begin{Lemma} \label{230317_13}
	If $\mathbf A \in \mathcal A_{10}$ then $\mathbf A$ satisfies:
	\begin{enumerate}[(a)]
		\item $[0 \to (x \to y)]' \approx x \to y'$, \label{230317_06}
		\item $(y \to x)'' \approx x \to y'$, \label{230317_07}
		\item $(x \to y)' \approx x \to y'$. \label{230317_08}
	\end{enumerate}
\end{Lemma}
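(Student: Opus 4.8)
The plan is to reduce all three identities to a single ``commutativity'' identity in the involutive subreduct $\mathcal A_{10}\cap\mathcal I_{2,0}$, prove that identity there, and then lift everything back to $\mathcal A_{10}$ by Theorem~\ref{Transfer_Theo}. First I would record the three substitution instances of (A10) obtained by setting $z:=0$, $y:=0$ and $x:=0$, namely (P1) $x\to y'\approx(0\to y)\to x$, (P2) $x\to(0\to z)\approx z'\to x$, and (P3) $0\to(x\to y)\approx(y\to x)'$; these carry most of the syntactic load. The key observation is that each of (a), (b), (c) has the form $E\approx x\to y'$, where by (P1) the right-hand side rewrites as $(0\to y)\to x$, while $E$ is (or, unfolding $'$, can be written as) a term of associative shape $[\,\cdot\to\cdot\,]\to\cdot$: for (c) it is $(x\to y)\to 0$, for (a) it is $(0\to(x\to y))\to 0$, and for (b) one first uses (P3) to write $(y\to x)''=((y\to x)\to 0)\to 0$. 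Thus, after this rewriting, each of the three identities is of the form $[t_1\to t_2]\to t_3\approx[t_4\to t_5]\to t_6$, and by Theorem~\ref{Transfer_Theo} it suffices to prove each one in $\mathcal A_{10}\cap\mathcal I_{2,0}$.

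So I would now work in $\mathcal A_{10}\cap\mathcal I_{2,0}$, where $x''\approx x$. The whole problem collapses onto the single identity (R) $x'\to y\approx x\to y'$. I would first derive the auxiliary identity (S) $(x\to y)'\approx x'\to y$ by the short chain $(x\to y)'\overset{(P3)}{=}0\to(y\to x)=y\to(0\to x)\overset{(P2)}{=}x'\to y$, where the middle step is the identity $0\to(u\to v)\approx u\to(0\to v)$ of Lemma~\ref{general_properties2}. The heart of the argument is then the (I)-chain proving $x\to y'\approx y'\to x'$: beginning with $x\to y'\overset{(P1)}{=}(0\to y)\to x$, I would apply (I), use involution to collapse $x'\to 0=x''=x$, rewrite the inner factor $(y\to x)'$ by (S), fold the bracket with (A10), apply (S) twice more, unfold the bracket with (A10) again, and finish with the identity $u\to(v\to u')\approx v\to u'$ of Lemma~\ref{general_properties2}. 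Combining $x\to y'\approx y'\to x'$ with $x''\approx x$ yields $x'\to y\approx y\to x$, and two applications of the latter give (R).

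With (R) in hand over $\mathcal A_{10}\cap\mathcal I_{2,0}$, the three verifications become immediate: (c) reads $(x\to y)'\overset{(S)}{=}x'\to y\overset{(R)}{=}x\to y'$; for (b), involution gives $(y\to x)''=y\to x$, while $x\to y'\overset{(R)}{=}x'\to y=y\to x$; and (a) follows from (b) together with $[0\to(x\to y)]'=(y\to x)''$, which is (P3) primed once. Theorem~\ref{Transfer_Theo}, applied to the associative-shape reformulations established in the first paragraph, then lifts (a), (b), (c) to all of $\mathcal A_{10}$, and composing with (P1) restores the stated right-hand side $x\to y'$ in each case.

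I expect the single genuine obstacle to be the (I)-chain establishing $x\to y'\approx y'\to x'$. Unlike the mild transposition performed by (A5) in Lemma~\ref{230317_11}, the law (A10) reverses the order of all three variables, so the bracket must be folded and unfolded by (A10) several times, and the chain only closes because the $\mathcal I_{2,0}$-identities of Lemma~\ref{general_properties2} and the involution $x''\approx x$ are available. This is precisely why the detour through $\mathcal A_{10}\cap\mathcal I_{2,0}$ via Theorem~\ref{Transfer_Theo} is not merely convenient but necessary.
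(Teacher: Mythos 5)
Your proof is correct, but it takes a genuinely different route from the paper's. The paper attacks item (a) head-on: a single long chain starting from $a\to b'=(0\to b)\to a$, applying (I) once and then simplifying with the identities of Lemma \ref{general_properties2}, each individual step being lifted from $\mathcal I_{2,0}$ to $\mathcal I$ by a \emph{local} application of Theorem \ref{Transfer_Theo} wrapped in an associative-shape context; items (b) and (c) are then read off from (a) in two lines each using (A10). You invert the order: your central computation is in substance item (b) in the involutive setting --- the quasi-commutativity $x'\to y\approx y\to x$, obtained from your auxiliary identity (S) and an (I)-chain closed by Lemma \ref{general_properties2} (\ref{281114_01}) --- and (a) and (c) then fall out. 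You also deploy Theorem \ref{Transfer_Theo} \emph{globally} rather than locally: both sides of each target identity are first put into the shape $[t_1\to t_2]\to t_3$ (the right side via your instance (P1) of (A10), the left sides by unfolding $'$), the whole identity is proved in $\mathcal A_{10}\cap\mathcal I_{2,0}$, and a single transfer brings it back to $\mathcal A_{10}$. Both uses of the theorem are legitimate; your arrangement buys a cleaner working environment (no double primes to carry around, and (S): $(x\to y)'\approx x'\to y$ is immediate from (P2), (P3) and Lemma \ref{general_properties2} (\ref{071114_04})), at the cost of having to verify the associative-shape reformulations up front. I checked your (I)-chain, $x\to y'=(0\to y)\to x=[x\to(y\to x)']'=[x\to(y'\to x)]'=[(x\to y')\to x]'=(x'\to y')\to x=x\to(y'\to x')=y'\to x'$, and it closes as claimed. (One cosmetic slip: $(y\to x)''=((y\to x)\to 0)\to 0$ is just the definition of $'$, not an instance of (P3).)
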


\begin{proof} Let $a,b \in A$.
	\begin{enumerate}[(a)]
		\item
We have that 
$$
\mathbf A \models [0 \to (x \to y)]' \approx x \to y',
$$
since
$$
\begin{array}{lcll}
a \to b' & = & a \to (b \to 0) & \mbox{} \\
& = & (0 \to b) \to a & \mbox{by (A10)} \\
& = & [(a' \to 0) \to (b \to a)']' & \mbox{by (I)} \\
& = & [((a \to 0) \to 0) \to (b \to a)']' & \mbox{} \\
& = & [(0 \to (0 \to a)) \to (b \to a)']' & \mbox{by (A10)} \\
& = & [(0 \to a) \to (b \to a)']' & \mbox{by \ref{general_properties2} (\ref{311014_03})  and \ref{Transfer_Theo}} \\
& = & [(0 \to a) \to [(b \to a) \to 0]]' & \mbox{} \\
& = & [(0 \to a) \to [0 \to (a \to b)]]' & \mbox{by (A10)} \\
& = & [(0 \to a) \to [(0 \to a) \to (0 \to b)]]' & \mbox{by \ref{general_properties2} (\ref{071114_04}) and (\ref{311014_06}) and by \ref{Transfer_Theo}} \\
& = & [(0 \to a) \to (0 \to b)]' & \mbox{by \ref{general_properties2} (\ref{031114_04}) and \ref{Transfer_Theo}} \\
& = & [0 \to (a \to b)]' & \mbox{by \ref{general_properties2} (\ref{311014_06}),  (\ref{071114_04}) 
and by \ref{Transfer_Theo}}. 
\end{array}
$$
\item Observe that
\noindent $a \to b' $
$\overset{  \ref{230317_13} (\ref{230317_06}) 
}{=}   [0 \to (a \to b)]' $
$\overset{  (A10) 
}{=}  [(b \to a) \to 0]' $
$\overset{  
}{=}  (b \to a)'' $.
Hence,  $\mathbf A \models {\rm(b)}$.
\item Since
\noindent $a \to b' $
$\overset{  \ref{230317_13} (\ref{230317_06}) 
}{=}   [0 \to (a \to b)]' $
$\overset{  \ref{general_properties2} (\ref{311014_03}) \, \& \, \ref{Transfer_Theo} 
}{=}  [0 \to (0 \to (a \to b))]' $
$\overset{  (A10) 
}{=}  [((a \to b) \to 0) \to 0]' $
$\overset{  
}{=}  (a \to b)''' $
$\overset{  
}{=}  (a \to b)', $ we conclude that $\mathbf A \models {\rm(c)}$.
	\end{enumerate}
\end{proof}

\begin{Lemma} \label{Lem_230317_14}
	If $\mathbf A \in \mathcal A_3 \cup \mathcal A_5 \cup \mathcal A_7 \cup \mathcal A_8 \cup \mathcal A_{10}$ then $\mathbf A$ satisfies
\begin{enumerate}[{\rm (1)}]
	\item $(x \to y)' \approx x \to (0 \to y)$, \label{300717_01}
	\item $x' \to y \approx x \to y'$ and \label{300717_02}
	\item $0 \to (x \to y) \approx 0 \to (y \to x)$.  \label{300717_03}
\end{enumerate}
\end{Lemma}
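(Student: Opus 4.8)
The plan is to treat the five varieties $\mathcal A_3,\mathcal A_5,\mathcal A_7,\mathcal A_8,\mathcal A_{10}$ one at a time, and in each case to push the three target identities down to the involutive subvariety $\mathcal A_i\cap\mathcal I_{2,0}$. The device that makes this legitimate is Theorem \ref{Transfer_Theo}: for each $i$ the defining identity $(Ai)$ lets me rewrite the right-hand side of each of (\ref{300717_01}), (\ref{300717_02}), (\ref{300717_03}) as a term of the shape $[\,\cdot\to\cdot\,]\to\cdot$, so that the whole identity acquires the form $[t_1\to t_2]\to t_3\approx[t_4\to t_5]\to t_6$ to which the Transfer Theorem applies. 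For instance, applying $(A7)$ to $x\to(0\to y)$ produces $(0\to y)\to x$, so (\ref{300717_01}) becomes $(x\to y)\to 0\approx(0\to y)\to x$; applying $(A3)$ with $z=0$ turns (\ref{300717_02}) into $x\to y'\approx x'\to y$ outright, and applying $(A3)$ with $y=0$ turns (\ref{300717_01}) into the triviality $x\to(0\to y)\approx(x\to y)\to 0$. After this cosmetic step it suffices to prove the three identities in $\mathcal A_i\cap\mathcal I_{2,0}$, where $'$ is an involution and the whole stock of $\mathcal I_{2,0}$-identities of Lemmas \ref{general_properties} and \ref{general_properties2} is available.

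For $\mathcal A_5,\mathcal A_8,\mathcal A_{10}$ I would lean on the dedicated Lemmas \ref{230317_11}, \ref{230317_12}, \ref{230317_13}, which already isolate the essential equalities. In $\mathcal A_5$, identity (\ref{300717_02}) is exactly \ref{230317_11}(\ref{210317_05}), and (\ref{300717_01}) is the composite of \ref{230317_11}(\ref{210317_06}) and \ref{230317_11}(\ref{210317_07}). In $\mathcal A_8$, Lemma \ref{230317_12} gives $x\to y'\approx x'\to y'$ and $x\to y'\approx 0\to(y'\to x)$; combined with Lemma \ref{lemaOfIZI} these yield (\ref{300717_01}) and (\ref{300717_02}), while substituting $y\mapsto b'$ in \ref{230317_12}(\ref{230317_02}) gives $a\to b\approx 0\to(b\to a)$ in the reduct, and iterating this with \ref{general_properties2}(\ref{311014_03}) collapses both sides of (\ref{300717_03}) to $a\to b$. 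In $\mathcal A_{10}$, Lemma \ref{230317_13}(\ref{230317_08}) is the half-step $(x\to y)'\approx x\to y'$, and \ref{230317_13}(\ref{230317_06}),(\ref{230317_07}) supply the remaining relations needed to assemble (\ref{300717_01})--(\ref{300717_03}).

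For $\mathcal A_3$ and $\mathcal A_7$ there is no prepared lemma, so I would argue straight from the defining identity. As noted, $(A3)$ gives (\ref{300717_01}) and (\ref{300717_02}) immediately by specializing $y=0$ and $z=0$. For $\mathcal A_7$, identity (\ref{300717_01}) reshapes to $(x\to y)\to 0\approx(0\to y)\to x$ and is then settled in the reduct by rewriting $(x\to y)'$ as $0\to(x\to y)$ via $(A7)$, then as $x\to(0\to y)$ via \ref{general_properties2}(\ref{071114_04}), and finally as $(0\to y)\to x$ via $(A7)$ again---so (\ref{300717_01}) needs no commutativity.

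The genuine obstacle, in every case, is identity (\ref{300717_03}), $0\to(x\to y)\approx 0\to(y\to x)$ (and, for $\mathcal A_7$, identity (\ref{300717_02})). After passing to $\mathcal A_i\cap\mathcal I_{2,0}$, identity (\ref{300717_03}) turns out to be equivalent to full commutativity $x\to y\approx y\to x$ of the reduct, because there $0\to(x\to y)$ can be brought to the form $(x\to y)'$ and $'$ is injective. For $\mathcal A_8,\mathcal A_{10}$ this commutativity drops out of the helper lemmas (for $\mathcal A_8$ already from $a\to b\approx 0\to(b\to a)$). For $\mathcal A_3,\mathcal A_5,\mathcal A_7$ I would obtain it from the involutive identity $y\approx(y\to 0)\to 0$, which displays an arbitrary element as a product $p\to q$: applying $(Ai)$ to $a\to\big((y\to 0)\to 0\big)$ slides $a$ across and yields $a\to y\approx y\to a$, once one has first pinned down the auxiliary fact $0\to x\approx x'$ (for $\mathcal A_5,\mathcal A_7$) or $x'\approx x$ (for $\mathcal A_3$) from $(Ai)$ together with \ref{general_properties2}(\ref{311014_03}) and \ref{general_properties_equiv}(\ref{TXX}). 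With commutativity of the reduct established, (\ref{300717_02}) and (\ref{300717_03}) follow there, and Theorem \ref{Transfer_Theo} lifts all three identities back to $\mathcal A_i$. I expect this commutativity step---recognizing the product representation $y\approx(y\to 0)\to 0$ and feeding it through the defining identity---to be the crux of the whole argument.
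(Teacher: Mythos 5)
Your overall architecture is sound and genuinely different in organization from the paper's: massaging each of (1)--(3) into the shape $[t_1\to t_2]\to t_3\approx[t_4\to t_5]\to t_6$ by one application of $(Ai)$ works in all fifteen cases, so proving the identities in $\mathcal A_i\cap\mathcal I_{2,0}$ and lifting with Theorem \ref{Transfer_Theo} at the end is a legitimate reorganization; your observation that in the involutive setting (3) amounts to commutativity of $\to$ (via $0\to z\approx z'$, which follows from (1) with $x:=0'$ and Lemma \ref{general_properties_equiv}(\ref{TXX}), plus injectivity of $'$) is correct and does not appear in the paper, which instead proves (3) by a separate direct computation in each case; and your treatments of $\mathcal A_5$, $\mathcal A_7$ and $\mathcal A_8$ do go through --- for $\mathcal A_7$ the reduct is commutative almost immediately, which is cleaner than the paper's long calculation, and your $\mathcal A_8$ collapse of (3) using Lemma \ref{230317_12}(\ref{230317_02}) and Lemma \ref{general_properties2}(\ref{311014_03}) is shorter than the paper's.

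However, the step you yourself identify as the crux fails for $\mathcal A_3$. The identity (A3) reads $x\to(y\to z)\approx(x\to z)\to y$: both sides have the same leftmost variable, so (A3), used as a rewrite rule, can never move the outer factor $a$ to the right of anything. Concretely, applying (A3) to $a\to\bigl((y\to 0)\to 0\bigr)$ yields $(a\to 0)\to(y\to 0)=a'\to y'$, which, once you have $x'\approx x$ in hand, is just $a\to y$ again --- not $y\to a$. So for $\mathcal A_3$ the "slide" produces a triviality, and (3) is left unproved. The repair stays inside your own framework: your derivation of $x'\approx x$ in $\mathcal A_3\cap\mathcal I_{2,0}$ is correct, and it gives $0'\approx 0$, hence $0\to w\approx w$ by Lemma \ref{general_properties_equiv}(\ref{TXX}); now specialize the \emph{outer} variable of (A3) to $0$ rather than keeping a general $a$ there: $y\to z\approx 0\to(y\to z)\approx(0\to z)\to y\approx z\to y$. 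This one line gives commutativity of the reduct, hence (3), and Transfer lifts it to $\mathcal A_3$. By contrast, the paper never goes through commutativity or $x'\approx x$ for $\mathcal A_3$ at all: it proves (3) directly in $\mathcal I$ by rewriting $0\to(a\to b)$ as $(0\to b)\to a$, expanding with the defining identity (I), and collapsing with Lemma \ref{general_properties2}(\ref{281114_01}). Finally, note that your $\mathcal A_{10}$ case is deferred to "assembling" Lemma \ref{230317_13}(a)--(c); that assembly is precisely where the paper spends most of its effort, so as written this is a sketch rather than a proof, although the ingredients you name are the right ones.
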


\begin{proof}
	Let $a,b \in A$.
\begin{itemize}
	\item Suppose $\mathbf A \in \mathcal A_3$.  Then
\noindent $(a \to b)' $
$\overset{  
}{=}  (a \to b) \to 0 $
$\overset{  (A3) 
}{=}  a \to (0 \to b) $, implying $\mathbf A \models (\ref{300717_01})$.

Observe that
\noindent $a' \to b $
$\overset{  
}{=}  (a \to 0) \to b $
$\overset{  (A3) 
}{=}  a \to (b \to 0) $
$\overset{  
}{=}  a \to b'. $ So, (\ref{300717_02}) holds in $\mathbf A$.
Also, 
\noindent $0 \to (a \to b) $
$\overset{  (A3) 
}{=}  (0 \to b) \to a $
$\overset{  (I) 
}{=}  [(a' \to 0) \to (b \to a)']' $
$\overset{  
}{=}  [a'' \to (b \to a)']' $
$\overset{  \ref{Transfer_Theo} 
}{=}  [a \to (b \to a)']' $
$\overset{  (\ref{Lem_230317_14}) 
}{=}  [a' \to (b \to a)]' $
$\overset{  \ref{general_properties2} (\ref{281114_01}) \, \& \, \ref{Transfer_Theo} 
}{=}  (b \to a)' $
$\overset{  \ref{general_properties_equiv} (\ref{TXX}) \, \& \, \ref{Transfer_Theo} 
}{=}  0' \to (b \to a)' $
$\overset{ \ref{Lem_230317_14}(\ref{300717_02}) 
}{=}{ 0'' \to (b \to a) }$
$\overset{  
}{=}  0 \to (b \to a),$ proving (\ref{300717_03}).

\item Assume that $\mathbf A \in \mathcal A_5$. 
Then
\noindent $(a \to b)' $
$\overset{  \ref{230317_11} (\ref{210317_06}) 
}{=}   0 \to (b \to a) $
$\overset{  
}{=}  0'' \to (b \to a) $
$\overset{  \ref{230317_11} (\ref{210317_05}) 
}{=}  0' \to (b \to a)' $
$\overset{  \ref{general_properties_equiv} (\ref{TXX}) \, \& \, \ref{Transfer_Theo} 
}{=}  (b \to a)' $
$\overset{  
}{=}  (b \to a) \to 0 $
$\overset{  (A5) 
}{=}  a \to (b \to 0) $
$\overset{  
}{=}  a \to b' $
$\overset{  \ref{230317_11} (\ref{210317_05}) 
}{=}  a' \to b $
$\overset{  \ref{general_properties_equiv} (\ref{TXX}) \, \& \, \ref{Transfer_Theo} 
}{=}  (0' \to a') \to b $
$\overset{  \ref{230317_11} (\ref{210317_05}) 
}{=}  (0 \to a'') \to b $
$\overset{  \ref{Transfer_Theo} 
}{=}  (0 \to a) \to b $
$\overset{(A5)  
}{=} a \to (0 \to b)$, proving (\ref{300717_01}).
(\ref{300717_02}) is immediate from Lemma \ref{230317_11} (\ref{210317_05}).
Next,  
\noindent $0 \to (a \to b) $
$\overset{  \ref{230317_11} (\ref{210317_06}) 
}{=}  (a \to b)' $
$\overset{  
}{=}  (a \to b) \to 0 $
$\overset{  (A5) 
}{=}  b \to (a \to 0) $
$\overset{  
}{=}  b \to a' $
$\overset{  \ref{230317_11} (\ref{210317_05}) 
}{=}  b' \to a $
$\overset{  \ref{Transfer_Theo} 
}{=}  (b' \to a)'' $
$\overset{  \ref{230317_11} (\ref{210317_05}) 
}{=}  (b \to a')'' $
$\overset{ \ref{230317_11} (\ref{210317_06}) 
}{=}  [0 \to (b \to a')]' $
$\overset{  \ref{230317_11} (\ref{210317_07}) 
}{=}  [b \to (0 \to a')]' $
$\overset{  \ref{230317_11} (\ref{210317_05}) 
}{=}  [b \to (0' \to a)]' $
$\overset{  \ref{general_properties_equiv} (\ref{TXX}) \, \& \, \ref{Transfer_Theo} 
}{=}  (b \to a)' $
$\overset{  \ref{230317_11} (\ref{210317_06}) 
}{=}  0 \to (b \to a) $, proving (\ref{300717_03}).

\item Assume that $\mathbf A \in \mathcal A_7$.  Then
\noindent $(a \to b)' $
$\overset{  \ref{Transfer_Theo} 
}{=}  (a \to b)''' $
$\overset{  
}{=}  [(a \to b) \to 0]'' $
$\overset{  (A7) 
}{=}  [0 \to (a \to b)]'' $
$\overset{  \ref{general_properties2} (\ref{071114_04}) \, \& \, \ref{Transfer_Theo} 
}{=}  [a \to (0 \to b)]'' $
$\overset{  (A7) 
}{=}  [(0 \to b) \to a]'' $
$\overset{  \ref{Transfer_Theo} 
}{=}  (0 \to b) \to a $
$\overset{  (A7) 
}{=}  a \to (0 \to b),$ proving (\ref{300717_01}).

Next,  
\noindent $a' \to b $
$\overset{  \ref{Transfer_Theo} 
}{=}  [a' \to b]'' $
$\overset{  
}{=}  [(a' \to b) \to 0]' $
$\overset{  (A7) 
}{=}  [0 \to (a' \to b)]' $
$\overset{  \ref{Transfer_Theo} 
}{=}  [0 \to (a' \to b'')]'  $
$\overset{  \ref{general_properties2} (\ref{191114_05}) \, \& \, \ref{Transfer_Theo} 
}{=}  [0 \to (a \to b')']' $
$\overset{  
}{=}  [0 \to ((a \to b') \to 0)]' $
$\overset{  (A7) 
}{=}  [0 \to (0 \to (a \to b'))]' $
$\overset{  \ref{general_properties2} (\ref{311014_03}) \, \& \, \ref{Transfer_Theo} 
}{=}  [0 \to (a \to b')]' $
$\overset{  
}{=}  [0 \to (a \to b')] \to 0 $
$\overset{  (A7) 
}{=}  [(a \to b') \to 0] \to 0  $
$\overset{  
}{=}  (a \to b')'' $
$\overset{  
}{=}  (a \to (b \to 0))'' $
$\overset{  (A7) 
}{=}  ((b \to 0) \to a)'' $
$\overset{  \ref{Transfer_Theo} 
}{=}  (b \to 0) \to a $
$\overset{  (A7) 
}{=}  a \to (b \to 0) $
$\overset{  
}{=}  a \to b',$ proving (\ref{300717_02}).
Finally, observe that 
\noindent $0 \to (a \to b) $
$\overset{  (A7) 
}{=}  (a \to b) \to 0 $
$\overset{  \ref{Transfer_Theo} 
}{=}  (a'' \to b) \to 0 $
$\overset{  
}{=}  [(a' \to 0) \to b] \to 0 $
$\overset{  (A7) 
}{=}  [b \to (a' \to 0)] \to 0 $
$\overset{  
}{=}  [b \to a''] \to 0 $
$\overset{  \ref{Transfer_Theo} 
}{=}  [b \to a] \to 0 $
$\overset{  (A7) 
}{=}  0 \to (b \to a),$ proving (\ref{300717_03}).

\item Let $\mathbf A \in \mathcal A_8$.
We will prove (\ref{300717_02}) first. Now,
\noindent $a \to b' $
$\overset{  \ref{230317_12} (\ref{230317_02}) 
}{=}  0 \to (b' \to a) $
$\overset{  \ref{Transfer_Theo} 
}{=}  0 \to (b' \to a'') $
$\overset{  \ref{230317_12} (\ref{230317_01}) 
}{=}  0 \to (b'' \to a'') $
$\overset{  (A8) 
}{=}  (a'' \to 0) \to b'' $
$\overset{  \ref{Transfer_Theo} 
}{=}  (a'' \to 0) \to b $
$\overset{  
}{=}  a''' \to b $
$\overset{  \ref{Lem_200317_02} 
}{=}  a' \to b,$ proving (\ref{300717_02}).

Notice that
\noindent $a \to (0 \to b) $
$\overset{  (A8) 
}{=}  (b \to a) \to 0 $
$\overset{  
}{=}  (b \to a)' $
$\overset{  \ref{Transfer_Theo} 
}{=}  (b \to a)''' $
$\overset{  
}{=}  (b \to a)'' \to 0 $
$\overset{ (\ref{300717_02})                       
}{=} (b \to a)' \to 0' $
$\overset{  \ref{general_properties_equiv} (\ref{TXX}) \, \& \, \ref{Transfer_Theo} 
}{=}  [0' \to (b \to a)'] \to 0' $
$\overset{ (\ref{300717_02})        
}{=}   [0'' \to (b \to a)] \to 0' $
$\overset{  
}{=}  [0 \to (b \to a)] \to 0' $
$\overset{  \ref{general_properties2} (\ref{071114_04}) \, \& \, \ref{Transfer_Theo} 
}{=}  [b \to (0 \to a)] \to 0' $
$\overset{  (A8) 
}{=}  [(a \to b) \to 0] \to 0' $
$\overset{  
}{=}  (a \to b)' \to 0' $
$\overset{ (\ref{300717_02})                       
}{=}  (a \to b) \to 0'' $
$\overset{  
}{=}  (a \to b) \to 0 $
$\overset{  
}{=}  (a \to b)',$ proving (\ref{300717_01}).

The identity 
\begin{equation} \label{230317_05}
0 \to (x \to y) \approx (x \to y)'
\end{equation}
holds in $\mathbf A$ since
\noindent $0 \to (a \to b) $
$\overset{  
}{=}  0'' \to (a \to b) $
$ \overset{ (\ref{300717_02})            
}{=}  0' \to (a \to b)' $
$\overset{  \ref{general_properties_equiv} (\ref{TXX}) \, \& \, \ref{Transfer_Theo} 
}{=}  (a \to b)' $.

Then 
\noindent $0 \to (a \to b) $
$\overset{  (\ref{230317_05}) 
}{=}  (a \to b)' $
$\overset{  
}{=}  (a \to b) \to 0 $
$\overset{  (A8) 
}{=}  b \to (0 \to a) $
$ \overset{ (\ref{300717_01})   
}{=}  (b \to a)' $
$\overset{  (\ref{230317_05}) 
}{=}  0 \to (b \to a),$ proving (\ref{300717_03}).

\item Assume that $\mathbf A \in \mathcal A_{10}$.
Hence 
$$
\begin{array}{lcll}
a' \to b & = & a' \to b'' & \mbox{by \ref{Transfer_Theo}} \\
& = & (a' \to b')' & \mbox{by \ref{230317_13} (\ref{230317_08})} \\
& = & (a' \to b') \to 0 & \mbox{} \\
& = & 0 \to (b' \to a') & \mbox{by (A10)} \\
& = & 0 \to (b' \to (a \to 0)) & \mbox{} \\
& = & 0 \to ((0 \to a) \to b') & \mbox{by (A10)} \\
& = & (b' \to (0 \to a)) \to 0 & \mbox{by (A10)} \\
& = & (b' \to (0 \to a))' & \mbox{} \\
& = & ((b \to 0) \to (0 \to a))' & \mbox{} \\
& = & [(0 \to a) \to (0 \to b)]' & \mbox{by (A10)} \\
& = & [0 \to (a \to b)]' & \mbox{by \ref{general_properties2} (\ref{071114_04}) \, \& \, (\ref{311014_06}) and by \ref{Transfer_Theo}} \\
& = & [(b \to a) \to 0]' & \mbox{by (A10)} \\
& = & (b \to a)'' & \mbox{} \\
& = & a \to b' & \mbox{by \ref{230317_13} (\ref{230317_07})}, 
\end{array}
$$
proving (\ref{300717_02}).

Consider
\noindent $a \to (0 \to b) $
$\overset{  (A10) 
}{=}  (b \to 0) \to a $
$\overset{  (I) 
}{=}  [(a' \to b) \to (0 \to a)']' $
$\overset{  \ref{230317_13} (\ref{230317_08}) 
}{=}  [(a' \to b) \to (0 \to a')]' $
$\overset{ (\ref{300717_02})     
}{=}  [(a' \to b) \to (0' \to a)]' $
$\overset{  \ref{general_properties_equiv} (\ref{TXX}) \, \& \, \ref{Transfer_Theo} 
}{=}  [(a' \to b) \to a]' $
$\overset{ (\ref{300717_02})                            
}{=}  [(a \to b') \to a]' $
$\overset{  \ref{general_properties2} (\ref{291014_10}) \, \& \, \ref{Transfer_Theo} 
}{=}  [(0 \to b') \to a]' $
$\overset{  (A10) 
}{=}  [a \to (b' \to 0)]' $
$\overset{  
}{=}  [a \to b'']' $
$\overset{  \ref{Transfer_Theo} 
}{=}  (a \to b)' $, proving (\ref{300717_01}).

To finish off the proof,
\noindent $0 \to (a \to b) $
$\overset{  (A10) 
}{=}  (b \to a) \to 0 $
$\overset{  \ref{230317_13} (\ref{230317_08}) 
}{=}  b \to a' $
$\overset{ (\ref{300717_02})      
}{=}  b' \to a $
$\overset{  
}{=}  (b \to 0) \to a $
$\overset{  (A10) 
}{=}  a \to (0 \to b) $
$\overset{ (\ref{300717_01})                
}{=} (a \to b)' $
$\overset{  
}{=}  (a \to b) \to 0 $
$ \overset{  (A10) 
}{=}  0 \to (b \to a) $.

\end{itemize}
\end{proof}

\begin{Theorem} \label{Theo_280317_05}
$\mathcal A_3 = \mathcal A_5 = \mathcal A_7 = \mathcal A_8 = \mathcal A_{10}$.
\end{Theorem}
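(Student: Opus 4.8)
The plan is to show that every algebra in each of the five varieties $\mathcal A_j$, $j \in \{3,5,7,8,10\}$, is in fact of type 1; once this is established, Theorem \ref{Theo_230317_15} immediately yields that such an algebra satisfies \emph{all} of $(A1)$--$(A14)$, and in particular all five of $(A3)$, $(A5)$, $(A7)$, $(A8)$, $(A10)$. This will force each $\mathcal A_j$ (for $j$ in this set) to be contained in each of the others, and hence all five to coincide.

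The key observation is that for each $j \in \{3,5,7,8,10\}$ the defining identity $(Aj)$ is \emph{itself} an instance of the schema $(E4)$, namely $x \to (y \to z) \approx (p(x) \to p(y)) \to p(z)$ for a suitable permutation $p$ of $\{x,y,z\}$. Indeed, reading off the triple $(p(x),p(y),p(z))$ from the right-hand side of each identity, $(A3)$ corresponds to the transposition $(y\,z)$, $(A5)$ to $(x\,y)$, $(A7)$ to the cycle $(x\,y\,z)$, $(A8)$ to the cycle $(x\,z\,y)$, and $(A10)$ to the transposition $(x\,z)$. Thus any $\mathbf A \in \mathcal A_j$ automatically satisfies $(E4)$.

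It remains to verify the other three defining identities of type 1, and this is exactly the content of Lemma \ref{Lem_230317_14}: any $\mathbf A \in \mathcal A_3 \cup \mathcal A_5 \cup \mathcal A_7 \cup \mathcal A_8 \cup \mathcal A_{10}$ satisfies $(E1)$ $(x \to y)' \approx x \to (0 \to y)$, $(E2)$ $x' \to y \approx x \to y'$, and $(E3)$ $0 \to (x \to y) \approx 0 \to (y \to x)$. Combining this with the observation of the previous paragraph, every such $\mathbf A$ satisfies all four of $(E1)$--$(E4)$ and is therefore of type 1.

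Finally, fix $j, k \in \{3,5,7,8,10\}$ and let $\mathbf A \in \mathcal A_j$. By the above $\mathbf A$ is of type 1, so Theorem \ref{Theo_230317_15} gives $\mathbf A \models (Ak)$, whence $\mathbf A \in \mathcal A_k$. Since $j$ and $k$ were arbitrary, $\mathcal A_j \subseteq \mathcal A_k$ for every pair, and the five varieties are equal. There is essentially no computational obstacle at this stage, as all the difficult work has already been carried out in Lemma \ref{Lem_230317_14} and Theorem \ref{Theo_230317_15}; the only point requiring care is the bookkeeping in the first step, i.e.\ correctly matching each identity $(Aj)$ with the permutation realizing it in the schema $(E4)$, after which the conclusion is a direct assembly of the two cited results.
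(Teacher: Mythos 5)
Your proposal is correct and follows essentially the same route as the paper: the paper likewise invokes Lemma \ref{Lem_230317_14} to conclude that any $\mathbf A$ in the union of the five varieties is of type 1 and then applies Theorem \ref{Theo_230317_15}. Your explicit verification that each identity $(Aj)$, $j \in \{3,5,7,8,10\}$, realizes the schema $(E4)$ via the permutations $(y\,z)$, $(x\,y)$, $(x\,y\,z)$, $(x\,z\,y)$, $(x\,z)$ respectively is accurate and merely makes explicit a step the paper leaves implicit.
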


\begin{proof}
Let $\mathbf A \in \mathcal A_3 \cup \mathcal A_5 \cup \mathcal A_7 \cup \mathcal A_8 \cup \mathcal A_{10}$. By Lemma \ref{Lem_230317_14} we have that $\mathbf A$ is of type 1. Then, using Theorem \ref{Theo_230317_15}, $\mathbf A \in \mathcal A_j$ for all $j \in \{3,5,7,8,10\}$.
\end{proof}

\begin{Lemma} \label{270317_01}
	If $\mathbf A \in \mathcal A_{13}$ then $\mathbf A$ satisfies
	\begin{enumerate}[(a)]
		\item $(x \to y)' \approx (0 \to x) \to y$ \label{270317_02}
		\item $(x \to y)' \approx x' \to y'$ \label{270317_03}
		\item $(x \to y)' \approx (0 \to y) \to x'$ \label{270317_04}
		\item $(x \to y)' \approx (x \to y)''$ \label{270317_05}
		\item $(x \to y)' \approx (y \to x)'$ \label{270317_06}
	\end{enumerate}
\end{Lemma}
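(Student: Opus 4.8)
The plan is to reduce everything to the involutive case and then read off the five identities from two structural facts: the \emph{stability} law $(x\to y)'\approx x\to y$ and \emph{commutativity} $x\to y\approx y\to x$. First I observe that each of (a)--(e) has both sides expressible in the shape $[s\to t]\to u$ for suitable terms: for instance $(x\to y)'=(x\to y)\to 0$, $\ x'\to y'=(x\to 0)\to(y\to 0)$, $\ (0\to y)\to x'=(0\to y)\to(x\to 0)$, and $(x\to y)''=((x\to y)\to 0)\to 0$. Hence, by the Transfer Theorem~\ref{Transfer_Theo} (with $\mathcal V=\mathcal A_{13}$), it suffices to prove (a)--(e) in $\mathcal A_{13}\cap\mathcal I_{2,0}$, where in addition $x''\approx x$ holds and the identities of Lemmas~\ref{general_properties} and~\ref{general_properties2} are available. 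I work there from now on, with $a,b,c\in A$.

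Two substitution instances of (A13) are used constantly. Taking $(x,y,z)=(0,a,b)$ gives $(0\to a)\to b=(a\to b)\to 0=(a\to b)'$, which is already item (a) (and in fact needs no appeal to $\mathcal I_{2,0}$). Taking $z=0$ gives $(a\to b)'=(b\to 0)\to a=b'\to a$. Combining the latter with $u''\approx u$ yields $(\star)$: $\ a\to b=(a\to b)''=(b'\to a)'=a'\to b'$.

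Next I establish the crux, the stability law (F1): $(a\to b)'\approx a\to b$. Putting $x=0$ in the defining axiom (I) and simplifying $z'\to 0=z''=z$ gives $(0\to b)\to c=[c\to(b\to c)']'$; by item (a) the left side is $(b\to c)'$, so $(b\to c)'=[c\to(b\to c)']'$, and applying $'$ (with $u''\approx u$) produces the key identity $b\to c\approx c\to(b\to c)'$. Feeding this into the chain
$(a\to b)'=b'\to a=a\to(b'\to a)'=a\to(a'\to b')=a\to(a\to b)=a\to b$,
whose steps use the two displayed instances of (A13), the key identity, $(\star)$, and Lemma~\ref{general_properties2}\,(\ref{031114_04}), proves (F1). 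Items (b), (c), (d) are then immediate: (b) is $(a\to b)'=a\to b\overset{(\star)}{=}a'\to b'$; for (c), using item (a) and the $z=0$ instance, $(0\to b)\to a'=(b\to a')'=a\to b=(a\to b)'$; and (d) is $(a\to b)'=a\to b=(a\to b)''$.

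The remaining and genuinely substantive item is (e). Given (F1) it is equivalent to commutativity $a\to b\approx b\to a$, so that is what must be proved. I plan the following chain. Lemma~\ref{general_properties2}\,(\ref{291014_10}) together with the relation $(0\to a)\to b=a\to b$ (from item (a) and (F1)) gives $(b\to a)\to b=a\to b$; by (A13) the left side equals $(a\to b)\to b$, so $(a\to b)\to b=a\to b$, and a further application of (A13) turns this into $(b\to b)\to a=a\to b$. Setting $a=0$ here and invoking (F1) on $(b\to b)'$ yields $b\to b=0\to b$; substituting back gives $(0\to b)\to a=a\to b$, while $(0\to b)\to a=b\to a$ by item (a) and (F1). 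Hence $a\to b=b\to a$, and (e) follows from (F1). I expect (F1) and this commutativity step to be the main obstacle: unlike item (a) and the preliminary instances, which are mere rearrangements of (A13), both require the full axiom (I) and the absorption identities of $\mathcal I_{2,0}$ in an essential way, and it is only after commutativity is in hand that the symmetric-looking identity (e) can be closed.
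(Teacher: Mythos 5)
Your proposal is correct, but it is organized quite differently from the paper's proof, so a comparison is worth recording. The paper works in $\mathcal A_{13}$ throughout and invokes the Transfer Theorem locally at each step, proving the five items strictly in the order (a)--(e): (b) comes from $0'\to u\approx u$ together with Lemma~\ref{general_properties2}\,(\ref{191114_05}); (c) from (a) and (b); (d) from (c) via a fresh application of axiom (I) and Lemma~\ref{general_properties2}\,(\ref{281114_01}); and (e) by a chain through (c), (b), (d) and (a) that never asserts commutativity. You instead make a single global reduction to $\mathcal A_{13}\cap\mathcal I_{2,0}$ (legitimate, since each of (a)--(e) has both sides in the shape $[t_1\to t_2]\to t_3$ required by Theorem~\ref{Transfer_Theo}), and then pivot on the stability law $(x\to y)'\approx x\to y$ --- which is exactly the involutive form of item (d) --- established early from axiom (I) with $x=0$, the two substitution instances of (A13), and Lemma~\ref{general_properties2}\,(\ref{031114_04}). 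From that law, (b), (c), (d) are one-liners, and (e) is obtained by proving the strictly stronger fact that $\mathcal A_{13}\cap\mathcal I_{2,0}$ is commutative, using Lemma~\ref{general_properties2}\,(\ref{291014_10}). I checked each step: the instances $(a\to b)'\approx(0\to a)\to b$ and $(a\to b)'\approx b'\to a$, the identity $(\star)\colon a\to b\approx a'\to b'$, the key identity $b\to c\approx c\to(b\to c)'$, and the commutativity chain are all valid in $\mathcal A_{13}\cap\mathcal I_{2,0}$. Your route buys a cleaner logical structure and exposes more of the structure of $\mathcal A_{13}\cap\mathcal I_{2,0}$ (it is commutative and $'$ fixes every element of the form $x\to y$), at the cost of proving more than the lemma asks for; the paper's route stays closer to the raw identities and establishes only what is literally stated.
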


\begin{proof}
Let us consider $a,b \in A$. 
	\begin{enumerate}[(a)]
		\item 
\noindent $(a \to b)' $
$\overset{  
}{=}  (a \to b) \to 0 $
$\overset{  (A13) 
}{=}  (b \to 0) \to a $
$\overset{  (A13) 
}{=}  (0 \to a) \to b $.
Hence $\mathbf{A} \models {\rm(a)}$.

\item 
Observe that
\noindent $(a \to b)' $
$\overset{  \ref{Transfer_Theo} 
}{=}  0' \to (a \to b)' $
$\overset{  
}{=}  (0  \to 0) \to (a \to b)' $
$\overset{  (A13) 
}{=}  [0 \to (a \to b)'] \to 0 $
$ \overset{  (I20) \, \& \, \ref{Transfer_Theo}
}{=}   [0 \to (a \to b'')'] \to 0 $
$ \overset{  \ref{general_properties2} (\ref{191114_05}) \, \& \, \ref{Transfer_Theo} 
}{=}   [0 \to (a' \to b')] \to 0 $
$\overset{  (A13) 
}{=}  [(a' \to b') \to 0] \to 0 $
$\overset{  
}{=}  (a' \to b')'' $
$\overset{  \ref{Transfer_Theo} 
}{=} a' \to b'  $



\item 
Observe
\noindent $(a \to b)' $
$\overset{  (\ref{270317_02}) 
}{=}  (0 \to a) \to b $
$\overset{  \ref{Transfer_Theo} 
}{=}  (0 \to a)'' \to b  $
$\overset{  (\ref{270317_03}) 
}{=}  (0' \to a')' \to b $
$\overset{  \ref{Transfer_Theo} 
}{=}  a'' \to b $
$\overset{  (A13) 
}{=}  (0 \to b) \to a'. $

\item 
Note that
\noindent $(a \to b)' $
$\overset{  (\ref{270317_04}) 
}{=}  (0 \to b) \to a' $
$\overset{  (I) 
}{=}  [(a'' \to 0) \to (b \to a')']' $
$\overset{  
}{=}  [a''' \to (b \to a')']' $
$\overset{  \ref{Lem_200317_02} 
}{=}  [a' \to (b \to a')']' $
$\overset{  (\ref{270317_03}) 
}{=}  [a' \to (b' \to a'')]' $
$\overset{  \ref{Transfer_Theo} 
}{=}  [a' \to (b' \to a)]' $
$\overset{  \ref{general_properties2} (\ref{281114_01}) \, \& \, \ref{Transfer_Theo} 
}{=}  (b' \to a)' $
$\overset{  (A13) 
}{=}  ((0 \to a) \to b)' $
$\overset{  (A13) 
}{=}  ((a \to b) \to 0)' $
$\overset{  
}{=}  (a \to b)''. $

\item We have
\noindent $(b \to a)' $
$\overset{  (\ref{270317_04}) 
}{=}  (0 \to a) \to b' $
$\overset{  \ref{Transfer_Theo} 
}{=}  [(0 \to a) \to b']'' $
$\overset{  (\ref{270317_03}) 
}{=}  [(0 \to a)' \to b'']' $
$\overset{  (\ref{270317_05}) 
}{=}  [(0 \to a)'' \to b'']' $
$\overset{  \ref{Transfer_Theo} 
}{=}  [(0 \to a) \to b]' $
$\overset{  (\ref{270317_05}) 
}{=}  [(0 \to a) \to b]'' $
$\overset{  \ref{Transfer_Theo} 
}{=}  (0 \to a) \to b $
$\overset{  (\ref{270317_02}) 
}{=}  (a \to b)' .$
	\end{enumerate}
\end{proof}

\begin{Theorem} \label{Theo_280317_03}
	$\mathcal A_{11} = \mathcal A_{12} = \mathcal A_{13}$.
\end{Theorem}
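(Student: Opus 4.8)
The plan is to show the three varieties coincide by establishing the mutual inclusions through a common intermediate property, mirroring the strategy already used for Theorem \ref{Theo_280317_05}. The key observation is that Lemma \ref{Lem_280317_01} provides a bridge: if an algebra in $\mathcal I$ satisfies the commutativity-under-prime identity $(x \to y)' \approx (y \to x)'$, then it satisfies $(x \to y) \to z \approx (y \to x) \to z$, which is precisely identity (A12). Since Lemma \ref{270317_01}(e) shows that every $\mathbf A \in \mathcal A_{13}$ satisfies $(x \to y)' \approx (y \to x)'$, I would first conclude $\mathcal A_{13} \subseteq \mathcal A_{12}$ directly by invoking Lemma \ref{Lem_280317_01}.

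Next I would handle the relationship between (A11), (A12), and (A13) by reducing them to one another via renaming of variables combined with the already-derived properties. First I would show $\mathcal A_{12} \subseteq \mathcal A_{13}$: starting from $(a \to b) \to c$, apply (A12) to commute the inner pair, then one needs to convert $(b \to a) \to c$ into $(c \to b) \to a$. Here I expect to use the properties analogous to those in Lemma \ref{270317_01} — in particular that (A12) forces $(x \to y)' \approx (y \to x)'$-type symmetrization together with the double-negation collapse from Lemma \ref{lemaOfIZI}(\ref{lemaOfIZI3_4}) and Theorem \ref{Transfer_Theo}. For the inclusion involving (A11), which asserts $(x \to y) \to z \approx (x \to z) \to y$, I would combine (A12) (swapping the first two arguments) with (A13) (the full cyclic-type reshuffle $(x \to y) \to z \approx (y \to z) \to x$) to realize the transposition of the second and third arguments, since any transposition together with a suitable $3$-cycle generates the needed rearrangement on the three argument slots.

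The cleanest route, and the one I would actually write, is to prove all three inclusions go through a single canonical form. Concretely, I would show that each of $\mathcal A_{11}, \mathcal A_{12}, \mathcal A_{13}$ satisfies the three defining identities of ``type 1'' (equations (E1)--(E4)), by deriving for each variety the analogues of Lemma \ref{Lem_230317_14}'s conclusions: $(x \to y)' \approx x \to (0 \to y)$, $x' \to y \approx x \to y'$, and $0 \to (x \to y) \approx 0 \to (y \to x)$, plus the witnessing permutation identity (E4). Lemma \ref{270317_01} already supplies the needed raw identities for $\mathcal A_{13}$; I would establish the parallel lemmas for $\mathcal A_{11}$ and $\mathcal A_{12}$. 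Once all three are shown to be of type 1, Theorem \ref{Theo_230317_15} immediately gives that each satisfies (Aj) for all $1 \le j \le 14$, hence in particular all of (A11), (A12), (A13), forcing the three varieties to coincide.

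The main obstacle I anticipate is verifying the type-1 equations for $\mathcal A_{11}$ and $\mathcal A_{12}$ separately, since — unlike (A13), whose right-hand side $(z \to y) \to x$ directly exposes a prime-commutation symmetry exploitable through $z = 0$ specializations — identities (A11) and (A12) only permute two of the three slots. I expect the derivation of $(x \to y)' \approx x \to (0 \to y)$ to be the delicate step: it will require substituting $0$ for appropriate variables, repeatedly applying Lemma \ref{general_properties2} (particularly (\ref{071114_04}) and (\ref{311014_03})) together with the double-prime collapse of Lemma \ref{lemaOfIZI}, and lifting $\mathcal I_{2,0}$-facts back to $\mathcal I$ via Theorem \ref{Transfer_Theo}. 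If the direct type-1 verification for (A11) proves too cumbersome, the fallback is the inclusion-chaining argument of the second paragraph, establishing $\mathcal A_{13} \subseteq \mathcal A_{12} \subseteq \mathcal A_{11} \subseteq \mathcal A_{13}$ by explicit term rearrangements.
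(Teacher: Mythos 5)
Your primary route---showing that each of $\mathcal A_{11},\mathcal A_{12},\mathcal A_{13}$ is of type 1 and invoking Theorem \ref{Theo_230317_15}---cannot work. If $\mathcal A_{11}$ were of type 1, Theorem \ref{Theo_230317_15} would give $\mathcal A_{11}\subseteq\mathcal A_{3}$, and since $\mathcal A_3$ is of type 1 by Lemma \ref{Lem_230317_14} we also have $\mathcal A_3\subseteq\mathcal A_{11}$; hence $\mathcal A_{11}=\mathcal A_3$. But Theorem \ref{mainTheorem} exhibits explicit finite algebras witnessing $\mathcal A_3\subsetneq\mathcal A_2\subsetneq\mathcal A_{11}$, so $\mathcal A_{11}$ is strictly larger than $\mathcal A_3$ and is \emph{not} of type 1. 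The structural reason is that condition (E\ref{typeI_equation4}) relates the two bracketings, whereas each of (A11)--(A14) constrains only terms of the shape $(\star\to\star)\to\star$; no identity mixing the bracketings is derivable from them, which is exactly why these varieties sit strictly above $\mathcal A_3$ in the poset.

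Your fallback---a cycle of inclusions---is the correct shape and is what the paper does ($\mathcal A_{11}\subseteq\mathcal A_{12}\subseteq\mathcal A_{13}\subseteq\mathcal A_{11}$). Your derivation of $\mathcal A_{13}\subseteq\mathcal A_{12}$ from Lemma \ref{270317_01}(\ref{270317_06}) and Lemma \ref{Lem_280317_01} is correct, and your observation that (A12) and (A13) together yield (A11) by composing slot permutations is sound (the paper closes the loop this way, applying (A13) twice and then commuting the inner pair via Lemma \ref{270317_01}(\ref{270317_06}) and Theorem \ref{Transfer_Theo}; it also gets $\mathcal A_{11}\subseteq\mathcal A_{12}$ cheaply by rewriting $a$ as $0'\to a$ in the first slot, a trick you do not mention). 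The genuine gap is the load-bearing inclusion $\mathcal A_{12}\subseteq\mathcal A_{13}$, which you only gesture at. In the paper this is a substantial computation: expand $(a\to b)\to c$ twice using (I), then repeatedly apply (A12) together with Lemma \ref{general_properties2} (\ref{281114_01}), (\ref{291014_10}), (\ref{281014_05}), Lemma \ref{general_properties_equiv}(\ref{TXX}) and Theorem \ref{Transfer_Theo} to collapse the result to $(b\to c)\to a$. Saying you ``expect to use properties analogous to Lemma \ref{270317_01}'' does not supply this derivation---those analogues for $\mathcal A_{12}$ would themselves have to be proved---so the proposal is incomplete precisely at its central step.
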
 

\begin{proof}Let us consider $\mathbf A \in \mathcal A_{11}$ and $a,b,c \in A$. Hence
\noindent $(a \to b) \to c $
$\overset{  \ref{general_properties_equiv} (\ref{TXX}) \, \& \, \ref{Transfer_Theo} 
}{=}  ((0' \to a) \to b) \to c $
$\overset{  (A11) 
}{=}  ((0' \to b) \to a) \to c $
$\overset{  \ref{general_properties_equiv} (\ref{TXX}) \, \& \, \ref{Transfer_Theo}. 
}{=}  (b \to a) \to c $.
Hence,  $\mathbf A \in \mathcal A_{12}$, implying $A_{11} \subseteq A_{12}.$

Now assume that $\mathbf A \in \mathcal A_{12}$ and $a,b,c \in A$. Then
\noindent $(a \to b) \to c $
$\overset{  (I) 
}{=}  [(c' \to a) \to (b \to c)']' $
$\overset{  (I) 
}{=}  \{[(b \to c)'' \to c'] \to [a \to (b \to c)']'\}'' $
$\overset{   \ref{Transfer_Theo} 
}{=}  \{[(b \to c) \to c'] \to [a \to (b \to c)']'\}'' $
$\overset{  (A12) 
}{=}  \{[c' \to (b \to c)] \to [a \to (b \to c)']'\}'' $
$\overset{  \ref{general_properties2} (\ref{281114_01}) \, \& \, \ref{Transfer_Theo} 
}{=}  \{(b \to c) \to [a \to (b \to c)']'\}'' $
$\overset{  (A12) 
}{=}  \{[a \to (b \to c)']' \to (b \to c)\}'' $
$\overset{  
}{=}  \{[[a \to (b \to c)'] \to 0] \to (b \to c)\}'' $
$\overset{  (A12) 
}{=}  \{[0 \to [a \to (b \to c)']] \to (b \to c)\}'' $
$\overset{  \ref{general_properties2} (\ref{291014_10}) \, \& \, \ref{Transfer_Theo} 
}{=}  \{[(b \to c) \to [a \to (b \to c)']] \to (b \to c)\}'' $
$\overset{  \ref{general_properties2} (\ref{281114_01}) \, \& \, \ref{Transfer_Theo} 
}{=}  \{[a \to (b \to c)'] \to (b \to c)\}'' $
$\overset{  \ref{general_properties2} (\ref{281014_05}) \, \& \, \ref{Transfer_Theo} 
}{=}  \{[a \to 0'] \to (b \to c)\}'' $
$\overset{  (A12) 
}{=}  \{[0' \to a] \to (b \to c)\}'' $
$\overset{  \ref{general_properties_equiv} (\ref{TXX}) \, \& \, \ref{Transfer_Theo} 
}{=}  \{a \to (b \to c)\}'' $
$\overset{  (A12) 
}{=}  \{(b \to c) \to a\}'' $
$\overset{   \ref{Transfer_Theo} 
}{=}  (b \to c) \to a $, which implies that
 $A_{12} \subseteq A_{13}.$
 
If $\mathbf A \in \mathcal A_{13}$ and $a,b,c \in A$, then
\noindent $(a \to b) \to c $
$\overset{  (A13) 
}{=}  (b \to c) \to a $
$\overset{  (A13) 
}{=}  (c \to a) \to b $
$\overset{  \ref{Transfer_Theo} 
}{=}  (c \to a)'' \to b $
$\overset{  \ref{270317_01} (\ref{270317_06}) 
}{=}  (a \to c)'' \to b $
$\overset{  \ref{Transfer_Theo} 
}{=}  (a \to c) \to b $,
concluding that
$A_{13} \subseteq A_{11}.$
\end{proof}

\section{Main Theorem} \label{main_theorem_section}



In this section we will prove our main theorem.  But first we need one more lemma.

\begin{Lemma} \label{Lem_280317_02}
	If $\mathbf A \in \mathcal A_2 \cup \mathcal A_6 \cup \mathcal A_9$ then $\mathbf A \in \mathcal A_{11}$.
\end{Lemma}

\begin{proof}
We will see that $\mathbf A \models (x \to y)' \approx (y \to x)'$. 

Let $a,b \in A$.
\begin{itemize}
	\item If $\mathbf A \in \mathcal A_2$,
\noindent $(a \to b) \to 0 $
$\overset{  \ref{general_properties_equiv} (\ref{TXX}) \, \& \, \ref{Transfer_Theo} 
}{=}  (0' \to (a \to b)) \to 0 $
$\overset{  (A2) 
}{=}  (0' \to (b \to a)) \to 0 $
$\overset{  \ref{general_properties_equiv} (\ref{TXX}) \, \& \, \ref{Transfer_Theo}
}{=}  (b \to a) \to 0 $.

\item If $\mathbf A \in \mathcal A_6$,
\noindent $(a \to b) \to 0 $
$\overset{  \ref{general_properties_equiv} (\ref{TXX}) \, \& \, \ref{Transfer_Theo} 
}{=}   (0' \to (a \to b)) \to 0 $
$\overset{  (A6) 
}{=}  (a \to (b \to 0')) \to 0 $
$\overset{  (A6) 
}{=}  (b \to (0' \to a)) \to 0  $
$\overset{  \ref{general_properties_equiv} (\ref{TXX}) \, \& \, \ref{Transfer_Theo} 
}{=}  (b \to a) \to 0 $.

\item  If $\mathbf A \in \mathcal A_9$, then


$$
\begin{array}{lcll}
(a \to b)' & = & (a \to b) \to 0 & \mbox{} \\
& = & (a \to (0' \to b)) \to 0 & \mbox{by \ref{general_properties_equiv} (\ref{TXX})  and  \ref{Transfer_Theo}} \\
& = & (b \to (0' \to a)) \to 0 & \mbox{by (A9)} \\
& = &  (b \to a) \to 0 & \mbox{by \ref{general_properties_equiv} (\ref{TXX})  and  \ref{Transfer_Theo}} \\
& = &  (b \to a)' & \mbox{} 
\end{array}
$$
\end{itemize}

Now, apply Lemma \ref{Lem_280317_01}, to get $\mathbf A \in A_{12}$. Therefore, using Theorem \ref{Theo_280317_03},  we conclude $\mathbf A \in \mathcal A_{11}$. 

\end{proof}

We are now ready to present the main theorem of this paper.
\begin{Theorem} \label{mainTheorem} We have
\begin{enumerate}[{\rm(a)}]
	\item The following are the $8$ subvarieties of $\mathcal I$ of associative type that are distinct from each other. \label{280317_07}
	$$\mathcal A_1, \mathcal A_2, \mathcal A_3, \mathcal A_4,  \mathcal A_6, \mathcal A_9, \mathcal A_{11} \mbox{ and } \mathcal A_{14}.$$
	\item They satisfy the following relationships: \label{280317_06}
	\begin{enumerate}[1.]
		\item $\mathcal{SL} \subset \mathcal A_3 \subset \mathcal A_4$, 
		\item $\mathcal{BA} \subset \mathcal A_4 \subset \mathcal I$, 
		\item $\mathcal A_3 \subset \mathcal A_1 \subset \mathcal I$,
		\item $\mathcal A_3 \subset \mathcal A_2 \subset \mathcal A_{11}$, 
		$\mathcal A_3 \subset \mathcal A_6 \subset \mathcal A_{11}$ and
		$\mathcal A_3 \subset \mathcal A_9 \subset \mathcal A_{11}$,
		\item $ \mathcal A_{11} \subset \mathcal A_{14} \subset \mathcal I$.
	\end{enumerate}
\end{enumerate}
\end{Theorem}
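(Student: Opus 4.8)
The plan is to first reduce the fourteen varieties $\mathcal A_1,\dots,\mathcal A_{14}$ to the eight named representatives by invoking the collapses already proved. By Theorem~\ref{Theo_280317_05} we have $\mathcal A_3=\mathcal A_5=\mathcal A_7=\mathcal A_8=\mathcal A_{10}$, and by Theorem~\ref{Theo_280317_03} we have $\mathcal A_{11}=\mathcal A_{12}=\mathcal A_{13}$; combined with Lemma~\ref{Lem_280317_02}, which puts $\mathcal A_2,\mathcal A_6,\mathcal A_9$ inside $\mathcal A_{11}$, this shows that every $\mathcal A_i$ equals one of $\mathcal A_1,\mathcal A_2,\mathcal A_3,\mathcal A_4,\mathcal A_6,\mathcal A_9,\mathcal A_{11},\mathcal A_{14}$. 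Hence there are at most eight such varieties, and part~(a) is reduced to the claim that these eight are pairwise distinct, which I defer to the separation stage.

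For part~(b) I would read off almost all inclusions from the machinery already built. The key observation is that $\mathcal A_3$ is of type~1 (Lemma~\ref{Lem_230317_14}), and by Theorem~\ref{Theo_230317_15} any type-1 algebra satisfies every identity $(Aj)$ with $1\le j\le 14$; therefore $\mathcal A_3\subseteq\mathcal A_j$ for all $j$, which delivers in one stroke $\mathcal A_3\subseteq\mathcal A_1$, $\mathcal A_3\subseteq\mathcal A_2$, $\mathcal A_3\subseteq\mathcal A_4$, $\mathcal A_3\subseteq\mathcal A_6$, $\mathcal A_3\subseteq\mathcal A_9$ and $\mathcal A_3\subseteq\mathcal A_{11}$. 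The inclusions $\mathcal A_2,\mathcal A_6,\mathcal A_9\subseteq\mathcal A_{11}$ are precisely Lemma~\ref{Lem_280317_02}. For $\mathcal A_{11}\subseteq\mathcal A_{14}$ I would give a short derivation using that $\mathcal A_{11}=\mathcal A_{12}$ (Theorem~\ref{Theo_280317_03}): starting from $(x\to y)\to z$, apply (A12) to reach $(y\to x)\to z$, then (A11) to reach $(y\to z)\to x$, then (A12) once more to reach $(z\to y)\to x$, which is the right side of (A14). The boundary inclusions are easy: $\mathcal{SL}\subseteq\mathcal A_3$ because in a semilattice $\to$ is commutative, associative and idempotent, so every associative-type identity holds; $\mathcal{BA}\subseteq\mathcal A_4$ by the direct check $\neg x\vee(\neg y\vee z)=\neg y\vee(\neg x\vee z)$; and all eight varieties sit inside $\mathcal I$ by definition.

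The genuinely laborious part, and the main obstacle, is showing that every inclusion above is \emph{strict} and, more generally, that the eight varieties are pairwise distinct, so that no collapse beyond those already established occurs. This is a separation problem: for each pair I must produce an $\mathcal I$-zroupoid lying in one variety but not the other. My plan is to assemble a small library of finite $\mathcal I$-zroupoids (found and checked with Mace4) and to tabulate, for each, exactly which of $(A1)$--$(A14)$ it satisfies; a carefully chosen handful then witnesses all the required strict inclusions and incomparabilities at once. Concretely I expect to use the two-element algebras $\mathbf{2_z}$ (all products $0$, which lies in $\mathcal A_3$ but not in $\mathcal{SL}$, giving $\mathcal{SL}\subsetneq\mathcal A_3$) and $\mathbf{2_b}$ (which satisfies (A4) but not (A3), so $\mathcal{BA}\not\subseteq\mathcal A_3$ and $\mathcal A_3\subsetneq\mathcal A_4$); an algebra satisfying the associative law (A1) but none of (A2),(A3),(A11) to pull $\mathcal A_1$ away from the others; algebras separating $\mathcal A_2$, $\mathcal A_6$, $\mathcal A_9$ from one another and each from $\mathcal A_{11}$; and an algebra in $\mathcal A_{14}\setminus\mathcal A_{11}$. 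Verifying that each candidate is an $\mathcal I$-zroupoid (satisfies (I) and (I$_0$)) and satisfies exactly the advertised identities is the bulk of the computation, though each individual verification is a finite, mechanical evaluation.
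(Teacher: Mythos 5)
Your proposal follows essentially the same route as the paper: reduce to the eight representatives via Theorem~\ref{Theo_280317_05}, Theorem~\ref{Theo_280317_03} and Lemma~\ref{Lem_280317_02}, obtain all the lower inclusions from the fact that $\mathcal A_3$ is of type~1 (Lemma~\ref{Lem_230317_14} plus Theorem~\ref{Theo_230317_15}), derive $\mathcal A_{11}\subseteq\mathcal A_{14}$ by chaining (A11)--(A13) (the paper uses (A13) then (A12); your (A12)/(A11)/(A12) chain is equally valid), and settle all strictness and distinctness claims by exhibiting finite separating $\mathcal I$-zroupoids, exactly as the paper does with its explicit $2$-, $3$- and $4$-element tables. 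The only thing you leave undone is actually producing those witnesses --- you correctly identify which separations are needed and that small Mace4-found algebras suffice, so the plan is sound, but a complete write-up must display the tables and verify (I) and (I$_0$) for each, as the paper does.
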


\begin{proof}
Observe that, in view of Theorem \ref{Theo_280317_05} and Theorem \ref{Theo_280317_03} we can conclude that each of the $14$ subvarieties of associative type of $\mathcal I$ is equal to one of the following varieties:
	$$\mathcal A_1, \mathcal A_2, \mathcal A_3, \mathcal A_4,  \mathcal A_6, \mathcal A_9, \mathcal A_{11}, \mathcal A_{14}.$$ 
	We first wish to prove (\ref{280317_06}).  Notice that by Lemma \ref{Lem_230317_14} we have that $\mathbf A \in \mathcal A_3$ is of type 1. Then, using Theorem \ref{Theo_230317_15}, $\mathbf A \in \mathcal A_j$ for all $1 \leq j \leq 14$. Hence 
\begin{equation} \label{280317_08}
\mathcal A_3 \subseteq \mathcal A_j \mbox{ for all } 1 \leq j \leq 14.
\end{equation}	

	\begin{enumerate}[1.]
	\item Recall that $\mathcal{SL} = \mathcal C \cap I_{1,0}$. Then, we get $\mathcal C \subseteq \mathcal A_1$ and $\mathcal I_{1,0} \subseteq \mathcal A_1$  by \cite[Theorem 8.2]{cornejo2015implication} and 
	\cite[Theorem 9.3]{cornejo2015implication}, respectively, implying $\mathcal{SL} \subseteq \mathcal A_3$, and $\mathcal A_3 \subseteq \mathcal A_4$ by (\ref{280317_08}).
		 
The algebras $\mathbf{2_z}$ and $\mathbf{2_b}$ show that 
$\mathcal{SL} \not= \mathcal A_3$ and $\mathcal A_3 \not= \mathcal A_4$, respectively.
		 


	\item In view of \cite{cornejo2016weakAssociative} we have that $\mathcal{BA} \subset \mathcal S$. By \cite[Lemma 3.1]{cornejo2016BolMoufang}, 
		$\mathcal S \models x \to (y \to z) \approx y \to (x \to z)$. Thus, $\mathcal{BA} \ \subseteq\ \mathcal A_4$.

The algebra $\mathbf{2_s}$ shows that $\mathcal{BA} \not= \mathcal A_4$ 
and 
	the following algebra shows that 
	$\mathcal A_4 \not= \mathcal I$, respectively. 
	

	\begin{tabular}{r|rrr}
		$\to$: & 0 & 1 & 2\\
		\hline
		0 & 0 & 0 & 0 \\
		1 & 2 & 0 & 0 \\
		2 & 0 & 0 & 0
	\end{tabular}

\item The algebra $\mathbf{2_b}$ shows that $\mathcal A_1 \not= \mathcal I$ and the following algebra witnesses that $\mathcal A_3 \not= \mathcal A_1$.   

\begin{tabular}{r|rrr}
	$\to$: & 0 & 1 & 2\\
	\hline
	0 & 0 & 1 & 2 \\
	1 & 1 & 1 & 2 \\
	2 & 2 & 1 & 2
\end{tabular}


\item Using \eqref{280317_08}  and Lemma \ref{Lem_280317_02} we can conclude that $\mathcal A_3 \subseteq \mathcal A_2 \subseteq \mathcal A_{11}$, 
$\mathcal A_3 \subseteq \mathcal A_6 \subseteq \mathcal A_{11}$ and
$\mathcal A_3 \subseteq \mathcal A_9 \subseteq \mathcal A_{11}$.

The following algebras show that $\mathcal A_3 \not= \mathcal A_2$ and $\mathcal A_2 \not= \mathcal A_{11}$, respectively.

\begin{tabular}{r|rrr}
	$\to$: & 0 & 1 & 2\\
	\hline
	0 & 0 & 0 & 0 \\
	1 & 2 & 0 & 2 \\
	2 & 0 & 0 & 0
\end{tabular}

\begin{tabular}{r|rrr}
	$\to$: & 0 & 1 & 2\\
	\hline
	0 & 0 & 0 & 0 \\
	1 & 2 & 0 & 0 \\
	2 & 0 & 0 & 0
\end{tabular}
	
The following algebras show that $\mathcal A_3 \not= \mathcal A_6$ and $\mathcal A_6 \not= \mathcal A_{11}$, respectively.

\begin{tabular}{r|rrrr}
	$\to$: & 0 & 1 & 2 & 3\\
	\hline
	0 & 0 & 0 & 0 & 0 \\
	1 & 0 & 2 & 3 & 0 \\
	2 & 0 & 0 & 0 & 0 \\
	3 & 0 & 0 & 0 & 0
\end{tabular}

\begin{tabular}{r|rrr}
	$\to$: & 0 & 1 & 2\\
	\hline
	0 & 0 & 0 & 0 \\
	1 & 2 & 0 & 0 \\
	2 & 0 & 0 & 0
\end{tabular} 
		
The following algebras show that $\mathcal A_3 \not= \mathcal A_9$ and $\mathcal A_9 \not= \mathcal A_{11}$, respectively.

\begin{tabular}{r|rrrr}
	$\to$: & 0 & 1 & 2 & 3\\
	\hline
	0 & 0 & 0 & 0 & 0 \\
	1 & 0 & 2 & 3 & 0 \\
	2 & 0 & 0 & 0 & 0 \\
	3 & 0 & 0 & 0 & 0
\end{tabular}

\begin{tabular}{r|rrr}
	$\to$: & 0 & 1 & 2\\
	\hline
	0 & 0 & 0 & 0 \\
	1 & 2 & 0 & 0 \\
	2 & 0 & 0 & 0
\end{tabular}

\item Let $\mathbf A \in \mathcal A_{11}$ and $a,b,c \in A$. By Theorem \ref{Theo_280317_03}, $\mathcal A_{11} = \mathcal A_{12} = \mathcal A_{13}$. Hence
\noindent $(a \to b) \to c $
$\overset{  (A13) 
}{=}  (b  \to c) \to a $
$\overset{  (A12) 
}{=}  (c \to b) \to a. $
Therefore $\mathcal A_{11} \subseteq \mathcal A_{14}$.

The algebra $\mathbf{2_b}$ shows that $\mathcal A_{14} \not= \mathcal I$ and
the following algebra shows that $\mathcal A_{11} \not= \mathcal A_{14}$.  

\begin{tabular}{r|rrrr}
	$\to$: & 0 & 1 & 2 & 3\\
	\hline
	0 & 0 & 1 & 2 & 3 \\
	1 & 2 & 3 & 2 & 3 \\
	2 & 1 & 1 & 3 & 3 \\
	3 & 3 & 3 & 3 & 3
\end{tabular}


	\end{enumerate}	
The proof of the theorem is now complete since (\ref{280317_07}) is an immediate consequence of 
 (\ref{280317_06}).
\end{proof}


The Hasse diagram of the poset of subvarieties of $\mathcal I$ of associative type, together with $\mathcal{SL}$ and $\mathcal{BA}$, is:

\begin{minipage}{0.5 \textwidth}
	\setlength{\unitlength}{8mm}
	\begin{picture}(12,15)(0,0)
\put(9,2){\circle{0.2}}

\put(8,4){\circle{0.2}}
\put(10,4){\circle{0.2}}

\put(8,6){\circle{0.2}}

\put(2,8){\circle{0.2}}	
\put(4,8){\circle{0.2}}	
\put(6,8){\circle{0.2}}	
\put(8,8){\circle{0.2}}	
\put(10,8){\circle{0.2}}

\put(4,10){\circle{0.2}}	

\put(4,12){\circle{0.2}}	

\put(6,14){\circle{0.2}}

\put(4,12){\line(1,1){2}}
\put(2,8){\line(1,1){2}}
\put(8,6){\line(1,1){2}}
\put(9,2){\line(1,2){1}}

\put(4,8){\line(0,1){4}}
\put(8,4){\line(0,1){4}}
\put(10,4){\line(0,1){4}}

\put(8,8){\line(-1,3){2}}
\put(10,8){\line(-2,3){4}}
\put(6,8){\line(-1,1){2}}
\put(9,2){\line(-1,2){1}}
\put(8,6){\line(-3,1){6}}
\put(8,6){\line(-2,1){4}}
\put(8,6){\line(-1,1){2}}

\put(9.5,2){\makebox(0,0){$\mathcal T$}}

\put(8.8,4){\makebox(0,0){$\mathcal{SL}$}}
\put(10.8,4){\makebox(0,0){$\mathcal{BA}$}}

\put(8.8,6){\makebox(0,0){$\mathcal{A}_3$}}

\put(2.8,8){\makebox(0,0){$\mathcal{A}_2$}}
\put(4.8,8){\makebox(0,0){$\mathcal{A}_6$}}
\put(6.8,8){\makebox(0,0){$\mathcal{A}_9$}}
\put(8.8,8){\makebox(0,0){$\mathcal{A}_1$}}
\put(10.8,8){\makebox(0,0){$\mathcal{A}_4$}}

\put(4.8,10){\makebox(0,0){$\mathcal{A}_{11}$}}

\put(4.8,12){\makebox(0,0){$\mathcal{A}_{14}$}}

\put(6.8,14){\makebox(0,0){$\mathcal I$}}

	\end{picture}
\end{minipage}


\section{Identities of Associative type in Symmetric Implication Zroupoids}

	Let $\mathbf{A} \in \mathcal{I}$.  $\mathbf{A}$ is {\rm involutive} if $\mathbf{A} \in \mathcal{I}_{2,0}$.  $\mathbf{A}$ is {\rm meet-commutative} if $\mathbf{A} \in \mathcal{MC}$.  $\mathbf{A}$ is {\rm symmetric} if $\mathbf{A}$ is both involutive and meet-commutative.    Let $\mathcal{S}$ denote the variety of symmetric $\mathcal{I}$-zroupoids.  In other words, $\mathcal{S} = \mathcal{I}_{2,0} \cap \mathcal{MC}$.
The variety $\mathcal{S}$ was investigated in \cite{cornejo2015implication}, \cite{cornejo2016BolMoufang} and \cite{cornejo2016weakAssociative} and has some interesting properties.

In this section we give an application of the main theorem, Theorem 4.2, to describe the poset of the subvarieties of the variety $\mathcal{S}$.




\begin{Lemma} {\rm \cite[Lemma 3.1 (a)]{cornejo2016BolMoufang} \label{properties_of_I20_MC}}
	Let $\mathbf A  \in \mathcal S$. Then $\mathbf A$ satisfies
$x \to (y \to z) \approx y \to (x \to z)$.
\end{Lemma}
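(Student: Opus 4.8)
The plan is to pass to the derived meet operation $x \wedge y := (x \to y')'$ and to recognise the claimed identity as an expression of the semilattice behaviour of $\wedge$ on $\mathcal{S}$. First I would record the translation valid throughout $\mathcal{I}_{2,0}$: since $\mathbf{A} \models x'' \approx x$, one has $x \to y \approx (x \wedge y')'$, because $(x \wedge y')' = (x \to y'')'' = x \to y$ after using $y'' \approx y$ and then \eqref{eq_I20}; dually $(x \to y)' \approx x \wedge y'$. These two facts let me rewrite both sides of the target identity purely in terms of $\wedge$.

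Applying the translation twice gives $x \to (y \to z) \approx (x \wedge (y \to z)')' \approx (x \wedge (y \wedge z'))'$, and symmetrically $y \to (x \to z) \approx (y \wedge (x \wedge z'))'$. Hence, relative to $\mathcal{I}_{2,0}$, the identity $x \to (y \to z) \approx y \to (x \to z)$ is equivalent to the rearrangement $x \wedge (y \wedge z') \approx y \wedge (x \wedge z')$. Writing $w := z'$, which is legitimate since $(\cdot)'$ is an involution by \eqref{eq_I20} and so $w$ ranges over all of $A$, this is exactly the left-commutativity law $x \wedge (y \wedge w) \approx y \wedge (x \wedge w)$ for $\wedge$.

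Thus the proof reduces to showing that $\langle A, \wedge \rangle$ is a commutative semilattice. Commutativity is precisely the defining identity \eqref{eq_MC} of $\mathcal{MC}$, and idempotency $x \wedge x \approx x$ is immediate from Lemma \ref{general_properties_equiv}, which yields $(x \to x')' \approx x$ in $\mathcal{I}_{2,0}$. Granting associativity, left-commutativity follows at once, since $x \wedge (y \wedge w) = (x \wedge y) \wedge w = (y \wedge x) \wedge w = y \wedge (x \wedge w)$, and the argument is complete.

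The hard part is therefore associativity of $\wedge$, which under the same translation amounts to the identity $x \to (y \to z) \approx (x \to y')' \to z$ in $\mathcal{S}$. This is where the fundamental identity (I) must enter essentially: the right-hand side now has the outer shape $(\ \cdot\ ) \to z$, to which (I) applies directly, and I would expand it and then suppress the resulting double negations using the $\mathcal{I}_{2,0}$-identities of Lemma \ref{general_properties2} (notably \ref{general_properties2}~(\ref{071114_04}), \ref{general_properties2}~(\ref{311014_03}), \ref{general_properties2}~(\ref{311014_06}) and \ref{general_properties2}~(\ref{281114_01})) together with the Transfer Theorem \ref{Transfer_Theo}. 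I expect the calculational bookkeeping in establishing associativity to be the only genuine obstacle, precisely because neither $\mathcal{I}_{2,0}$ nor $\mathcal{MC}$ alone forces $\wedge$ to be associative, so both halves of the hypothesis $\mathcal{S} = \mathcal{I}_{2,0} \cap \mathcal{MC}$ and the identity (I) have to be used simultaneously.
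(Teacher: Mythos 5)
Your reduction is clean and the translation steps are all correct: in $\mathcal{I}_{2,0}$ one indeed has $x \to y \approx (x \wedge y')'$ and $(x \to y)' \approx x \wedge y'$, so the target identity is equivalent to the left-commutativity $x \wedge (y \wedge w) \approx y \wedge (x \wedge w)$, and this follows from commutativity (which is exactly \eqref{eq_MC}) together with associativity of $\wedge$. The problem is that you never prove associativity. You explicitly write ``Granting associativity\dots'' and then describe only a strategy (``expand using (I) and suppress double negations''), so under your own reduction the entire content of the lemma has been pushed into a step that is asserted rather than established. This is not mere bookkeeping: the associativity of $\wedge$ in $\mathcal{I}_{2,0}$ is a substantial theorem in its own right (it is essentially the main result of \cite{cornejo2016order}), and proving it requires a long and delicate chain of consequences of (I), not a routine expansion. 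As written, the proposal is therefore an outline with a genuine gap at its core. (The lemma itself is imported by the present paper from \cite[Lemma~3.1(a)]{cornejo2016BolMoufang}, where it is obtained by a direct equational computation in $\mathcal S$ rather than via the semilattice reduction.)

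Two smaller points. First, your closing remark that ``neither $\mathcal{I}_{2,0}$ nor $\mathcal{MC}$ alone forces $\wedge$ to be associative'' is wrong in its first half: $\mathcal{I}_{2,0}$ alone does force associativity of $\wedge$, by the result of \cite{cornejo2016order} just mentioned. Ironically, citing that result is precisely what would repair your argument: associativity from \cite{cornejo2016order}, commutativity from \eqref{eq_MC}, and your translation then give the identity immediately. Second, the idempotency of $\wedge$ (via Lemma \ref{general_properties_equiv}) is correct but irrelevant to left-commutativity; only associativity and commutativity are used.
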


\begin{Lemma}{\rm \cite[Lemma 2.1]{cornejo2016BolMoufang} \label{lemma_SL_I10_C}}
	$\mathcal{MC} \cap \mathcal{I}_{1,0} \subseteq  \mathcal{C} \cap \mathcal{I}_{1,0} = \mathcal{SL}$.  
\end{Lemma}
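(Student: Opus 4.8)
The plan is to establish the inclusion $\mathcal{MC} \cap \mathcal{I}_{1,0} \subseteq \mathcal{C} \cap \mathcal{I}_{1,0}$ directly, since the equality $\mathcal{C} \cap \mathcal{I}_{1,0} = \mathcal{SL}$ is already available from \cite{cornejo2015implication} and recalled in the Preliminaries. So I would fix an arbitrary $\mathbf{A} \in \mathcal{MC} \cap \mathcal{I}_{1,0}$ and aim to prove that $\mathbf{A}$ is commutative, i.e. $\mathbf{A} \models x \to y \approx y \to x$; this places $\mathbf{A}$ in $\mathcal{C}$, and since $\mathbf{A} \in \mathcal{I}_{1,0}$ by hypothesis, in $\mathcal{C} \cap \mathcal{I}_{1,0} = \mathcal{SL}$.

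The key observation I would exploit is that the derived operation $\wedge$ collapses onto $\to$ once $x' \approx x$ holds. Concretely, in $\mathcal{I}_{1,0}$ we have $y' \approx y$ and, applying $x' \approx x$ to the element $x \to y$, also $(x \to y)' \approx x \to y$. Substituting these into the definition $x \wedge y := (x \to y')'$ gives first $x \to y' \approx x \to y$ and then $(x \to y')' \approx (x \to y)' \approx x \to y$; hence $\mathbf{A} \models x \wedge y \approx x \to y$, and symmetrically $\mathbf{A} \models y \wedge x \approx y \to x$.

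With this in hand, meet-commutativity does the rest: the defining identity $x \wedge y \approx y \wedge x$ of $\mathcal{MC}$ becomes $x \to y \approx y \to x$, which is exactly the commutativity identity defining $\mathcal{C}$. Therefore $\mathbf{A} \in \mathcal{C}$, completing the inclusion and, together with the cited equality $\mathcal{C} \cap \mathcal{I}_{1,0} = \mathcal{SL}$, the whole statement.

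There is no substantive obstacle here: the argument is a short substitution, and no appeal to the deeper identities of $\mathcal{I}$ collected in the Preliminaries is needed. The only point requiring a moment's care is to apply $x' \approx x$ twice and at the right subterms---once to the variable $y$ and once to the compound term $x \to y$---so that the outer prime in $(x \to y')'$ is correctly removed.
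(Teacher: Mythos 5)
Your argument is correct. Note, however, that the paper itself offers no proof of this lemma at all: it is quoted verbatim from \cite[Lemma 2.1]{cornejo2016BolMoufang}, so there is no internal proof to compare against. What you have done is supply a short self-contained derivation of the inclusion $\mathcal{MC} \cap \mathcal{I}_{1,0} \subseteq \mathcal{C} \cap \mathcal{I}_{1,0}$: since $x' \approx x$ is an identity of $\mathcal{I}_{1,0}$, it may be instantiated both at the variable $y$ and at the compound term $x \to y$, so $x \wedge y = (x \to y')' \approx (x \to y)' \approx x \to y$, and meet-commutativity then literally becomes the commutativity identity defining $\mathcal{C}$. Combined with the definitional fact that $\mathcal{SL}$ is axiomatized relative to $\mathcal{I}$ by $x' \approx x$ together with $x \to y \approx y \to x$ (so that $\mathcal{C} \cap \mathcal{I}_{1,0} = \mathcal{SL}$), this is a complete and entirely elementary proof, using nothing beyond the definition of $\wedge$ and the defining identities of the varieties involved. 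This is a reasonable way to make the paper self-contained at this point, whereas the authors chose to defer to the earlier reference.
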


\begin{Lemma}{\rm \cite[Lemma 3.2]{cornejo2016BolMoufang}} \label{310516_09}
	Let $\mathbf A  \in \mathcal S$ such that $\mathbf A \models x \to x \approx x$. Then $\mathbf A \models x' \approx x$. 
\end{Lemma}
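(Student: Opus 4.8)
The plan is to exploit the ``partial commutativity'' identity $x \to (y \to z) \approx y \to (x \to z)$, which holds throughout $\mathcal S$ by Lemma \ref{properties_of_I20_MC}, together with involutivity (the identity $x'' \approx x$, available since $\mathbf A \in \mathcal I_{2,0}$) and the idempotency hypothesis $x \to x \approx x$. Notably, I do not expect to need the defining identity (I) directly, nor $0' \approx 0$ or $0 \to x \approx x$; the argument should flow entirely from these three facts.

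First I would specialize the identity of Lemma \ref{properties_of_I20_MC} at $z = 0$. Recalling that $w' = w \to 0$, this immediately yields $x \to y' \approx y \to x'$. Next I would substitute $y \mapsto y'$ into this identity and invoke $y'' \approx y$ to cancel the double prime on the left, obtaining the ``contraposition'' law $x \to y \approx y' \to x'$. This is the one step requiring a small amount of foresight: the substitution $y \mapsto y'$ followed by involutivity is what turns the primed-argument identity into a clean contrapositive form.

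Finally I would set $y = x$ in the contraposition law and apply idempotency to each side separately. The left-hand side becomes $x \to x = x$, while the right-hand side becomes $x' \to x' = x'$, where idempotency is applied to the element $x'$. Equating the two collapsed sides gives $x \approx x'$, which is exactly the desired conclusion.

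The only real ingenuity lies in producing the contraposition law $x \to y \approx y' \to x'$; once it is in hand, idempotency forces both sides of its diagonal instance ($y = x$) down to single variables, and the equality $x \approx x'$ is unavoidable. I therefore expect the ``hard part'' to be merely recognizing that substituting $y \mapsto y'$ (rather than attempting to manipulate $0$ or the meet operation directly) is the efficient route, after which the proof is a three-line chain of specializations.
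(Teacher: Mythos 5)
Your argument is correct. Note that the paper does not prove this lemma at all---it is imported by citation from \cite[Lemma 3.2]{cornejo2016BolMoufang}---so there is no in-paper proof to compare against; but your derivation is a valid, self-contained one using only tools available here: specializing Lemma \ref{properties_of_I20_MC} at $z=0$ gives $x \to y' \approx y \to x'$, the substitution $y \mapsto y'$ together with $y'' \approx y$ (valid since $\mathcal S \subseteq \mathcal I_{2,0}$) yields the contraposition law $x \to y \approx y' \to x'$, and the diagonal instance $y=x$ collapses under idempotency (applied once to $x$ and once to $x'$) to $x \approx x'$. Each step is a legitimate substitution into an identity holding for all elements, so the proof stands.
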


\begin{Lemma} \label{110717_03}
	$\mathcal A_{11} \cap \mathcal S= \mathcal{SL}$.
\end{Lemma}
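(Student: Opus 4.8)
The plan is to prove the two inclusions $\mathcal{SL} \subseteq \mathcal A_{11} \cap \mathcal S$ and $\mathcal A_{11} \cap \mathcal S \subseteq \mathcal{SL}$ separately. For the forward inclusion, recall that $\mathcal{SL} = \mathcal C \cap \mathcal I_{1,0}$, and semilattices are commutative and associative with $x' \approx x$; so for $\mathbf A \in \mathcal{SL}$ the term $(x \to y) \to z$ equals $(x \to z) \to y$ by commutativity and associativity, giving $\mathbf A \models (A11)$. Moreover every semilattice is symmetric (it lies in both $\mathcal I_{2,0}$, since $x'' \approx x$ follows from $x' \approx x$, and in $\mathcal{MC}$, since $\wedge$ coincides with the commutative semilattice meet). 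Hence $\mathcal{SL} \subseteq \mathcal A_{11} \cap \mathcal S$.

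The substance of the lemma is the reverse inclusion. Here I would take $\mathbf A \in \mathcal A_{11} \cap \mathcal S$ and aim to show $\mathbf A \in \mathcal{SL}$. Since $\mathcal{SL} = \mathcal C \cap \mathcal I_{1,0}$ and I already have the symmetric (hence involutive, $x'' \approx x$) structure, it suffices to prove two things: that $\mathbf A$ is commutative (lies in $\mathcal C$, i.e.\ $x \to y \approx y \to x$) and that $\mathbf A \models x' \approx x$ (lies in $\mathcal I_{1,0}$). In fact, in view of Lemma \ref{lemma_SL_I10_C} ($\mathcal{MC} \cap \mathcal I_{1,0} = \mathcal{SL}$) and the fact that $\mathbf A \in \mathcal S \subseteq \mathcal{MC}$, it would be enough to establish just $x' \approx x$; then $\mathbf A \in \mathcal{MC} \cap \mathcal I_{1,0} = \mathcal{SL}$. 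So I would concentrate entirely on deriving the idempotent-involution law $x' \approx x$ from $(A11)$ together with the symmetry identities.

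To get $x' \approx x$ I would exploit the commutativity of $\wedge$ available in $\mathcal S$, which by Lemma \ref{properties_of_I20_MC} yields the exchange law $x \to (y \to z) \approx y \to (x \to z)$, and combine it with $(A11)$: $(x \to y) \to z \approx (x \to z) \to y$. The strategy is to specialize variables (for instance setting $z := 0$ or $y := 0$) to force collapses among $x$, $x'$, and $x''$. A natural first move is to substitute $z = 0$ in $(A11)$ to obtain $(x \to y)' \approx (x \to 0) \to y = x' \to y$, and then to combine this with the involutive law $x'' \approx x$ and Lemma \ref{310516_09}, which says precisely that in $\mathcal S$ the idempotent law $x \to x \approx x$ implies $x' \approx x$. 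Thus the real target reduces to proving $\mathbf A \models x \to x \approx x$; once that holds, Lemma \ref{310516_09} finishes the job and Lemma \ref{lemma_SL_I10_C} closes the argument.

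The main obstacle I anticipate is precisely establishing the idempotent law $x \to x \approx x$ (or directly $x' \approx x$) from $(A11)$ within $\mathcal S$: this is the one genuinely computational step, and it will require a careful chain of substitutions into $(A11)$ and the exchange identity of Lemma \ref{properties_of_I20_MC}, repeatedly using $x'' \approx x$ and the transfer-type simplifications (Lemma \ref{general_properties_equiv}, e.g.\ $0' \to x \approx x$) to eliminate the constant $0$. I would begin by computing $(x \to x) \to x$ two ways via $(A11)$ and the exchange law and look for a cancellation that yields $x \to x$ on one side and $x$ on the other; if that is not immediate, setting one variable to $0$ to relate $x \to x$ with $x''=x$ is the fallback. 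Everything else is routine once this identity is in hand.
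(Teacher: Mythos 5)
Your overall architecture matches the paper's: the content of the lemma is the inclusion $\mathcal A_{11} \cap \mathcal S \subseteq \mathcal{SL}$, and it is correct that this reduces to deriving $x' \approx x$, after which Lemma \ref{lemma_SL_I10_C} (together with $\mathcal S \subseteq \mathcal{MC}$) finishes the argument; your forward inclusion $\mathcal{SL} \subseteq \mathcal A_{11} \cap \mathcal S$ is routine and fine. However, the proposal never actually carries out the one step that carries all the weight: you announce the derivation of $x' \approx x$ (equivalently, of $x \to x \approx x$) as the ``main obstacle'' and offer only two tentative lines of attack, neither of which closes. The detour through idempotence and Lemma \ref{310516_09} is in fact circular here: your $z := 0$ substitution in (A11) gives $(x \to y)' \approx x' \to y$, and putting $y := x$ yields $(x \to x)' \approx x' \to x \approx x$ by Lemma \ref{general_properties_equiv} (\ref{LeftImplicationwithtilde}), hence $x \to x \approx x'$ in $\mathcal I_{2,0}$ --- so proving $x \to x \approx x$ from this point is exactly the same task as proving $x' \approx x$, not a reduction of it. Your other suggestion, computing $(x \to x) \to x$ two ways via (A11) and the exchange law, degenerates because with all three variables equal both identities are vacuous.

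The missing computation is a different instantiation of (A11), namely $x := 0'$ rather than $z := 0$. In $\mathcal I_{2,0}$ one has, for any $a$,
$a = a' \to a = (0' \to a') \to a$, and (A11) (swapping the roles of $a'$ and $a$) turns this into $(0' \to a) \to a' = a \to a' = a'' \to a' = a'$, using $0' \to u \approx u$ and $u' \to u \approx u$ from Lemma \ref{general_properties_equiv}. Until a chain of this kind is supplied, the proof has a genuine gap at its central step.
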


\begin{proof}
	Let $\mathbf A \in \mathcal A_{11} \cap \mathcal S$ and $a \in A$. Since $\mathcal S \subseteq \mathcal I_{2,0}$, we have
$$
\begin{array}{lcll}
a & = & a' \to a & \mbox{by Lemma \ref{general_properties_equiv} (\ref{LeftImplicationwithtilde})} \\
& = & (0' \to a') \to a & \mbox{by Lemma \ref{general_properties_equiv} (\ref{TXX})} \\
& = & (0' \to a) \to a' & \mbox{by (A11)} \\
& = & a \to a' & \mbox{by Lemma \ref{general_properties_equiv} (\ref{TXX})} \\
& = & a'' \to a' & \mbox{} \\
& = & a' & \mbox{by Lemma \ref{general_properties_equiv} (\ref{LeftImplicationwithtilde}).} 
\end{array}
$$
Therefore, $\mathbf A \models x \approx x'$. Then, by Lemma \ref{lemma_SL_I10_C}, $\mathbf A \in \mathcal{SL}$.
\end{proof}

\begin{Lemma} \label{110717_04}
	$\mathcal A_{1} \cap \mathcal S \subseteq \mathcal{SL}$.
\end{Lemma}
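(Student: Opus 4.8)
The plan is to follow the strategy of Lemma \ref{110717_03}: I will show that every $\mathbf A \in \mathcal A_1 \cap \mathcal S$ satisfies $x \approx x'$, and then, since $\mathbf A \in \mathcal{MC}$ (because $\mathcal S = \mathcal I_{2,0} \cap \mathcal{MC}$), conclude that $\mathbf A \in \mathcal{MC} \cap \mathcal I_{1,0} \subseteq \mathcal{SL}$ by Lemma \ref{lemma_SL_I10_C}. The whole argument turns on extracting one convenient consequence of the associative law $(A1)$.

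First I would record that consequence. Instantiating $(A1)$ with $z := 0$ gives $(x \to y)' = (x \to y) \to 0 = x \to (y \to 0) = x \to y'$, so $\mathbf A \models (x \to y)' \approx x \to y'$. Moreover, since $\mathbf A \in \mathcal I_{2,0}$ the map $a \mapsto a'$ is an involution, hence injective. Next I would derive commutativity of $\to$. Meet-commutativity reads $(x \to y')' \approx (y \to x')'$, so injectivity of $'$ yields $x \to y' \approx y \to x'$. Combining this with the identity $(x \to y)' \approx x \to y'$ just obtained (and its version with $x,y$ interchanged) gives $(x \to y)' \approx x \to y' \approx y \to x' \approx (y \to x)'$, and injectivity of $'$ once more forces $x \to y \approx y \to x$.

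Finally I would collapse $'$ to the identity. Because $\mathbf A \in \mathcal I_{2,0}$, Lemma \ref{general_properties_equiv} supplies both $(x \to x')' \approx x$ (item \ref{reflexivity}) and $x' \to x \approx x$ (item \ref{LeftImplicationwithtilde}). Applying $'$ to the former and using $a'' \approx a$ gives $x \to x' \approx x'$; commutativity turns this into $x' \to x \approx x'$, which together with $x' \to x \approx x$ forces $x \approx x'$. Thus $\mathbf A \models x \approx x'$, i.e. $\mathbf A \in \mathcal I_{1,0}$, and the reduction via Lemma \ref{lemma_SL_I10_C} completes the proof.

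The only step that needs a genuine idea is the derivation of commutativity: everything hinges on the single identity $(x \to y)' \approx x \to y'$ produced by associativity, which is precisely what lets the commutativity of the derived meet $\land$ be transported to $\to$ itself. Once commutativity is in hand, the collapse $x \approx x'$ is forced purely by the $\mathcal I_{2,0}$-identities of Lemma \ref{general_properties_equiv}, so beyond this transfer I anticipate no real obstacle, only routine bookkeeping.
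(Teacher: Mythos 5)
Your proof is correct, but it takes a genuinely different route from the paper's. The paper never derives commutativity of $\to$: instead it uses $(A1)$ twice to establish first $0 \to x \approx x$ (via $0' \to x \approx x$ and $0 \to (0 \to x) \approx 0 \to x$, Lemma \ref{general_properties2}(\ref{311014_03})) and then the idempotency $x \to x \approx x$, after which it invokes the imported Lemma \ref{310516_09} (from the Bol--Moufang paper) as a black box to conclude $x' \approx x$; meet-commutativity enters only inside that cited lemma and in the final reduction via Lemma \ref{lemma_SL_I10_C}. You instead extract the single identity $(x \to y)' \approx x \to y'$ from $(A1)$ with $z := 0$, exploit injectivity of $'$ (valid in $\mathcal I_{2,0}$ since $'$ is an involution) to transfer the commutativity of $\land$ to commutativity of $\to$ itself, and then collapse $'$ to the identity using only the equivalences of Lemma \ref{general_properties_equiv} --- all steps check out: $(x\to x')'\approx x$ gives $x \to x' \approx x'$, commutativity gives $x' \to x \approx x'$, and $x' \to x \approx x$ forces $x \approx x'$. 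Your argument is more self-contained, avoiding both Lemma \ref{310516_09} and Lemma \ref{general_properties2}(\ref{311014_03}), and it exposes extra structure (full commutativity of $\to$ in $\mathcal A_1 \cap \mathcal S$) that the paper's proof leaves implicit; the paper's proof, by contrast, is shorter given the machinery already available in the cited literature and parallels the proof of Lemma \ref{110717_03} more closely in spirit.
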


\begin{proof}
	Let $\mathbf A \in \mathcal A_{1} \cap \mathcal S$ and $a \in A$. 
Then
$$
\begin{array}{lcll}
a & = & 0'\to a & \mbox{by Lemma \ref{general_properties_equiv} (\ref{TXX})} \\
& = & (0 \to 0) \to a & \mbox{} \\
& = & 0 \to (0 \to a) & \mbox{by (A1)} \\
& = & 0 \to a & \mbox{by Lemma \ref{general_properties2} (\ref{311014_03})}. 
\end{array}
$$
Consequently,
\begin{equation} \label{110717_01}
\mathbf A \models x \approx 0 \to x.
\end{equation}	
Therefore,
$$
\begin{array}{lcll}
a & = & a' \to a & \mbox{by Lemma \ref{general_properties_equiv} (\ref{LeftImplicationwithtilde})} \\
& = & (a \to 0) \to a & \mbox{} \\
& = & a \to (0 \to a) & \mbox{by (A1)} \\
& = & a \to a & \mbox{by equation (\ref{110717_01})} 
\end{array}
$$
Thus, by Lemma \ref{310516_09}, $\mathbf A \models x' \approx x$. Using Lemma \ref{lemma_SL_I10_C} we can conclude the proof.

\end{proof}

We will denote by $\mathcal S_i$ the variety $\mathcal A_i \cap \mathcal S$ with $1 \leq i \leq 14$.

We will now give an application of the main theorem, Theorem \ref{mainTheorem}, of Section \ref{main_theorem_section}.  

\begin{Proposition} \label{110717_07}
	Each of the $14$ subvarieties of associative type of $\mathcal S$ is equal to one of the following varieties:
	$$\mathcal{SL}, \,   \mathcal S_{14}, \, \mathcal S.$$
\end{Proposition}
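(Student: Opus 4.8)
The plan is to transport the structural results about the fourteen varieties $\mathcal A_i$ down to $\mathcal S$ by intersecting everything with $\mathcal S$, and then to pin down each resulting variety $\mathcal S_i = \mathcal A_i \cap \mathcal S$ using the three dedicated lemmas. First I would invoke Theorem \ref{Theo_280317_05} and Theorem \ref{Theo_280317_03}: since $\mathcal A_3 = \mathcal A_5 = \mathcal A_7 = \mathcal A_8 = \mathcal A_{10}$ and $\mathcal A_{11} = \mathcal A_{12} = \mathcal A_{13}$, intersecting each equality with $\mathcal S$ immediately yields $\mathcal S_3 = \mathcal S_5 = \mathcal S_7 = \mathcal S_8 = \mathcal S_{10}$ and $\mathcal S_{11} = \mathcal S_{12} = \mathcal S_{13}$. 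This collapses the fourteen varieties $\mathcal S_1, \dots, \mathcal S_{14}$ to the eight representatives $\mathcal S_1, \mathcal S_2, \mathcal S_3, \mathcal S_4, \mathcal S_6, \mathcal S_9, \mathcal S_{11}, \mathcal S_{14}$.

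Next I would fix three ``anchor'' identities among these eight. For $\mathcal S_4$, Lemma \ref{properties_of_I20_MC} says that every $\mathbf A \in \mathcal S$ satisfies $x \to (y \to z) \approx y \to (x \to z)$, which is precisely $(A4)$; hence $\mathcal S \subseteq \mathcal A_4$, and therefore $\mathcal S_4 = \mathcal A_4 \cap \mathcal S = \mathcal S$. For $\mathcal S_{11}$, Lemma \ref{110717_03} gives directly $\mathcal S_{11} = \mathcal A_{11} \cap \mathcal S = \mathcal{SL}$. For $\mathcal S_1$, Lemma \ref{110717_04} gives $\mathcal A_1 \cap \mathcal S \subseteq \mathcal{SL}$, and the reverse inclusion follows since $\mathcal{SL} \subseteq \mathcal A_3 \subseteq \mathcal A_1$ (Theorem \ref{mainTheorem}) and $\mathcal{SL} \subseteq \mathcal S$, so that $\mathcal{SL} \subseteq \mathcal A_1 \cap \mathcal S$; thus $\mathcal S_1 = \mathcal{SL}$.

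Finally I would squeeze the four remaining varieties $\mathcal S_2, \mathcal S_3, \mathcal S_6, \mathcal S_9$ between $\mathcal{SL}$ and $\mathcal{SL}$. Intersecting the inclusions $\mathcal A_3 \subseteq \mathcal A_2 \subseteq \mathcal A_{11}$, $\mathcal A_3 \subseteq \mathcal A_6 \subseteq \mathcal A_{11}$, and $\mathcal A_3 \subseteq \mathcal A_9 \subseteq \mathcal A_{11}$ from Theorem \ref{mainTheorem} with $\mathcal S$ yields $\mathcal S_3 \subseteq \mathcal S_2, \mathcal S_6, \mathcal S_9 \subseteq \mathcal S_{11}$. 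Since $\mathcal{SL} \subseteq \mathcal A_3$ and $\mathcal{SL} \subseteq \mathcal S$ we also have $\mathcal{SL} \subseteq \mathcal S_3$; combined with $\mathcal S_{11} = \mathcal{SL}$ this forces $\mathcal S_2 = \mathcal S_3 = \mathcal S_6 = \mathcal S_9 = \mathcal{SL}$. Together with the anchor results, every $\mathcal S_i$ equals $\mathcal{SL}$, $\mathcal S$, or $\mathcal S_{14}$, which is the claim.

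I expect there to be no genuine obstacle here, since all the hard computational content resides in the lemmas and in the main theorem; the proposition is essentially a bookkeeping consequence of the already-established poset of the $\mathcal A_i$. The one point requiring care is that $\mathcal S_1$ and $\mathcal S_4$ cannot be handled by the ``squeezing'' argument, because $\mathcal A_1$ and $\mathcal A_4$ are not contained in $\mathcal A_{11}$ in the poset of Theorem \ref{mainTheorem}; these two cases genuinely require the separate inputs (Lemma \ref{110717_04} and Lemma \ref{properties_of_I20_MC}, respectively), and it is precisely $\mathcal S_4 = \mathcal S$ that prevents the entire family from collapsing to $\mathcal{SL}$.
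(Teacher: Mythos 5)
Your proposal is correct and follows essentially the same route as the paper: collapse the fourteen varieties to eight representatives via Theorems \ref{Theo_280317_05} and \ref{Theo_280317_03}, identify $\mathcal S_4 = \mathcal S$ via Lemma \ref{properties_of_I20_MC}, and use Lemmas \ref{110717_03} and \ref{110717_04} together with the inclusions of Theorem \ref{mainTheorem} to force $\mathcal S_1, \mathcal S_2, \mathcal S_3, \mathcal S_6, \mathcal S_9, \mathcal S_{11}$ down to $\mathcal{SL}$. The only difference is presentational: you fix the three anchors first and then squeeze, whereas the paper writes the same containments as chains such as $\mathcal{SL} \subseteq \mathcal S_3 \subseteq \mathcal S_2 \subseteq \mathcal S_{11} \subseteq \mathcal{SL}$.
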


\begin{proof}
	From Theorem \ref{Theo_280317_05} and Theorem \ref{Theo_280317_03}  we know that each of the $14$ subvarieties of associative type of $\mathcal I$ is equal to one of the following varieties:
	$$\mathcal A_1, \mathcal A_2, \mathcal A_3, \mathcal A_4,  \mathcal A_6, \mathcal A_9, \mathcal A_{11}, \mathcal A_{14}.$$
	Using Theorem \ref{mainTheorem}, Lemma \ref{110717_03} and Lemma \ref{110717_04} we have that 
	$$\mathcal{SL} \subseteq \mathcal S_3 \subseteq \mathcal S_2 \subseteq \mathcal S_{11} \subseteq \mathcal{SL},$$
	$$\mathcal{SL} \subseteq \mathcal S_6 \subseteq  \mathcal S_{11} \subseteq \mathcal{SL},$$
	$$\mathcal{SL} \subseteq \mathcal S_9 \subseteq  \mathcal S_{11} \subseteq \mathcal{SL}$$ and
	$$\mathcal{SL} \subseteq \mathcal S_1 \subseteq  \mathcal{SL}$$

By Lemma \ref{properties_of_I20_MC}, $\mathcal A_4 = \mathcal S$, So, $\mathcal S_4 = \mathcal S.$
\end{proof}

We are now ready to present the main theorem of this section.
\begin{Theorem} \label{mainsectionTheorem} We have
	\begin{enumerate}[{\rm(a)}]
		\item The following are the $3$ subvarieties of $\mathcal S$ of associative type  that are distinct from each other. \label{110717_05}
		$$  \mathcal{SL}, \,  \mathcal S_{14}, \, \mathcal S.$$
		\item They satisfy the following relationships \label{110717_06}
		\begin{enumerate}[1.]
			\item $\mathcal{SL} \subset \mathcal S_{14} \subset \mathcal S,$
			\item $\mathcal{BA} \not\subset \mathcal S_{14}.$
		\end{enumerate}
	\end{enumerate}
\end{Theorem}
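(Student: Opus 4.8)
The plan is to lean on Proposition \ref{110717_07}, which has already done the heavy lifting by collapsing all fourteen intersections $\mathcal S_i = \mathcal A_i \cap \mathcal S$ down to the three candidate varieties $\mathcal{SL}$, $\mathcal S_{14}$ and $\mathcal S$. Granting that, the work remaining for Theorem \ref{mainsectionTheorem} is purely to pin down the inclusion chain in part (\ref{110717_06}), to produce the non-inclusion $\mathcal{BA} \not\subseteq \mathcal S_{14}$, and then to read off the distinctness claim (\ref{110717_05}) as a formal consequence of the strict inclusions. So I would organize the argument as: (i) the non-strict chain, (ii) two separating algebras, (iii) the conclusion.

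For the non-strict chain $\mathcal{SL} \subseteq \mathcal S_{14} \subseteq \mathcal S$, the right-hand inclusion is immediate from the definition $\mathcal S_{14} = \mathcal A_{14} \cap \mathcal S \subseteq \mathcal S$. For the left-hand inclusion I would combine $\mathcal{SL} \subseteq \mathcal A_3$ (Theorem \ref{mainTheorem}) with $\mathcal A_3 \subseteq \mathcal A_{14}$ from \eqref{280317_08} to get $\mathcal{SL} \subseteq \mathcal A_{14}$, and then recall $\mathcal{SL} \subseteq \mathcal S$ (already invoked inside the proof of Proposition \ref{110717_07}, since $\mathcal{SL}\subseteq\mathcal S_3$); together these give $\mathcal{SL} \subseteq \mathcal A_{14}\cap\mathcal S = \mathcal S_{14}$.

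Both strictness assertions together with the non-inclusion should then follow from exactly two concrete algebras. For $\mathcal S_{14} \subsetneq \mathcal S$ and simultaneously $\mathcal{BA} \not\subseteq \mathcal S_{14}$, I would use the two-element Boolean algebra $\mathbf{2_b}$: since $\mathcal{BA} \subset \mathcal S$ we have $\mathbf{2_b}\in\mathcal S$, while a one-line check shows $\mathbf{2_b}\not\models (A14)$, for instance $(1\to 1)\to 0 = 0 \neq 1 = (0\to 1)\to 1$, so $\mathbf{2_b}\in\mathcal S\setminus\mathcal S_{14}$ and $\mathbf{2_b}\in\mathcal{BA}\setminus\mathcal S_{14}$. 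For $\mathcal{SL} \subsetneq \mathcal S_{14}$, I would recycle the four-element algebra already displayed in the proof of Theorem \ref{mainTheorem} to witness $\mathcal A_{11}\neq\mathcal A_{14}$; by construction it satisfies $(A14)$, it fails to be a semilattice because $1' = 2 \neq 1$, and the point is to verify that it is symmetric, so that it lands in $\mathcal S_{14}\setminus\mathcal{SL}$.

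With these in hand, part (\ref{110717_05}) is immediate: the strict inclusions $\mathcal{SL} \subsetneq \mathcal S_{14} \subsetneq \mathcal S$ force the three varieties to be pairwise distinct. The only place needing real computation — and hence the main obstacle — is confirming that the four-element witness genuinely lies in $\mathcal S$; involutivity ($x'' \approx x$) is a quick check, but meet-commutativity requires building the table of $x \wedge y = (x \to y')'$ on all sixteen pairs and verifying its symmetry, which is the single routine-but-unavoidable calculation in the proof.
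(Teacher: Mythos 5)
Your proposal is correct and follows essentially the same route as the paper: part (a) is read off from Proposition \ref{110717_07} plus the strict inclusions, the inclusion $\mathcal{SL} \subseteq \mathcal S_{14}$ comes from Theorem \ref{mainTheorem}, the four-element algebra witnessing $\mathcal A_{11} \neq \mathcal A_{14}$ is reused to separate $\mathcal{SL}$ from $\mathcal S_{14}$, and $\mathbf{2_b}$ simultaneously gives $\mathcal S_{14} \neq \mathcal S$ and $\mathcal{BA} \not\subseteq \mathcal S_{14}$. Your explicit check that $\mathbf{2_b} \not\models (A14)$ and your flagging of the need to verify that the four-element algebra lies in $\mathcal S$ are details the paper leaves implicit, but they do not change the argument.
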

	
	\begin{proof}
		We first prove (\ref{110717_06}). 
		
		\begin{enumerate}[1.]
			\item By Theorem \ref{mainTheorem},  $\mathcal{SL} \subseteq \mathcal S_{14}$.
			
			The following algebras show that $\mathcal{SL} \not= \mathcal S_{14}$ and $\mathcal S_{14} \not= \mathcal S$, respectively.
			
\begin{tabular}{r|rrrr}
	$\to$: & 0 & 1 & 2 & 3\\
	\hline
	0 & 0 & 1 & 2 & 3 \\
	1 & 2 & 3 & 2 & 3 \\
	2 & 1 & 1 & 3 & 3 \\
	3 & 3 & 3 & 3 & 3
\end{tabular}

\begin{tabular}{r|rr}
	$\to$: & 0 & 1\\
	\hline
	0 & 1 & 1 \\
	1 & 0 & 1
\end{tabular}
			
			\item 
Since $\mathbf{2_b} \not \models (S_{14}$), it follows that $\mathcal{BA} \not \subseteq \mathcal {S}_{14}$.			
			
\end{enumerate}				

		The proof of the theorem is now complete since (\ref{110717_05}) is an immediate consequence of Proposition \ref{110717_07} and (\ref{110717_06}).

	\end{proof}
	
The Hasse diagram of the poset of subvarieties of $\mathcal S$ of associative type, together with  $\mathcal{BA}$, is:

\begin{minipage}{0.5 \textwidth}
	\setlength{\unitlength}{8mm}
	\begin{picture}(5,10)(0,0)

	\put(3,2){\circle{0.2}}
	
	\put(2,4){\circle{0.2}}
	\put(4,4){\circle{0.2}}
	
	\put(2,6){\circle{0.2}}
	
	\put(4,8){\circle{0.2}}
	
	
	
	
	\put(2,6){\line(1,1){2}}
	\put(3,2){\line(1,2){1}}
	
	\put(2,4){\line(0,1){2}}
	\put(4,4){\line(0,1){4}}

	\put(3,2){\line(-1,2){1}}

	\put(3.5,2){\makebox(0,0){$\mathcal T$}}
	
	\put(2.8,4){\makebox(0,0){$\mathcal{SL}$}}
	\put(4.8,4){\makebox(0,0){$\mathcal{BA}$}}
	
	\put(2.8,6){\makebox(0,0){$\mathcal{S}_{14}$}}
	
	\put(4.8,8){\makebox(0,0){$\mathcal S$}}
	
	
	
	
	\end{picture}
\end{minipage}\\

\vspace{1cm}

\noindent{\bf Acknowledgment:} 
The first author wants to thank the
institutional support of CONICET  (Consejo Nacional de Investigaciones Cient\'ificas y T\'ecnicas).   
\ \\

\noindent {\bf Compliance with Ethical Standards:}\\ 

\noindent {\bf Conflict of Interest:} The first author declares that he has no conflict of interest. The second author  declares that he has no conflict of interest. \\

\noindent {\bf Ethical approval:}
This article does not contain any studies with human participants or animals performed by any of the authors. \\

	\noindent {\bf Funding:}  
	The work of Juan M. Cornejo was supported by CONICET (Consejo Nacional de Investigaciones Cientificas y Tecnicas) and Universidad Nacional del Sur. 
	Hanamantagouda P. Sankappanavar did not receive any specific grant from funding agencies in the public, commercial, or not-for-profit sectors.

\small

\ \\ \ \\

\noindent {\sc Juan M. Cornejo}\\
Departamento de Matem\'atica\\
Universidad Nacional del Sur\\
Alem 1253, Bah\'ia Blanca, Argentina\\
INMABB - CONICET\\

\noindent jmcornejo@uns.edu.ar

\vskip 1.4cm

\noindent {\sc Hanamantagouda P. Sankappanavar}\\
Department of Mathematics\\
State University of New York\\
New Paltz, New York 12561\\
U.S.A.\\

\noindent sankapph@newpaltz.edu

\end{document}